\documentclass{siamart1116}


\usepackage{graphicx}
\usepackage{latexsym,amsfonts,amssymb,amsmath,theorem,subeqnarray}
\usepackage{framed} 
\usepackage{graphicx,psfrag,epsf} 
\usepackage{subfigure}
\usepackage{enumitem}
\usepackage{attachfile}

\DeclareMathOperator*{\vol}{Vol}

\DeclareMathOperator*{\co}{co}
\DeclareMathOperator*{\cl}{cl}

\DeclareMathOperator{\inte}{int}
\DeclareMathOperator{\proj}{\mathcal{P}}
\DeclareMathOperator{\ri}{\text{ri}}
\newcommand{\set}{\mathcal}

\newtheorem{assumption}{Assumption}
\newsiamremark{remark}{Remark}

\usepackage{lipsum}
\usepackage{amsfonts}
\usepackage{graphicx}
\usepackage{epstopdf}
\usepackage{algorithmic}
\ifpdf
  \DeclareGraphicsExtensions{.eps,.pdf,.png,.jpg}
\else
  \DeclareGraphicsExtensions{.eps}
\fi

\numberwithin{theorem}{section}

\newcommand{\TheTitle}{A Fast Gradient and Function Sampling Method for Finite Max-Functions} 
\newcommand{\TheAuthors}{Elias Salom\~{a}o Helou, Sandra S. Santos, and Lucas E. A. Sim\~{o}es}

\headers{A Fast Gradient and Function Sampling Method}{\TheAuthors}

\title{{\TheTitle}\thanks{Submitted to the editors DATE.
\funding{This work was supported by Brazilian Funding Agencies \emph{Funda\c{c}\~ao de Amparo \`a Pesquisa do Estado de S\~ao Paulo} - FAPESP (grants 2013/07375-0, 2013/05475-7, 2013/07375-0, 2013/14615-7, 2016/22989-2 and 2016/24286-9), \emph{Conselho Nacional de Desenvolvimento Cient\'{i}fico e Tecnol\'{o}gico} - CNPq (grants 311476/2014-7 and 302915/2016-8) and PRONEX Optimization.}}}

\author{
  Elias Salom\~{a}o Helou\thanks{Institute of Mathematical Sciences and Computation,
University of S\~{a}o Paulo. S\~{a}o Carlos - SP, Brazil.
    (\email{elias@icmc.usp.br}).}
  \and
  Sandra A. Santos\thanks{Department of Applied Mathematics, University of Campinas. 
              Campinas - SP, Brazil.\quad\quad\quad\quad(\email{sandra@ime.unicamp.br},
    \email{simoes.lea@gmail.com}).}
  \and
  Lucas E. A. Sim\~{o}es\footnotemark[3]
}

\usepackage{amsopn}


\ifpdf
\hypersetup{
  pdftitle={\TheTitle},
  pdfauthor={\TheAuthors}
}
\fi


\externaldocument{ex_supplement}


\begin{document}

\maketitle

\begin{abstract}
  This paper tackles the unconstrained minimization of a class of nonsmooth and nonconvex functions that can be written as finite max-functions. A gradient and function-based sampling method is proposed which, under special circumstances, either moves superlinearly to a minimizer of the problem of interest or superlinearly improves the optimality certificate. Global and local convergence analysis are presented, as well as illustrative examples that corroborate and elucidate the obtained theoretical results.
\end{abstract}

\begin{keywords}
  nonsmooth nonconvex optimization, gradient sampling, local superlinear convergence, global convergence, unconstrained minimization
\end{keywords}

\begin{AMS}
  65K10, 90C26
\end{AMS}

\section{Introduction}  

Problems involving continuous nonsmooth functions arise in many fields of science~\cite{MAY09,PJS10,WAC09}, playing a primary or a secondary role (e.g. subproblems) in different areas. A wide class of problems needs to cope with one or more minimizations of convex nonsmooth functions~\cite{MOP14,OKZ13}, which has been successfully solved by well established optimization algorithms known as \textit{Bundle Methods}~\cite{BAW75,KWL85,MAK02}. However, a significant amount of problems involve minimizations of nonsmooth functions that are also nonconvex~\cite{DOA12,DSD09}, a property that usually introduces an undesirable complexity to the implementation of the aforementioned method. Nevertheless, we can also find algorithms based on bundle ideas~\cite{GGM09,KWL96} for such functions.

Recently, an algorithm known as \textit{Gradient Sampling} (GS)~\cite{BLO05,KWL07} has gained attention for providing good alternatives to the difficulties that the Bundle Methods need to deal with if the function is not convex~(see \cite{MAK02,OLS14} and references therein). Basically, the functioning of GS is very close to the steepest descent method for smooth functions, since it works in every iteration with a descent direction computed just with first order information and it finds the next iterate by a line search procedure (in fact, when a nonnormalized version of GS is used to solve a smooth optimization problem, its step asymptotically recovers the direction taken by the steepest descent method). In contrast to the Bundle Method, the GS does not work with a memory of the past iterations, but it tries to gain information about the function by computing gradients at some sampled points obtained in each iteration. This behavior is less complex than keeping a history of the last iterations, since in the nonconvex case, it is hard to determine whether a past iteration is contributing to construct a good model of the objective function or it is so far from the current iteration that its incorporation to the model might lead to an erroneous information. As a counterpart, by evaluating the gradients at the sampled points, the GS has a significant cost per iteration.

Since we can interpret the GS algorithm as a generalization of the steepest descent method, it is reasonable to think that, in the best-case scenario, the method would have linear local convergence~\cite{HSS17}. Therefore, this leads to a natural question: would it be possible to have a GS algorithm that can be understood as a generalization of Newton's (or quasi-Newton) method for nonsmooth functions, meaning that it would locally converge faster than linearly?

This manuscript has the intent to start answering this question. As we shall see, the answer is, at least, partially affirmative. In fact, there are recent studies that have introduced GS-like algorithms with quasi-Newton techniques~\cite{CUO12,CUQ13,CUQ15}, however there are no proofs nor numerical results that corroborate a rapid local convergence. Therefore, our affirmative answer is directly linked to the property that, in a good sampling condition and, for a special class of nonsmooth functions, the method will move superlinearly in some sense.

One might view our method as a GS algorithm that incorporates some elements of Bundle Methods developed over the years~\cite{GRM98,LUV98}, but still keeps the GS facilities to handle nonconvex functions. This last characteristic is in agreement with Kiwiel's expectation~\cite{KWL07}\\
\begin{quote}
``We believe, however, that deeper understanding of their [GS and Bundle Methods] similarities and differences
should lead to new variants."\\
\end{quote}

In order to prove a rapid local convergence result, the theory developed in this manuscript is based on the $\set V\set U$-decomposition of the space~\cite{LEW02,MIS99}. However, the method does not need to compute an estimate of such spaces. Roughly speaking, we show that our trust-region algorithm emulates the quasi-Newton techniques into the $\set U$-space (a subspace where the objective function is locally smooth), whereas it combines effective cutting-plane features~\cite{FGG04,KEL60} into the $\set V$-space (the orthogonal complement of the $\set U$-space). For this purpose, we need not only to evaluate the gradients at the sample points, but also their respective function values. This procedure does not produce a significant increase in computational time, since, in most cases, the computational effort of evaluating the function value is fundamental in evaluating the gradient as well, so, by computing the gradient, one can obtain the function value essentially for free.

As a consequence of our attempt to move superlinearly to the solution of the optimization problem, the iterations of the proposed algorithm are more expensive when compared to the GS method. Therefore, although the global convergence of our algorithm is proven, the method should be viewed as an accelerator of the local convergence speed of the GS algorithm. Consequently, we propose that a potential user should use the GS method in the first iterations and switch to our algorithm in the final iterations. Fortunately, by the way the GS was designed, this transition moment can be well determined.

Finally, we believe that the results obtained in this text are a step further into the study of a practical algorithm with rapid local convergence to minimize nonsmooth and nonconvex functions (important studies on the matter for nonsmooth and convex functions can be found in~\cite{LEM82,LOS00,LES97,MIS05}). The pursuit for such an algorithm has raised many researchers' efforts (an enlightening review can be found in~\cite{MIS12}) and up to our knowledge there is no method in the literature that fulfills those features. A future work assessing its performance in an extensive class of nonsmooth functions is needed to determine how efficient the proposed algorithm is. For now, we limit ourselves to the global and local convergence theory and the presentation of some illustrative examples.   

For clarity, before we start to expose the main ideas of this study, we present some notations that appear along this manuscript:
\begin{itemize}
\item{$\co \set{X}$ is the convex hull of $\set{X}$;}
\item{$\cl \set{X}$ is the closure of $\set{X}$;}
\item{$\inte \set{X}$ is the interior of $\set{X}$;}
\item{$|\set{X}|$ is the cardinality of $\set{X}$;}
\item{$\set{B}(x,r)$ is the Euclidean closed ball with center at $x$ and radius $r$;}
\item{$\|\cdot\|$ is the Euclidean norm in $\mathbb{R}^n$;}
\item{$\|x\|_{H} := \sqrt{x^THx}$, for any symmetric positive definite matrix $H$;}
\item{$e$ is a vector with ones in all entries;}
\item{$\mathcal{P}[x\in \set X]$ is the probability of $x$ to be in $\set X$, whereas $\mathcal{P}[x\in \set X~|~x\in \set Y]$ is the conditional probability of $x$ to be in $\set X$ given that $x\in \set Y$.} 
\end{itemize}

\section{Basic concepts and the GS algorithm}\label{sec:basic}

The GS method has the goal of solving the following unconstrained optimization problem
\begin{equation}\label{eq:main-prob}
\min_{x\in \mathbb{R}^n} f(x)\text,
\end{equation}
where $f:\mathbb{R}^n\rightarrow \mathbb{R}$ is a locally Lipschitz function, continuously differentiable in an open dense subset with full measure $\set D\subset \mathbb{R}^n$. The function $f$ is not necessarily convex.

For a map with the properties above, it is possible to define the Clarke's subdifferential set for $f$ at $x$~\cite{CRK90,CRK98}. This set can be interpreted as a generalization of the gradient for nonsmooth functions.
\begin{definition}[Subdifferential set, subgradient, stationary point]
The set given by
\begin{equation*}
\overline{\partial}f(x) := \co\left\{\lim_{j\rightarrow \infty} \nabla f(x_j)~\displaystyle |~ x_j\rightarrow x, x_j\in \set D\right\}
\end{equation*}
is called the Clarke's subdifferential set of $f$ at $x$ and any $v\in \overline{\partial}f(x)$ is known as a subgradient of $f$ at $x$. Moreover, if $0\in \overline{\partial}f(x)$, then we say that $x$ is a stationary point for $f$.
\end{definition}
A set that fits best with the idea of sampled points and is more general than the previous one can be defined~\cite{GOL77}.
\begin{definition}[$\epsilon$-Subdifferential set, $\epsilon$-subgradient, $\epsilon$-stationary point] The $\epsilon$-subdifferential set of~$f$~at $x$ is given by
\begin{equation*}
\overline{\partial}_\epsilon f(x) := \text{co~} \overline{\partial} f(\set B(x,\epsilon))\text.
\end{equation*}
Any $v\in \overline{\partial}_\epsilon f(x)$ is known as an $\epsilon$-subgradient of $f$ at $x$. Moreover, if $0\in \overline{\partial}_\epsilon f(x)$, then we say that $x$ is an $\epsilon$-stationary point for $f$.
\end{definition}
With a great importance for our study, we present the generalized directional derivative for the function $f$~\cite{CRK90}.
\begin{definition}[Generalized directional derivative]\label{def:gdd}
The generalized directional derivative of a continuous locally Lipschitz function $f:\mathbb{R}^n\rightarrow \mathbb{R}$ at $x$ in the direction $v\in \mathbb{R}^n$ is given by 
\begin{equation*}
f^\circ(x;v):= \underset{t\downarrow 0}{\limsup_{y\rightarrow x}}~\frac{f(y+tv)-f(y)}{t}\text.
\end{equation*} 
\end{definition}

Finally, it is possible to link Definition~\ref{def:gdd} with the subdifferential set. Indeed, the following relation holds~\cite{CRK90}
\begin{equation*}
f^\circ(x;v) = \max\{s^Tv~|~s\in \overline{\partial} f(x)\}\text.
\end{equation*}
With the above sets in mind, one can interpret the sampled points used in GS method as an attempt to approximate the $\epsilon$-subdifferential set of $f$ at $x$~\cite[Theorem 2.1] {BLO02a}. 

For a more complete idea of the GS functioning, we present the nonnormalized version of the GS algorithm~\cite{KWL07}.

\begin{algorithm}
\caption{Nonnormalized version of the GS method.}
\begin{itemize}[leftmargin=1.4cm]
\item[\textbf{Step 0.}]{Given $x_0\in \set D$, $m \in \mathbb{N}$ with $m\geq n+1$, fixed real numbers $0\leq \nu_{\text{opt}} < \nu_0$, $0\leq \epsilon_{\text{opt}} < \epsilon_0$ and $0<\theta_\nu,\theta_\epsilon,\gamma,\beta<1$, set $k=0$.}
\item[\textbf{Step 1.}]{Choose $\left\{x_{k,1},\hdots,x_{k,m}\right\}\subset \set B(x_{k},\epsilon_k)$ with randomly, independently and\\ uniformly sampled elements. If $\left\{x_{k,1},\hdots,x_{k,m}\right\} \not\subset \set D$, then STOP!}
\item[\textbf{Step 2.}]{Set $G_k = [\nabla f(x_{k}),~\nabla f(x_{k,1}),\hdots,~\nabla f(x_{k,m})]$ and find $g_k = G_k\lambda^k$, where $\lambda^k$\\ solves 
\begin{equation*}
\begin{split}
\min_\lambda  ~~& \frac{1}{2} \lambda^TG_k^TG_k\lambda \\
\text{s.t.}          ~~& e^T\lambda = 1\text,~ \lambda \geq 0\text.
\end{split}
\end{equation*} 
\\}
\item[\textbf{Step 3.}]{If $\|g_k\| \leq \nu_{\text{opt}}$ and $\epsilon_k \leq \epsilon_{\text{opt}}$, then terminate. Otherwise, if $\|g_k\| \leq \nu_k$, then \\ $\epsilon_{k+1} = \theta_\epsilon\epsilon_k$, $\nu_{k+1} = \theta_\nu\nu_k$, $t_k = 0$, $x_{k+1} = x_k$ and go to Step 6. }
\item[\textbf{Step 4.}]{Do a backtracking line search and find the maximum $t_k\in \{1,\gamma,\gamma^2,\hdots\}$ \\ such that
$$ f( x_k + t_kd_k ) < f( x_k ) - \beta t_k \|g_k\|^2 \text{,~~where~}d_k = -g_k\text,$$
and set $\epsilon_{k+1} = \epsilon_k$ and $\nu_{k+1} = \nu_k$.\\}
\item[\textbf{Step 5.}]{If $x_k + t_kd_k \in \set D$, then set $x_{k+1} = x_k + t_kd_k$. Otherwise, find 
\begin{equation*}
x_{k+1}\in \set B(x_k~+~t_kd_k,\min\{t_k,\epsilon_k\}\|d_k\|)\cap \set D\text, 
\end{equation*}
such that $f( x_{k+1} ) < f( x_k ) - \beta t_k\|g_k\|^2 $.}
\item[\textbf{Step 6.}]{Set $k\leftarrow k+1$ and go back to Step 1.}
\end{itemize}
\label{model-alg}
\end{algorithm}

Since the sampled points are chosen in an uniform and independent way, one can show that the GS method, with probability one, will never stop due to Step 1. Moreover, it is possible to show that if $x_k\in \set D$, then the vector $d_k$ used at Step 4 is a descent direction for $f$ at $x_k$~\cite{BLO05}, which evinces the importance of Step~5 for the finiteness of the line search procedure (in fact, this procedure is a delicate matter~\cite{HSS16}). Moreover, given the random nature of the method, nondeterministic results of convergence with probability one are expected~\cite{KWL07}.

Once we have presented some basic notions about nonsmooth functions and the GS methods, we are able to proceed with the main ideas of this paper. 

\section{Motivation and the new algorithm}

Henceforward, we will be interested in solving a class of problems more structured than~\eqref{eq:main-prob}. Let us consider the minimax optimization problem 
\begin{equation}\label{eq:minimax-prob}
\min_{x\in \mathbb{R}^n} \left(f(x):=\max_{1\leq i \leq p}\{\phi_i(x)\}\right)\text,
\end{equation}
where the functions $\phi_i:\mathbb{R}^n\rightarrow \mathbb{R}$ are all of class $C^2$, but they are not necessarily known. Here, we only ask that the function $f$ may be represented as a maximum of functions, i.e., the functions $\phi_i$ are not inputs for the method. This situation is distinct from the case in which the functions that comprise $f$ are known. For such a case, many studies have been developed (see~\cite{DUP13} and references therein).

\subsection{Motivational example} Suppose we have $f(x) = |x| = \max\{x,-x\}$ and we want to start an iteration of Algorithm~\ref{model-alg}. If
\begin{equation*}
m = 2 ,~~\epsilon_0 = 1,~~\epsilon_{\text{opt}}<1,~~x_0 = 0.5,~~x_{0,1} < 0\text{~~and~~}x_{0,2} > 0\text,
\end{equation*}
then $f'(x_{0,1}) = -1$, $f'(x_{0,2}) = 1$ and $g_0 = 0$ in Step 2. Consequently, by Step~3, we skip Steps~4 and 5 and go directly to Step 6, which starts a new iteration. Although this routine indicates that we have an $\epsilon_0$-stationary point for $f$, this procedure does not allow us to move. Moreover, it prevents the algorithm to take an action when it has a complete information about the function, that is, when we have points sampled in the sets 
\begin{equation*}
X^- = \{x\in \mathbb{R}~|~x < 0\}\text{~~and~~}X^+ = \{x\in \mathbb{R}~|~x > 0\}\text. 
\end{equation*}
As a consequence, we see that the method only gets a chance to move when either $x_k$ and the sampled points are all in $X^-$ or all in $X^+$. Moreover, in this scenario, the GS method behaves exactly as the steepest descent method.

This undesirable behavior can be explained by the lack of information about the function values at the sampled points. Indeed, taking a careful look into the quadratic optimization problem that is solved in Step 2, it is possible to see that its dual problem is given by
\begin{equation*}
\begin{split}
\min_{d,z}  ~~& z + \frac{1}{2} d^Td \\
\text{s.t.}          ~~& G_k^Td \leq ze\text,
\end{split}
\end{equation*} 
where $z\in \mathbb{R}$ and $d\in \mathbb{R}^n$. Equivalently, considering $x_{k,0}:= x_k$, the same direction $d_k$ can be obtained if we solve  
\begin{equation}\label{eq:qp-primal}
\min_{d \in \mathbb{R}^n} \max_{0\leq i \leq m} \left\{f(x_{k}) + \nabla f(x_{k,i})^Td + \frac{1}{2} d^Td\right\}\text.
\end{equation} 
Notice, however, that if we use the function values of each sampled point instead of $f(x_k)$ and an enriched second-order information (other GS-like methods use different second-order approaches~\cite{CUO12,CUQ13}), i.e., if we solve
\begin{equation}\label{eq:qp-max-max}
\min_{d \in \mathbb{R}^n} \max_{1\leq i \leq m} \left\{f(x_{k,i}) + \nabla f(x_{k,i})^T(x_k + d - x_{k,i}) + \frac{1}{2} d^TH_kd\right\}\text,
\end{equation}
we would have a better model for the function $f$ than the original one (closer to a cutting-plane method). Furthermore, the new quadratic optimization problem allows us to move when we have sampled in both ``faces" of $f$, that is, in $X^-$ and $X^+$. Lastly, observe that in~\eqref{eq:qp-max-max}, we do not use the objective function value at the current iterate $x_k$ neither the gradient $\nabla f(x_k)$. As we shall see later, these omissions do not prevent the algorithm to converge and introduce an advantage over the GS method, since the differentiability check inside Step 5 is no longer necessary.

Unfortunately, this new quadratic programming problem comes at a price: the vector $d_k$ might not be a descent direction for $f$ at $x_k$ (especially under a bad sampling condition), a property that is always true if we solve~\eqref{eq:qp-primal}. Therefore, to have an algorithm that uses the function values at all sampled points, we must overcome this issue.

\subsection{New algorithm} In order to surpass the difficulty of not having a descent direction under a bad sampling, we replace the Armijo's line search by a trust-region procedure. Besides, aiming at a smooth problem, instead of dealing with~\eqref{eq:qp-max-max}, we solve at each iteration the following quadratic optimization problem
\begin{equation}\label{eq:qp-max}
\begin{split}
\min_{d,z}  ~~& z + \frac{1}{2} d^TH_kd \\
\text{s.t.}          ~~& \tilde{f}_k + G_k^Td \leq ze\\
              ~~& \|d\|_\infty \leq \Delta_k\text,
\end{split}
\end{equation} 
where $\tilde{f}_k = [f(x_{k,1}) + \nabla f(x_{k,1})^T(x_k - x_{k,1}),\hdots,f(x_{k,m})+ \nabla f(x_{k,m})^T(x_k - x_{k,m})]^T$, $G_k = [\nabla f(x_{k,1})~\hdots \nabla f(x_{k,m})]$ and $\|d\|_\infty \leq \Delta_k$ stands for the trust-region constraints, for some $\Delta_k > 0$. Consequently, its dual optimization problem, after a changing of variables, can be viewed as 
\begin{equation*}\label{qp-max-dual}
\begin{split}
\max_{\lambda,\omega}~~ & \lambda^T\tilde{f}_k - \frac{1}{2}(G_k\lambda + \omega)^TH_k^{-1}(G_k\lambda + \omega) - \Delta_k\|\omega\|_1\\
\text{s.t.~~} & \lambda^Te = 1 \\
             & \lambda \geq 0\text,
\end{split}
\end{equation*}
where $\lambda\in \mathbb{R}^m$ and $\omega\in \mathbb{R}^n$ are the dual variables. With these modifications in mind, we introduce the proposed algorithm (Algorithm~\ref{vugs-alg}), also referred as GraFuS, which stands for Gradient and Function Sampling. Together with the exhibition of our new method, we must highlight that the generated sequence of function values might not be monotone decreasing (the reason for this choice will be better explained in the local convergence subsection). Additionally, with the same argument used for the GS method, GraFuS, with probability one, will never stop at Step 1.

\begin{algorithm}[h!]
\caption{Gradient and Function Sampling-based method (GraFuS).}
\begin{itemize}[leftmargin=1.4cm]
\item[\textbf{Step 0.}]{Set $k,l = 0$, $x_0\in \mathbb{R}^n$, $1\leq\sigma_0\leq 2$, $m \in \mathbb{N}$ with $m\geq n+1$ and fixed real \\ numbers $\gamma_\epsilon > 0, \gamma_\Delta  > 0$, $0 < \nu_0,\theta,\rho,\delta < 1$, $0 \leq \nu_\text{opt} < \nu_0$ and $\varrho > 1$. Define \\ the initial sampling radius as $\epsilon_{0,0} = \gamma_\epsilon\nu_0$ and the maximum step size as \\ $\Delta_{0,0} = \gamma_\Delta\nu_0$.}
\item[\textbf{Step 1.}]{Choose $$\left\{x_{k,1}^l,\hdots,x_{k,m}^l\right\}\subset \set B\left(x_{k},(\epsilon_{k,l})^{\sigma_k}\right)$$ with randomly, independently and uniformly sampled elements. \\ If $\left\{x_{k,1}^l,\hdots,x_{k,m}^l\right\} \not\subset \set D$, then STOP! Otherwise, set $\tilde{f}_{k,l}\in \mathbb{R}^m$ with
\begin{equation*}
\left(\tilde{f}_{k,l}\right)_j = f(x_{k,j}^l) + \nabla f(x_{k,j}^l)^T(x_k - x_{k,j}^l)\text{,}~1\leq j\leq m\text,  
\end{equation*}
and
\begin{equation*}
G_{k,l} = [\nabla f(x_{k,1}^l),\hdots,\nabla f(x_{k,m}^l)]\text.
\end{equation*}
}
\item[\textbf{Step 2.}]{Find $(d_{k,l},z_{k,l})$ and $(\lambda_{k,l},\omega_{k,l})$ that solve, respectively, \eqref{eq:qp-max} and its dual\\ problem, where $H_k\in \mathbb{R}^{n\times n}$ is a symmetric and positive definite matrix.\\}
\item[\textbf{Step 3.}]{If $\|H_k^{-1}G_{k,l}\lambda_{k,l}\| \geq \nu_k$ and $\Delta_{k,l} < +\infty$, then proceed to the next step. \\Otherwise,
\begin{equation*}
\begin{array}{ll}
\bullet \text{ if } \|d_{k,l}\|_\infty < \Delta_{k,l}: & \text{choose } \sigma_{k+1}\in [1,2] \text{, set the optimality certificate}\\
&\nu_{k+1} = \min\{\max\{\|H_k^{-1}G_{k,l}\lambda_{k,l}\|,(\nu_k)^\varrho\},\delta\nu_k\} \text{ and }\\
&\text{go to Step 6.}\\
\bullet \text{ if } \|d_{k,l}\|_\infty = \Delta_{k,l}: & \text{set } \Delta_{k,l+1} = +\infty\text,~G_{k,l+1} = G_{k,l}\text,~ \tilde{f}_{k,l+1} = \tilde{f}_{k,l}\text, \\
&l\leftarrow l+1\text{ and go back to Step 2.}
\end{array}
\end{equation*}
}
\item[\textbf{Step 4.}]{
Compute
\begin{equation*}
\text{Ared}_{k,l} := f(x_k) - f(x_k+d_{k,l})
\end{equation*}
and\vspace{0.3cm}\\
$\text{Pred}_{k,l} := \displaystyle\max_i \left\{f(x_{k,i}^l)+\nabla f(x_{k,i}^l)^T(x_k - x_{k,i}^l)\right\} - \left(z_{k,l} + \frac{1}{2}d_{k,l}^TH_kd_{k,l}\right)\text.$
\vspace{0.3cm}\\}
\item[\textbf{Step 5.}]{If $\text{Ared}_{k,l} \leq \rho\text{Pred}_{k,l}$, then set $\Delta_{k,l+1} = \theta\Delta_{k,l}$,  $\epsilon_{k,l+1} = \theta\epsilon_{k,l}$, \\$l \leftarrow l + 1$ and go back to Step 1. Otherwise, set $\nu_{k+1} = \nu_k$ and $\sigma_{k+1} = \sigma_k$.\\}
\item[\textbf{Step 6.}]{If $\nu_{k+1} < \nu_\text{opt}$, then terminate. Otherwise, set $x_{k+1} = x_k + d_{k,l}$, \\ $\epsilon_{k+1,0}  = \gamma_\epsilon\nu_{k+1}$, $\Delta_{k+1,0} = \gamma_\Delta\nu_{k+1}$, $k\leftarrow k+1$, $l \leftarrow 0$ and go back to Step 1.}
\end{itemize}
\label{vugs-alg}
\end{algorithm}

In order to guarantee the global convergence of the method, we suppose, from now on, the following assumption.
\begin{assumption}\label{assump1}
For every $k\in \mathbb{N}$, the matrix $H_k\in \mathbb{R}^{n\times n}$ is symmetric positive definite and there exist positive real numbers $\underline{\varsigma}$ and $\overline{\varsigma}$ such that
\begin{equation*}
\underline{\varsigma}\|d\|^2 \leq d^TH_kd \leq \overline{\varsigma}\|d\|^2\text,~~\text{for all}~ d\in\mathbb{R}^n\text.
\end{equation*}
\end{assumption}

\begin{footnotesize}
\begin{center}
\begin{tabular}{|ll|}
\hline
\multicolumn{2}{|c|}{\textbf{Glossary of Notation}} \\ \hline
$k$: outer iteration counter                             & $\nu_k$: optimality certificate               \\
$l$: inner iteration counter                             & $\nu_\text{opt}$: optimality certificate tolerance  \\
$x_k$: current iterate                                   & $\varrho$ and $\delta$: constants for updating $\nu_k$\\
$m$: number of sampled points                            & $\epsilon_{k,l}$: related to the current sampling size \\
$\gamma_\Delta$: constant related to the trust region   & $\Delta_{k,l}$: current trust-region size            \\
$\gamma_\epsilon$: constant related to the sampling size  & $\theta$: reduction factor for $\epsilon_{k,l}$ and $\Delta_{k,l}$ \\
$\rho$: parameter of step acceptance              & $\sigma_k$: power related to the sampling size \\
\hline
\end{tabular}
\end{center}
\end{footnotesize}


The updating procedure of the matrices $H_k$ is a delicate matter, since a bad sampling at one single iteration might damage some required properties for the convergence theory. For that reason, we give a detailed explanation of how one may update $H_k$ properly in Subsection~\ref{subsec:Hk_update}.

\section{Convergence} Before we proceed with the convergence analysis, we should state a property for the functions $\phi_i$ that define $f$. It is a common assumption when we are dealing with nonsmooth functions of the kind defined in~\eqref{eq:minimax-prob}, cf.~\cite{DSS09,MIS99}. Considering that 
\begin{equation*}
\set I(x) := \{i~|~ \phi_i(x) = f(x)\}\text,
\end{equation*}
the required hypothesis follows.
\begin{assumption}\label{assump:affine}
For all $x\in \mathbb{R}^n$ with $|\set I(x)| \geq 2$, the gradients $\{\nabla \phi_i(x)\}_{i\in \set I(x)}$ compose an affinely independent set, that is, 
\begin{equation*}
\sum_{i\in \set I(x)} \alpha_i \nabla \phi_i(x) = 0\text{~~and~~} \sum_{i\in \set I(x)} \alpha_i = 0~~\Longleftrightarrow~~
\alpha_i = 0\text,~~\text{for all}~ i\in \set I(x)\text.
\end{equation*}  
\end{assumption}

\begin{remark}\label{remark:1}
It is worth pointing out that Assumption~\ref{assump:affine} can be viewed as a way to guarantee that, for any fixed $j\in \set I(x)$, the set 
\begin{equation*}
\{\nabla\phi_i(x) - \nabla\phi_j(x)\}_{{i\in \set I(x)\setminus\{j\}}}  
\end{equation*}
is linearly independent for all $x\in \mathbb{R}^n$ with $|\set I(x)| \geq 2$ (the proof is provided in Lemma~\ref{lemma:enough-points} below). This association will be of great importance for both the global and the local convergence results. 

Additionally, if $x_*$ is a local minimizer for $f$, Assumption~\ref{assump:affine} also gives us that there exists only one possible convex combination of the gradients $\nabla\phi_i(x_*)$, with $i\in \set I(x_*)$, that generates the null vector.
\end{remark}

\subsection{Global convergence}
First, we present a technical lemma guaranteeing that at most $n+1$ functions will assume the maximum of $f$ at a fixed point $x\in \mathbb{R}^n$. In addition, we prove that, for each $\phi_j$, with $j\in \set I(x)$, there is a sufficiently small open set such that $\phi_j$ strictly assumes the maximum value at this specific set.
\begin{lemma}\label{lemma:enough-points}
Under Assumption~\ref{assump:affine}, let $x$ be any point in $\mathbb{R}^n$ and $j$ be any fixed index in $\set I(x)$. Then, $|\set I(x)| \leq n+1$. Moreover, there exists $\overline\epsilon > 0$ such that for all $\epsilon \in (0,\overline\epsilon)$, we can find a set $\set C_j(x,\epsilon)\subset \set B(x,\epsilon)$ with $\inte (\set C_j(x,\epsilon)) \neq \emptyset $, for which $x\notin \set C_j(x,\epsilon)$ and
\begin{equation*}
\phi_j(x^j) > \underset{i\neq j}{\max_{1\leq i\leq p}}~\phi_i(x^j)\text,~~\text{for all~} x^j \in \set C_j(x,\epsilon)\text. 
\end{equation*} 
\end{lemma}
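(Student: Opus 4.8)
The plan is to prove the two assertions of the lemma separately. The first assertion, $|\mathcal{I}(x)| \leq n+1$, should follow directly from Assumption~\ref{assump:affine}: the gradients $\{\nabla\phi_i(x)\}_{i \in \mathcal{I}(x)}$ form an affinely independent set in $\mathbb{R}^n$, and a standard fact from convex/affine geometry states that any affinely independent set in $\mathbb{R}^n$ has at most $n+1$ elements. **First I would** also dispatch the parenthetical claim from Remark~\ref{remark:1} (which the lemma is stated to prove): fixing $j \in \mathcal{I}(x)$, I would show the affine independence of the gradients is equivalent to the linear independence of the difference set $\{\nabla\phi_i(x) - \nabla\phi_j(x)\}_{i \in \mathcal{I}(x)\setminus\{j\}}$. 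This is the routine linear-algebra translation between affine and linear independence: a dependence $\sum_{i \neq j} \beta_i(\nabla\phi_i(x) - \nabla\phi_j(x)) = 0$ rearranges into a vanishing affine combination of the original gradients (with coefficients summing to zero), so Assumption~\ref{assump:affine} forces all $\beta_i = 0$.

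**Next I would** turn to the construction of the set $\mathcal{C}_j(x,\epsilon)$. The key idea is that the $\phi_i$ are $C^2$ (hence continuous), so on a small ball $\mathcal{B}(x,\overline\epsilon)$ the only functions that can attain or approach the maximum of $f$ are those in $\mathcal{I}(x)$: for $i \notin \mathcal{I}(x)$ we have $\phi_i(x) < f(x)$, and by continuity there is $\overline\epsilon>0$ small enough that $\phi_i(y) < \phi_j(y)$ for all $y\in\mathcal{B}(x,\overline\epsilon)$ and all such $i$, uniformly. **The heart of the matter** is then to exhibit, inside $\mathcal{B}(x,\epsilon)$, a set with nonempty interior on which $\phi_j$ strictly dominates the other $\phi_i$ with $i\in\mathcal{I}(x)\setminus\{j\}$. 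To do this I would use a first-order expansion: for $i\in\mathcal{I}(x)$, $\phi_i(x+d) - \phi_j(x+d) = (\nabla\phi_i(x) - \nabla\phi_j(x))^T d + o(\|d\|)$, since $\phi_i(x) = \phi_j(x) = f(x)$. I want to find a direction $d$ making every such difference strictly negative. Because the difference vectors $\{\nabla\phi_i(x) - \nabla\phi_j(x)\}_{i\in\mathcal{I}(x)\setminus\{j\}}$ are linearly independent (the Remark claim just established), the open polyhedral cone
\begin{equation*}
\mathcal{K} := \{\, d \in \mathbb{R}^n \mid (\nabla\phi_i(x) - \nabla\phi_j(x))^T d < 0 \text{ for all } i\in\mathcal{I}(x)\setminus\{j\} \,\}
\end{equation*}
is nonempty and open. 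I would then take a direction $d_0\in\mathcal{K}$ and define $\mathcal{C}_j(x,\epsilon)$ as a small open (truncated) subcone around $x + t d_0$ for suitably small $t$, scaled to lie in $\mathcal{B}(x,\epsilon)$ and to exclude $x$; on this set the linear terms dominate the $o(\|d\|)$ remainders, so $\phi_j$ strictly exceeds every other $\phi_i$ (both those in $\mathcal{I}(x)\setminus\{j\}$ via the cone condition, and those outside $\mathcal{I}(x)$ via the uniform continuity gap).

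**The main obstacle** I anticipate is making the perturbation argument fully rigorous while keeping $\mathcal{C}_j(x,\epsilon)$ inside $\mathcal{B}(x,\epsilon)$ with nonempty interior and away from $x$, uniformly as $\epsilon$ shrinks. The subtlety is that as $\epsilon\downarrow 0$ the $o(\|d\|)$ error must be controlled relative to the linear gap; the correct approach is to work with a fixed direction $d_0$ in the open cone, set $d = t d_0$ with $t\in(0,\epsilon/\|d_0\|)$, and observe that along this ray each difference behaves like $t\,(\nabla\phi_i(x)-\nabla\phi_j(x))^T d_0 + o(t)$, where the leading coefficient is a strictly negative constant independent of $t$. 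Shrinking $\overline\epsilon$ guarantees the remainder is dominated for all relevant $t$, and taking a small open ball of directions around $d_0$ (still inside $\mathcal{K}$) yields the required nonempty interior. Choosing $\mathcal{C}_j(x,\epsilon)$ as the image of such a neighborhood under $d\mapsto x+d$, intersected with $\mathcal{B}(x,\epsilon)\setminus\{x\}$, completes the construction.
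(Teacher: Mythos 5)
Your proposal is correct and follows essentially the same route as the paper: both parts rest on converting the affine independence of $\{\nabla\phi_i(x)\}_{i\in\mathcal{I}(x)}$ into linear independence of the gradient differences, then choosing a direction along which $\phi_j$ strictly dominates the other active $\phi_i$ to first order, thickening it into a truncated open cone inside $\mathcal{B}(x,\epsilon)$ via the $C^2$ Taylor remainder, and handling the indices outside $\mathcal{I}(x)$ by continuity. The only cosmetic difference is that the paper exhibits the good direction explicitly (the normalized residual of $v_j$ after projecting onto the span of the other differences, which yields the computable perturbation radius $\delta(x)$), whereas you establish nonemptiness of the open cone of good directions abstractly through surjectivity of the associated linear map.
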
 
\begin{proof}First, let us prove that $|\set I(x)| \leq n+1$. If $|\set I(x)| = 1$, the statement trivially holds. Therefore, we assume that $|\set I(x)| \geq 2$. Besides, we suppose without any loss of generality that $\set I(x) = \{1,\hdots,r\}$. Then, let $\alpha_2,\hdots,\alpha_{r}\in \mathbb{R}$ be any real numbers such that
\begin{equation*}
\sum_{i=2}^{r} \alpha_i \left (\nabla \phi_i(x) - \nabla \phi_1(x) \right) = 0\text. 
\end{equation*}
Then, it follows that
\begin{equation*}
-\left(\sum_{i=2}^{r}\alpha_i\right) \nabla \phi_1(x) + \sum_{i=2}^{r}\alpha_i\nabla \phi_i(x) = 0\text,  
\end{equation*}
and, by Assumption~\ref{assump:affine}, we have $\alpha_2=\hdots=\alpha_{r}=0$. Consequently,
\begin{equation*}
\set A:=\{\nabla \phi_i(x) - \nabla \phi_1(x)\}_{i\in \set I(x)\setminus{\{1\}}}
\end{equation*}  
forms a linearly independent set. So, $|\set A|\leq n$, which implies that $|\set I(x)|\leq n+1$.

Now, for the other result, we also have that, if $|\set I(x)| = 1$, then the proof is straightforward by a continuity argument. So, let us suppose that $|\set I(x)| \geq 2$ and $\set I(x) = \{1,\hdots,r\}$. By Assumption~\ref{assump:affine}, given a fixed $s\in \set I(x)$ and any $j\in \set I(x)$ with $j\neq s$, we have that $v_j := \nabla \phi_j(x) - \nabla \phi_s(x)$ cannot be written as a linear combination of $\{ v_i~|~ i\in \set I(x)\text,~i\neq j\}$ (to see this, just use the same arguments that we have used to prove $|\set I(x)| \leq n+1$ and notice that the set formed by the vectors $v_j$'s is linearly independent). Thus, it is possible to find a unitary $d_j\in \mathbb{R}^n$ such that $v_j^Td_j > 0$ and   
$$ v_i^Td_j = 0\text,~~i\neq j\text{~~with~~}i\in \set I(x)\text.\footnotemark\footnotetext{For example, setting $s_j$ as the orthogonal projection of $v_j$ over the hyperplane generated by $\{ v_i~|~ i\in \set I(x)\text,~i\neq j\}$, one can consider $d_j = (v_j - s_j)/\|v_j - s_j\|$.}$$
Consequently, it follows that $ \nabla \phi_j(x)^Td_j > \nabla \phi_s(x)^Td_j$ and
$$ \nabla \phi_i(x)^Td_j = \nabla \phi_s(x)^Td_j\text,~~i\neq j\text{~~with~~}i\in \set I(x)\text.$$
So, since $\phi_i\in C^2$, for all $i\in \set I(x)$, we have that for all fixed $w_j\in \mathbb{R}^n$ it follows that
\begin{equation*}
\begin{split}
\phi_i(x+\epsilon (d_j+w_j)) & = \phi_i(x) + \epsilon \nabla \phi_i(x)^T(d_j+w_j) + O(\epsilon^2)\text,~~i\in \set I(x)\text,~~ i\neq j, \\
\phi_j(x+\epsilon (d_j+w_j)) & = \phi_j(x) + \epsilon \nabla \phi_j(x)^T(d_j+w_j) + O(\epsilon^2)\text.
\end{split}
\end{equation*}
Now, subtracting the first equation above from the second one and dividing the result by $\epsilon$, we obtain, for all $i\in \set I(x)$ with $i\neq j$, that
\begin{equation*}
\begin{split}
\frac{\phi_j(x+\epsilon (d_j+w_j)) - \phi_i(x+\epsilon (d_j+w_j))}{\epsilon} ={ }& \nabla \phi_j(x)^T(d_j+w_j) \\ 
& - \nabla \phi_i(x)^T(d_j+w_j) + O(\epsilon)\text.
\end{split} 
\end{equation*}
Consequently, supposing that
\begin{equation*}
w_j\in \set B\left(0, \delta(x) \right)\subset \mathbb{R}^n\text,
\end{equation*}
where
\begin{equation}\label{eq:delta-x}
\delta(x) := \min_{\underset{i\neq j}{i\in \set I(x)}}\left\{\frac{[\nabla \phi_j(x)-\nabla \phi_i(x)]^Td_j}{2\|\nabla \phi_j(x)-\nabla \phi_i(x)\|}\right\}>0\text,
\end{equation}
we must have, for all $i\in \set I(x)$ with $i\neq j$, that
\begin{equation*}
\begin{split}
\frac{\phi_j(x+\epsilon (d_j+w_j)) - \phi_i(x+\epsilon (d_j+w_j))}{\epsilon} ={ }& [\nabla \phi_j(x) - \nabla \phi_i(x)]^Td_j \\
& + [\nabla \phi_j(x) - \nabla \phi_i(x)]^Tw_j + O(\epsilon) \\
\geq{ }& [\nabla \phi_j(x) - \nabla \phi_i(x)]^Td_j \\ 
& - \|\nabla \phi_j(x) - \nabla \phi_i(x)\|\| w_j \| + O(\epsilon)\\
\geq{ }&\frac{[\nabla \phi_j(x) - \nabla \phi_i(x)]^Td_j}{2} + O(\epsilon)\text. 
\end{split} 
\end{equation*}
From the inequality above and noticing that $[\nabla \phi_j(x) - \nabla \phi_i(x)]^Td_j > 0$, for all $i\in \set I(x)$ with $i\neq j$, it is possible to find $\epsilon_j > 0$ small enough such that for all $\epsilon \in (0,\epsilon_j)$ the following relation holds
\begin{equation*}
\phi_j(x+\epsilon (d_j+w_j)) > \phi_i(x+\epsilon (d_j+w_j))\text,~~ i\in \set I(x) \text,~~i\neq j\text.
\end{equation*}
To complete the proof, notice that the functions $\phi_i$ are continuous, and therefore, it is possible to find $\tilde{\epsilon}>0$ such that for all $y\in \set B(x,\tilde{\epsilon})$ the following holds
\begin{equation*}
\phi_a(y) > \phi_b(y)\text,~~a\in \set I(x)\text,~~b\notin \set I(x)\text.
\end{equation*}
So, setting $\overline\epsilon:=\min\{\epsilon_1,\hdots,\epsilon_r,\tilde{\epsilon}\}$ and choosing $\epsilon \in (0,\overline\epsilon)$, we have that the set
\begin{equation*}
\set C_j(x,\epsilon):=\left\{ x + \tau(d_j+w_j)~|~ 0<\tau<\epsilon/2\text,~~w_j\in \set B\left(0, \delta(x) \right)\text,~~j\in \set I(x) \right\}\text,
\end{equation*}
where $\delta(x)$ is the value defined in~\eqref{eq:delta-x}, satisfies the properties previously claimed.
\end{proof}

From the above result, we can see that, for any $\epsilon > 0$ (even when $\epsilon \geq \overline\epsilon$, since in this case we have $\set B(x,\overline\epsilon) \subset \set B(x,\epsilon)$), the following set is not empty
\begin{equation}\label{eq:open-sets}
\set S_j(x,\epsilon):= \inte\left\{ y\in \set B(x,\epsilon)~\big|~ \phi_j(y) > \underset{i\neq j}{\max_{1\leq i\leq p}} \phi_i(y)\right\}\text,~~j\in \set I(x)\text.
\end{equation}

So, we can proceed with two additional results. They guarantee that GraFuS is well defined, i.e., the algorithm will not cycle forever from Step 5 to Step 1. Specifically, the first result tells us that under a good set of sampled points, it is possible to obtain $\text{Ared} > \rho\text{Pred}$ (the proof of the result is based on ideas from~\cite{ZKL85}). 
\begin{lemma}\label{lemma:bound}
Suppose that Assumptions~\ref{assump1} and~\ref{assump:affine} hold. In Algorithm~\ref{vugs-alg}, consider fixed outer and inner iterations, denoted by $k$ and $l$, respectively. Let $\overline{x}\in \mathbb{R}^n$ be a nonstationary point for the function $f: \mathbb{R}^n\rightarrow \mathbb{R}$, $\rho\in (0,1)$ be a fixed real number and $\set S_j(\overline{x},\epsilon)$ be the set defined in~\eqref{eq:open-sets} for any $\epsilon > 0$. Therefore, there exist $\overline{\Delta}$ and $\overline{\delta}$ strictly greater than zero such that, if the following hypotheses hold 
\begin{itemize}[leftmargin=1cm]
\item[i)]{$x_k \in \set B(\overline{x},\overline{\delta})$;}
\item[ii)]{$0< \Delta_{k,l} < \overline{\Delta}$;}
\item[iii)]{there exist $\overline\epsilon\equiv \overline\epsilon(k,l) > 0$ and $M > 0$ such that 
\begin{itemize}
\item[a)]{for all $j\in \set I(\overline{x})$, we have ${\set S}_j(\overline{x},\overline\epsilon) \subset \set B(x_k,M\cdot\Delta_{k,l})$;}
\item[b)]{for all $j\in \set I(\overline{x})$, there exists $i\in \{1,\hdots,m\}$ such that $x_{k,i}^l\in {\set S}_j(\overline{x},\overline\epsilon)$;}
\item[c)]{for all $i\in \{1,\hdots,m\}$, there exists $j\in \set I(\overline{x})$ such that $x_{k,i}^l\in \set S_j(\overline{x},\overline\epsilon)$,}
\end{itemize}
}
\end{itemize}
then
\begin{equation*}
\text{Ared}_{k,l} > \rho\text{Pred}_{k,l}\text.
\end{equation*}
\end{lemma}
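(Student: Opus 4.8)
The plan is to treat this as a standard trust-region consistency argument in the spirit of~\cite{ZKL85}: I would show that, under the good sampling hypotheses, the local quadratic model reproduces $f$ around $x_k$ up to second order, while simultaneously guaranteeing a predicted decrease that is \emph{first} order in $\Delta_{k,l}$, so that the $O(\Delta_{k,l}^2)$ discrepancy between $\text{Ared}_{k,l}$ and $\text{Pred}_{k,l}$ becomes negligible for small $\Delta_{k,l}$.

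First I would exploit the sampling hypotheses to identify the model with the active faces. Because each $\set S_j(\overline x,\overline\epsilon)$ is open and $f\equiv\phi_j$ on it, any sampled $x_{k,i}^l\in\set S_j(\overline x,\overline\epsilon)$ satisfies $\nabla f(x_{k,i}^l)=\nabla\phi_j(x_{k,i}^l)$ and $f(x_{k,i}^l)=\phi_j(x_{k,i}^l)$ exactly. Writing $j(i)$ for the face containing sample $i$ (well defined by hypothesis iii.c), a first-order Taylor expansion of $\phi_{j(i)}\in C^2$ about $x_{k,i}^l$ gives $(\tilde f_{k,l})_i=\phi_{j(i)}(x_k)+O(\|x_k-x_{k,i}^l\|^2)$ and $\nabla f(x_{k,i}^l)=\nabla\phi_{j(i)}(x_k)+O(\|x_k-x_{k,i}^l\|)$; by hypothesis iii.a these distances are at most $M\Delta_{k,l}$, so the errors are $O(\Delta_{k,l}^2)$ and $O(\Delta_{k,l})$. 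Hypotheses iii.b and iii.c together force $\{j(i)\}_{i=1}^m=\set I(\overline x)$, and since for $x_k$ close to $\overline x$ the inactive faces stay strictly dominated (the continuity argument in the proof of Lemma~\ref{lemma:enough-points}) and $\set I(x_k)\subseteq\set I(\overline x)$, I obtain $\max_i(\tilde f_{k,l})_i=f(x_k)+O(\Delta_{k,l}^2)$ and, for any $d$ with $\|d\|_\infty\le\Delta_{k,l}$, the identity $\max_i\{(\tilde f_{k,l})_i+\nabla f(x_{k,i}^l)^Td\}=\max_{j\in\set I(\overline x)}\{\phi_j(x_k)+\nabla\phi_j(x_k)^Td\}+O(\Delta_{k,l}^2)$.

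Next I would bound $|\text{Ared}_{k,l}-\text{Pred}_{k,l}|$. Writing the model value as $M(d):=\max_i\{(\tilde f_{k,l})_i+\nabla f(x_{k,i}^l)^Td\}+\tfrac12 d^TH_kd$, at optimality $z_{k,l}+\tfrac12 d_{k,l}^TH_kd_{k,l}=M(d_{k,l})$ and $\max_i(\tilde f_{k,l})_i=M(0)$, so $\text{Pred}_{k,l}=M(0)-M(d_{k,l})$. Using the previous paragraph together with the second-order expansion $f(x_k+d)=\max_{j\in\set I(\overline x)}\{\phi_j(x_k)+\nabla\phi_j(x_k)^Td\}+O(\|d\|^2)$ (valid because $x_k+d$ stays near $\overline x$) and the bound $d^TH_kd\le\overline\varsigma\|d\|^2$ from Assumption~\ref{assump1}, both $f(x_k)-M(0)$ and $M(d_{k,l})-f(x_k+d_{k,l})$ are $O(\Delta_{k,l}^2)$; hence $\text{Ared}_{k,l}-\text{Pred}_{k,l}=[f(x_k)-M(0)]+[M(d_{k,l})-f(x_k+d_{k,l})]\le C\Delta_{k,l}^2$ for a constant $C$. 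The crucial quantitative step is then the lower bound $\text{Pred}_{k,l}\ge c\,\Delta_{k,l}$: since $\overline x$ is nonstationary, $0\notin\overline\partial f(\overline x)=\co\{\nabla\phi_j(\overline x)\}_{j\in\set I(\overline x)}$, so with $g^*$ the minimal-norm element and $d^*=-g^*/\|g^*\|$ one has $\nabla\phi_j(\overline x)^Td^*\le-\|g^*\|<0$ for every $j\in\set I(\overline x)$, and by continuity this persists (with $-\|g^*\|/2$) at $x_k$ for $\overline\delta$ small. Inserting the feasible direction $t\,d^*$, with $t$ proportional to $\Delta_{k,l}$, into $M$ and using $\phi_{j}(x_k)\le f(x_k)$ yields $M(t d^*)\le f(x_k)-c'\Delta_{k,l}+O(\Delta_{k,l}^2)$, and minimality of $d_{k,l}$ gives $M(d_{k,l})\le M(t d^*)$; combined with $M(0)\ge f(x_k)-O(\Delta_{k,l}^2)$ this delivers $\text{Pred}_{k,l}\ge c\,\Delta_{k,l}$ once $\Delta_{k,l}$ is small. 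Finally I would write $\text{Ared}_{k,l}-\rho\,\text{Pred}_{k,l}=(\text{Ared}_{k,l}-\text{Pred}_{k,l})+(1-\rho)\text{Pred}_{k,l}\ge-C\Delta_{k,l}^2+(1-\rho)c\,\Delta_{k,l}>0$ whenever $\Delta_{k,l}<(1-\rho)c/C$, so choosing $\overline\Delta$ below this threshold (and small enough to keep $x_k+d_{k,l}$ near $\overline x$ with the inactive faces dominated) together with $\overline\delta$ small finishes the argument.

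The main obstacle I anticipate is the uniform bookkeeping of the error constants: I must guarantee that the $O(\Delta_{k,l}^2)$ and $O(\Delta_{k,l})$ terms carry constants depending only on a fixed neighborhood of $\overline x$ (through bounds on the Hessians $\nabla^2\phi_j$, the gradient moduli, and the quantities $M$ and $\overline\varsigma$) and not on the particular iteration, and that the descent direction $d^*$ survives both the perturbation from $\overline x$ to $x_k$ and the replacement of the exact $\nabla\phi_j(\overline x)$ by the sampled $\nabla f(x_{k,i}^l)$. Reconciling the thresholds on $\overline\Delta$ and $\overline\delta$ so that all three estimates hold simultaneously is where the delicacy lies.
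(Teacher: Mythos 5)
Your proposal is correct and follows essentially the same strategy as the paper's proof: use the good-sampling hypotheses to show that the cutting-plane model reproduces $f$ at $x_k$ and $x_k+d_{k,l}$ up to higher order (so $\text{Ared}_{k,l}-\text{Pred}_{k,l}$ is negligible relative to $\Delta_{k,l}$), and then lower-bound $\text{Pred}_{k,l}$ by a constant times $\Delta_{k,l}$ by inserting a feasible trial step of length $\Delta_{k,l}$ along a uniformly descending direction obtained from the nonstationarity of $\overline{x}$. The only cosmetic differences are that the paper produces the descent direction via hyperplane separation and the generalized directional derivative (obtaining $o(\Delta_{k,l})$ error terms) whereas you use the minimal-norm subgradient and track $O(\Delta_{k,l}^2)$ constants explicitly; both yield the same conclusion.
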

\begin{proof} 
First, we choose $h>0$ as a sufficiently small number such that for all $x\in \set B(\overline{x},h)$, we have
\begin{equation*}
\phi_j(x) > \underset{i\notin \set I(\overline{x})}{\max_{1\leq i \leq p}} \phi_i(x)\text{,~~for all~}j\in \set I(\overline{x})\text.
\end{equation*}
Since $\overline{x}$ is not a stationary point for $f$, we must have that $0\notin \overline{\partial} f(\overline{x})$. Recalling that $\overline{\partial} f(\overline{x})$ is a closed and convex set, it follows by the Hyperplane Separation Theorem~\cite[Section 2.5]{BOV04} that there exist a unitary vector $v\in \mathbb{R}^n$ and a scalar $\tau > 0$ such that 
\begin{equation*}
s^Tv \leq -\tau\text,~~\text{for all~} s\in \overline{\partial} f(\overline{x})\text.
\end{equation*}
Since the generalized directional derivative of $f$ at $\overline{x}$ in the direction $v$ is given by 
\begin{equation*}
f^\circ(\overline{x};v)= \underset{t\downarrow 0}{\limsup_{x\rightarrow \overline{x}}}~\frac{f(x+tv)-f(x)}{t} = \max\{s^Tv~:~s\in \overline{\partial} f(\overline{x})\}\text,
\end{equation*} 
we have that $f^\circ(\overline{x};v)\leq -\tau$. Thus, there exist $\overline{\Delta}\in (0,h)$ and $\overline{\delta}\in (0,h)$ such that for all $x\in \set B\left(\overline{x},\overline{\delta}\right)$ and $\Delta \in (0,\overline{\Delta})$, we have
\begin{equation}\label{eq:sufficient-decrease}
f(x+\Delta v)-f(x) < -\frac{\tau}{2}\Delta\text.
\end{equation}

Now, let us keep this information in mind and proceed with a parallel idea. Let us suppose that the hypotheses $i)$, $ii)$ and $iii)$ hold for $\overline\delta$ and $\overline\Delta$ found above. Then, because the conditions inside $iii)$ ensure a good sampling, we have
\begin{equation}\label{eq:bound-aux1}
\begin{split}
f(x_k)  = & \max_{j\in \set I(\overline{x})}\{\phi_j(x_k)\}\\
        = & \max_{1\leq i\leq m}\{f(x_{k,i}^l) + \nabla f(x_{k,i}^l)^T(x_k-x_{k,i}^l)\} + o(\Delta_{k,l}) \\
        & \text{(notice that~}x_{k,i}^l\in \set B(x_k,M\cdot\Delta_{k,l})\text{)}
\end{split}
\end{equation}       
and
\begin{equation*}
\begin{split}       
f(x_k+d_{k,l}) = & \max_{j\in \set I(\overline{x})}\{\phi_j(x_k+d_{k,l})\}\\
    = & \max_{1\leq i\leq m}\{f(x_{k,i}^l) + \nabla f(x_{k,i}^l)^T(x_k+d_{k,l}-x_{k,i}^l)\} + o({\Delta_{k,l}})\\
      &  \text{(notice that~}x_{k,i}^l\in \set B(x_k,M\cdot\Delta_{k,l})\text{~and that~}\|d_{k,l}\|_\infty\leq \Delta_{k,l}\text{).}
\end{split}
\end{equation*}
So, we have $\text{Ared}_{k,l} = f(x_k) - f(x_k+d_{k,l}) = \text{Pred}_{k,l} + o({\Delta}_{k,l})$. Consequently, to prove the statement, we just need to show that $\Delta_{k,l} = O(\text{Pred}_{k,l})$, since we would have, for any $\eta = (1-\rho) \in (0,1)$, a sufficiently small $\overline{\Delta}>0$ such that   
\begin{equation*}
\text{Ared}_{k,l} - \text{Pred}_{k,l} = o(\Delta_{k,l}) > -\eta\text{Pred}_{k,l}\text,
\end{equation*}
which yields that $\text{Ared}_{k,l} > (1-\eta)\text{Pred}_{k,l} = \rho\text{Pred}_{k,l}$. So, to show that such a condition holds, we define
\begin{equation*} 
\hat{z} := \max_{1\leq i\leq m}\{f(x_{k,i}^l) + \nabla f(x_{k,i}^l)^T(x_k+\Delta_{k,l} v-x_{k,i}^l)\}\text.
\end{equation*}
Notice that, by the same reasoning used before, we have
\begin{equation}\label{eq:bound-aux2}
\hat{z} = f(x_k + \Delta_{k,l}v) + o(\Delta_{k,l})\text.
\end{equation}
Moreover, since $(d_{k,l},z_{k,l})$ is the solution of the quadratic programming problem at Step 2, we have $z_{k,l}\leq \hat{z} + o(\Delta_{k,l})$, and hence,
\begin{equation*}
\text{Pred}_{k,l} \geq \max_{1\leq i\leq m} \{f(x_{k,i}^l)+\nabla f(x_{k,i}^l)^T(x_k - x_{k,i}^l)\} - \left(\hat{z} + \frac{\Delta_{k,l}^2}{2}v^TH_kv\right) + o(\Delta_{k,l})\text.
\end{equation*}
Consequently, recalling~\eqref{eq:bound-aux1} and~\eqref{eq:bound-aux2}, it yields that
\begin{equation*}
\begin{split}
\text{Pred}_{k,l} & \geq f(x_k)-f(x_k+\Delta_{k,l}v) + o(\Delta_{k,l})\\
& > \frac{\tau}{2}\Delta_{k,l} + o(\Delta_{k,l})\text,
\end{split}
\end{equation*}
where the last inequality comes from~\eqref{eq:sufficient-decrease}. Therefore, if $\overline{\Delta}$ is small enough, we obtain the desired result.
\end{proof}

With the above result, we present the following lemma, which states that if GraFuS is at an iteration $k$ and $x_k$ is not a stationary point for $f$, then the index $l$ of the inner iteration has an upper limit (with probability one).
\begin{lemma}\label{Lemma:l_k}
Suppose that Assumptions~\ref{assump1} and~\ref{assump:affine} hold. Moreover, for an iteration~$k$, assume that $x_k$ is not a stationary point for $f$. Then, with probability one, there exists $\overline{l}\in \mathbb{N}$ such that the indices of the inner iterations satisfy $l\leq \overline{l}$.
\end{lemma}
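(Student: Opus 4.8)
The plan is to show that, inside a fixed outer iteration $k$ (with $x_k$ nonstationary), the inner counter $l$ can grow without bound only along an infinite run of trust-region rejections, and that such a run has probability zero. First I would trace how $l$ is incremented. There are exactly two mechanisms. The second bullet of Step~3 fires only when the trust region is active; it sets $\Delta_{k,l}=+\infty$ and re-solves \eqref{eq:qp-max} with the same data, and since the quadratic term is positive definite (Assumption~\ref{assump1}) the re-solved direction is strictly interior, so the next visit to Step~3 necessarily takes the first bullet and exits to Step~6. Thus this route adds at most one inner iteration and cannot sustain a loop. The only remaining mechanism is Step~5 with $\text{Ared}_{k,l}\le\rho\,\text{Pred}_{k,l}$, which multiplies both $\Delta_{k,l}$ and $\epsilon_{k,l}$ by $\theta\in(0,1)$ and resamples. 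Consequently, an unbounded $l$ forces infinitely many such rejections, along which $\Delta_{k,l}=\theta^{l}\gamma_\Delta\nu_k\to 0$ and $\epsilon_{k,l}=\theta^{l}\gamma_\epsilon\nu_k\to 0$, with constant ratio $\Delta_{k,l}/\epsilon_{k,l}=\gamma_\Delta/\gamma_\epsilon$.

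Next I would reduce the problem to a single good-sampling event per inner iteration by invoking Lemma~\ref{lemma:bound} with $\overline x=x_k$, which is admissible since $x_k$ is nonstationary; this produces thresholds $\overline\Delta,\overline\delta>0$. Hypothesis~(i) holds trivially as $\overline x=x_k$, and hypothesis~(ii) holds for all large $l$ because $\Delta_{k,l}\to 0$. For hypothesis~(iii) I would take $\overline\epsilon=(\epsilon_{k,l})^{\sigma_k}$, the current sampling radius, and $M=\gamma_\epsilon/\gamma_\Delta$; then $\set S_j(x_k,\overline\epsilon)\subset\set B(x_k,(\epsilon_{k,l})^{\sigma_k})$, and since $\sigma_k\ge 1$ and $\epsilon_{k,l}\le 1$ eventually, $(\epsilon_{k,l})^{\sigma_k}\le\epsilon_{k,l}=M\Delta_{k,l}$, which gives part~(a). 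Parts~(b) and~(c) are precisely the requirements that every $\set S_j(x_k,\overline\epsilon)$ receives a sample and every sample lies in some $\set S_j(x_k,\overline\epsilon)$. Note that the loop is prolonged past $l$ only if Step~4 is reached (certificate $\ge\nu_k$) and Step~5 rejects; if Step~4 is not reached the algorithm exits through Step~3 within one more iteration. Hence, whenever (b)--(c) hold at a large $l$, Lemma~\ref{lemma:bound} yields $\text{Ared}_{k,l}>\rho\,\text{Pred}_{k,l}$, the rejection cannot occur, and the inner loop ends within one more iteration.

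It then remains to bound the good-sampling probability from below, uniformly in $l$. Condition~(c) holds almost surely: by Assumption~\ref{assump:affine} (cf. Remark~\ref{remark:1}) the set where two active faces coincide is a finite union of $C^1$ hypersurfaces near $x_k$, hence has measure zero, so a uniform sample lands in some $\set S_j(x_k,\overline\epsilon)$ with probability one. For condition~(b), let $p_j(l)$ be the probability that a single uniform sample in $\set B(x_k,(\epsilon_{k,l})^{\sigma_k})$ falls in $\set S_j(x_k,(\epsilon_{k,l})^{\sigma_k})$. Writing $y=x_k+ru$ with $r=(\epsilon_{k,l})^{\sigma_k}$ and $u\in\set B(0,1)$, the defining region rescales to $\{u:[\nabla\phi_j(x_k)-\nabla\phi_i(x_k)]^Tu>0,\ i\in\set I(x_k)\setminus\{j\}\}$ as $r\to 0$; by Lemma~\ref{lemma:enough-points} this cone has nonempty interior, hence positive volume fraction, so $p_j(l)\to\overline p_j>0$ and $p_j(l)\ge c>0$ for all large $l$. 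Since $|\set I(x_k)|\le n+1\le m$, the probability that every face is hit is at least that of one fixed favorable assignment of the first $|\set I(x_k)|$ samples, namely $\prod_{j\in\set I(x_k)}p_j(l)\ge c^{\,n+1}=:\pi>0$, uniformly in $l$.

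Finally I would close with a geometric-series argument. The samples at distinct inner iterations are independent, so, conditioning on reaching iteration $l\ge L_0$, the good-sampling event has probability at least $\pi$; therefore the probability of not having left the inner loop by iteration $L$ is at most $(1-\pi)^{L-L_0}\to 0$. Hence, with probability one a good sampling occurs at some finite $l$, the loop terminates, and the desired bound $\overline l$ exists. I expect the main obstacle to be the uniform lower bound $p_j(l)\ge c$: one must argue carefully that the $O(\epsilon^2)$ Hessian contributions appearing in the expansions of Lemma~\ref{lemma:enough-points} do not destroy the leading-order conical structure, so the face-volume fractions stay bounded away from zero as the sampling radius collapses. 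This is exactly where the positivity of the solid angles (Lemma~\ref{lemma:enough-points}) and the constant ratio $\Delta_{k,l}/\epsilon_{k,l}$ play their decisive roles.
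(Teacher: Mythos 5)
Your proposal is correct and follows essentially the same route as the paper: invoke Lemma~\ref{lemma:bound} with $\overline{x}=x_k$, verify hypotheses $i)$--$iii)a)$ for large $l$ using $\epsilon_{k,l}=(\gamma_\epsilon/\gamma_\Delta)\Delta_{k,l}\to 0$, and then bound the per-inner-iteration probability of a good sampling (items $b)$ and $c)$) below by a positive constant independent of $l$, so that an infinite run of rejections has probability zero. The only cosmetic differences are that you obtain the uniform lower bound on the face probabilities by a rescaling/limiting argument where the paper inscribes an explicit ball $\set B_j^{k,l}$ in $\set C_j(x_k,(\epsilon_{k,l})^{\sigma_k})$ and compares volumes, and that you make explicit the (correct) observation that the $\Delta_{k,l}=+\infty$ branch of Step~3 cannot sustain the inner loop.
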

\begin{proof}
Let us assume, for contradiction, that such $\overline{l}$ does not exist, i.e., $l\rightarrow \infty$ at the iteration $k$. Consequently, we must have, for all $l\in \mathbb{N}$, that 
\begin{equation*}
\|H_k^{-1}G_{k,l}\lambda_{k,l}\| \geq \nu_k
\end{equation*}
and $\text{Ared}_{k,l} \leq \rho\text{Pred}_{k,l}$. Additionally, by the way we have designed our algorithm, we see that
\begin{equation*}
\epsilon_{k,l} = \frac{\gamma_\epsilon}{\gamma_\Delta}\Delta_{k,l}\text,~~\text{for all~} k,l\in \mathbb{N}\text,
\end{equation*}
and, by the contradiction hypothesis, the following holds: $\Delta_{k,l}\rightarrow 0$ as $l\rightarrow \infty$.

Therefore, setting $\overline{x}:= x_k$ in Lemma~\ref{lemma:bound}, it is straightforward to see that at some $\tilde{n}\in \mathbb{N}$, if $l\geq \tilde{n}$, then hypotheses $i)$ and $ii)$ of Lemma~\ref{lemma:bound} are valid. Moreover, considering $\overline{\epsilon}:=(\epsilon_{k,l})^{\sigma_k}$ and $M:= \max\{\gamma_\epsilon\gamma_\Delta^{-1},\gamma_\epsilon^2\gamma_\Delta^{-2}\}$ for a fixed inner iteration $l$, we will satisfy hypothesis $iii)$ item $a)$ of Lemma~\ref{lemma:bound}. Therefore, if at this specific inner iteration $l$ we do not have $\text{Ared}_{k,l} > \rho\text{Pred}_{k,l}$, it is due to the fact that we did not sample the points properly, i.e, the items $b)$ and/or $c)$ of hypothesis $iii)$ were not fulfilled. So, since $l\rightarrow \infty$ by the contradiction hypothesis we have made, it is also true that the next inner iteration will not satisfy items $b)$ and/or $c)$ and so on. We claim that this behavior has probability zero to occur.

Indeed, let us assume a fixed $j\in \set I(x_k)$ and notice that, by the way we have defined $d_j$ and $\set C_j(x_k,(\epsilon_{k,l})^{\sigma_k})$ in the proof of Lemma~\ref{lemma:enough-points}, we have that (for $(\epsilon_{k,l})^{\sigma_k}$ sufficiently small) $\set B_j^{k,l} \subset \set C_j(x_k,(\epsilon_{k,l})^{\sigma_k})$, where
\begin{equation*}
\set B_j^{k,l} := \set B\left(x_k + \frac{(\epsilon_{k,l})^{\sigma_k}}{4}d_j, \frac{(\epsilon_{k,l})^{\sigma_k}}{8}\min_{\underset{i\neq j}{i\in \set I(x_k)}}\left\{\frac{[\nabla \phi_j(x_k)-\nabla \phi_i(x_k)]^Td_j}{2\|\nabla \phi_j(x_k)-\nabla \phi_i(x_k)\|}\right\} \right)\text.
\end{equation*} 
Consequently, the volume of $\set B_j^{k,l}$ in $\mathbb{R}^n$ is given by
\begin{equation*}
\vol\left(\set B_j^{k,l}\right) = \frac{\pi^{n/2}}{\Gamma(n/2+1)}\left(\min_{\underset{i\neq j}{i\in \set I(x_k)}}\left\{\frac{[\nabla \phi_j(x_k)-\nabla \phi_i(x_k)]^Td_j}{2\|\nabla \phi_j(x_k)-\nabla \phi_i(x_k)\|}\right\} \right)^n\left(\frac{(\epsilon_{k,l})^{\sigma_k}}{8}\right)^n\text,
\end{equation*}
where $\Gamma$ is the Gamma function~\cite{HUB82}. On the other hand, it follows that
\begin{equation*}
\vol (\set B(x_k,(\epsilon_{k,l})^{\sigma_k})) = \frac{\pi^{n/2}}{\Gamma(n/2+1)}\left((\epsilon_{k,l})^{\sigma_k}\right)^n\text. 
\end{equation*}
Therefore, since the sampled points are chosen in $\set B(x_k,(\epsilon_{k,l})^{\sigma_k})$ and 
\begin{equation*}
\set B_j^{k,l}\subset \set C_j(x_k,(\epsilon_{k,l})^{\sigma_k}) \subset \set S_j(x_k,(\epsilon_{k,l})^{\sigma_k})\text,
\end{equation*} 
we must have, for all $i\in \{1,\hdots,m\}$, that the conditional probability
\begin{equation*}
\proj (x_{k,i}^l\in \set S_j(x_k,(\epsilon_{k,l})^{\sigma_k})~|~x_{k,i}^l\in \set B(x_k,(\epsilon_{k,l})^{\sigma_k})) = \frac{\vol(\set S_j(x_k,(\epsilon_{k,l})^{\sigma_k}))}{\vol (\set B(x_k,(\epsilon_{k,l})^{\sigma_k}))} \\ 
\end{equation*}
must be greater than the following strictly positive number
\begin{equation*}
\frac{1}{8^n}\left(\min_{\underset{i\neq j}{i\in \set I(x_k)}}\left\{\frac{[\nabla \phi_j(x_k)-\nabla \phi_i(x_k)]^Td_j}{2\|\nabla \phi_j(x_k)-\nabla \phi_i(x_k)\|}\right\} \right)^n\text.
\end{equation*} 
With this inequality, we conclude that the probability of the items $b)$ and $c)$ of hypothesis $iii)$ to occur simultaneously is strictly positive and does not depend on $l$. Therefore, the probability of $l\rightarrow \infty$ is zero, which concludes the proof. 
\end{proof}

We are close to reach the convergence theorem of GraFuS. For that goal, we need to prove two additional technical lemmas. Furthermore, to have a clearer proof, from now on we will denote by $\overline{l}_k$ the largest value of the index $l$ at the iteration $k$, established by Lemma~\ref{Lemma:l_k}.
\begin{lemma}\label{lemma:norm-conv}
Let us consider the GraFuS algorithm under Assumptions~\ref{assump1} and~\ref{assump:affine}. If there exists an infinite index set $\tilde{\set K}\subset \mathbb{N}$ such that $\text{Pred}_{k,\overline{l}_k}/\Delta_{k,\overline{l}_k} \underset{k\in \tilde{\set K}}{\rightarrow} 0$, then $ \|G_{k,\overline{l}_k}\lambda_{k,\overline{l}_k}\| \underset{k\in \tilde{\set K}}{\rightarrow} 0$.
\end{lemma}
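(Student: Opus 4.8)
The plan is to use strong duality between the subproblem~\eqref{eq:qp-max} and the dual problem displayed just before Algorithm~\ref{vugs-alg} in order to rewrite $\text{Pred}_{k,\overline{l}_k}$ as a sum of nonnegative terms, one controlling $\|G_{k,\overline{l}_k}\lambda_{k,\overline{l}_k}+\omega_{k,\overline{l}_k}\|$ and another controlling $\|\omega_{k,\overline{l}_k}\|$. Since~\eqref{eq:qp-max} is a convex quadratic program with affine constraints and an obvious feasible point (take $d=0$), its optimal value equals that of its dual; hence the primal optimal value $z_{k,\overline{l}_k}+\frac{1}{2}d_{k,\overline{l}_k}^TH_kd_{k,\overline{l}_k}$ coincides with the optimal dual value. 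Substituting this identity into the definition of Pred in Step~4 and abbreviating $\tilde{f}^{\max}:=\max_i(\tilde{f}_{k,\overline{l}_k})_i$ and $\xi_k:=G_{k,\overline{l}_k}\lambda_{k,\overline{l}_k}+\omega_{k,\overline{l}_k}$, I would obtain
\[
\text{Pred}_{k,\overline{l}_k} = \bigl(\tilde{f}^{\max}-\lambda_{k,\overline{l}_k}^T\tilde{f}_{k,\overline{l}_k}\bigr) + \frac{1}{2}\,\xi_k^T H_k^{-1}\xi_k + \Delta_{k,\overline{l}_k}\,\|\omega_{k,\overline{l}_k}\|_1 .
\]

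Next I would verify that each of the three terms is nonnegative: the first because $\lambda_{k,\overline{l}_k}\geq 0$ and $e^T\lambda_{k,\overline{l}_k}=1$, so $\lambda_{k,\overline{l}_k}^T\tilde{f}_{k,\overline{l}_k}$ is a convex combination bounded above by $\tilde{f}^{\max}$; the second because $H_k^{-1}$ is positive definite; the third trivially. Discarding the first term and invoking the eigenvalue bound of Assumption~\ref{assump1} (which gives $\xi^T H_k^{-1}\xi\geq \overline{\varsigma}^{-1}\|\xi\|^2$), this yields
\[
\text{Pred}_{k,\overline{l}_k}\ \geq\ \frac{1}{2\overline{\varsigma}}\,\|\xi_k\|^2 + \Delta_{k,\overline{l}_k}\,\|\omega_{k,\overline{l}_k}\|_1 .
\]

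Dividing by $\Delta_{k,\overline{l}_k}>0$ and using that both terms on the right are nonnegative, the hypothesis $\text{Pred}_{k,\overline{l}_k}/\Delta_{k,\overline{l}_k}\to 0$ along $\tilde{\set K}$ forces both $\|\omega_{k,\overline{l}_k}\|_1\to 0$ and $\|\xi_k\|^2/\Delta_{k,\overline{l}_k}\to 0$. The decisive remaining point is that the radius is uniformly bounded: since Pred is formed only at Step~4, where $\Delta_{k,\overline{l}_k}<+\infty$, and $\Delta$ is non-increasing within a single outer iteration (the only increase, to $+\infty$, occurs in Step~3 and is followed by an exit without evaluating Pred), one has $\Delta_{k,\overline{l}_k}\leq\Delta_{k,0}=\gamma_\Delta\nu_k\leq\gamma_\Delta\nu_0$. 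Multiplying $\|\xi_k\|^2/\Delta_{k,\overline{l}_k}\to 0$ by this bound gives $\|\xi_k\|\to 0$, and combining with $\|\omega_{k,\overline{l}_k}\|\leq\|\omega_{k,\overline{l}_k}\|_1\to 0$ through the triangle inequality $\|G_{k,\overline{l}_k}\lambda_{k,\overline{l}_k}\|\leq\|\xi_k\|+\|\omega_{k,\overline{l}_k}\|$ finishes the argument.

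The main obstacle is not computational but structural: the right idea is the decomposition $G_{k,\overline{l}_k}\lambda_{k,\overline{l}_k}=\xi_k-\omega_{k,\overline{l}_k}$ dictated by the dual variables, which lets a single quantity, Pred, dominate both $\|\xi_k\|$ and $\|\omega_{k,\overline{l}_k}\|$. The step that genuinely needs care is passing from $\|\xi_k\|^2/\Delta_{k,\overline{l}_k}\to 0$ to $\|\xi_k\|\to 0$, which hinges essentially on the uniform boundedness of the finite radii $\Delta_{k,\overline{l}_k}$; establishing the strong-duality identity and confirming that Pred is only ever evaluated while $\Delta<+\infty$ are the supporting details that must be checked.
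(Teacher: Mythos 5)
Your proposal is correct and follows essentially the same route as the paper's proof: invoke strong duality for the quadratic subproblem, rewrite $\text{Pred}_{k,\overline{l}_k}$ as the sum of the nonnegative terms $\max_i(\tilde f_{k,\overline{l}_k})_i-\lambda_{k,\overline{l}_k}^T\tilde f_{k,\overline{l}_k}$, $\tfrac12\xi_k^TH_k^{-1}\xi_k$ and $\Delta_{k,\overline{l}_k}\|\omega_{k,\overline{l}_k}\|_1$, and let $\text{Pred}/\Delta\to 0$ kill the last two. You in fact spell out more carefully than the paper the final passage from $\|\xi_k\|^2/\Delta_{k,\overline{l}_k}\to 0$ to $\|\xi_k\|\to 0$ via the uniform bound $\Delta_{k,\overline{l}_k}\le\gamma_\Delta\nu_0$, which the paper leaves implicit.
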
 
\begin{proof}
First, notice that the quadratic programming problem presented in~\eqref{eq:qp-max} satisfies the Slater's condition. Indeed, if one considers $d_k = 0$ and $z_k = \max\{\tilde f_{k}\} + 1$ in~\eqref{eq:qp-max}, then we see that all inequalities are strictly satisfied. Thus, since the problem is also convex, we can guarantee that the quadratic programming problem satisfies strong duality. So, we have
\begin{equation*}
\begin{split}
z_{k,\overline{l}_k} + \frac{1}{2}d_{k,\overline{l}_k}^TH_kd_{k,\overline{l}_k} = &~~\lambda_{k,\overline{l}_k}^T\tilde{f}_{k,\overline{l}_k} \\
& - \frac{1}{2}\left(G_{k,\overline{l}_k}\lambda_{k,\overline{l}_k} + \omega_{k,\overline{l}_k}\right)^TH_k^{-1}\left(G_{k,\overline{l}_k}\lambda_{k,\overline{l}_k} + \omega_{k,\overline{l}_k}\right)\\
& - \Delta_{k,\overline{l}_k}\|\omega_{k,\overline{l}_k}\|_1\text.
\end{split} 
\end{equation*}
Thus, defining 
\begin{equation}\label{eq:alpha}
\alpha_k := \frac{1}{2}\left(G_{k,\overline{l}_k}\lambda_{k,\overline{l}_k} + \omega_{k,\overline{l}_k}\right)^TH_k^{-1}\left(G_{k,\overline{l}_k}\lambda_{k,\overline{l}_k} + \omega_{k,\overline{l}_k}\right) + \Delta_{k,\overline{l}_k}\|\omega_{k,\overline{l}_k}\|_1\text,
\end{equation}
it yields
\begin{equation*}
\begin{split}
\lambda_{k,\overline{l}_k}^T\tilde{f}_{k,\overline{l}_k} - \alpha_k  = z_{k,\overline{l}_k} + \frac{1}{2}d_{k,\overline{l}_k}^TH_kd_{k,\overline{l}_k} & \Rightarrow \alpha_k = \lambda_{k,\overline{l}_k}^T\tilde{f}_{k,\overline{l}_k} \\ 
&~~~~~~~~~~~- \left(z_{k,\overline{l}_k} + \frac{1}{2}d_{k,\overline{l}_k}^TH_kd_{k,\overline{l}_k}\right)\\
& \Rightarrow \alpha_k \leq \text{Pred}_{k,\overline{l}_k}\\
& \text{~~~~(since $\lambda_{k,\overline{l}_k}\geq 0$ and $e^T\lambda_{k,\overline{l}_k} = 1$)}\\
& \Rightarrow \frac{\alpha_k}{\Delta_{k,\overline{l}_k}} \leq \frac{\text{Pred}_{k,\overline{l}_k}}{\Delta_{k,\overline{l}_k}}\\
& \Rightarrow \frac{\alpha_k}{\Delta_{k,\overline{l}_k}} \underset{k\in \tilde{\set K}}{\rightarrow} 0\text.
\end{split}
\end{equation*}
Consequently, by Assumption~\ref{assump1} and~\eqref{eq:alpha}, we obtain $\|G_{k,\overline{l}_k}\lambda_{k,\overline{l}_k}\|\underset{k\in \tilde{\set K}}{\rightarrow} 0$. 
\end{proof}

Finally, we present the last result before our main statement of the global convergence analysis.

\begin{lemma}\label{lemma:nu-zero}
Suppose that Assumptions~\ref{assump1} and~\ref{assump:affine} hold and GraFuS has generated an infinite sequence $\{x_k\}\subset \mathbb{R}^n$. Moreover, assume that there exists a cluster point $\overline{x}$ of this sequence that is a stationary point for $f$. Then, with probability one, the sequence $\{\nu_k\}$ must converge to zero.
\end{lemma}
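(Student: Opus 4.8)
The plan is to argue by contradiction on the optimality certificate. The sequence $\{\nu_k\}$ is nonincreasing: the only place it can change are Step~3 (first bullet), where $\nu_{k+1}=\min\{\max\{\|H_k^{-1}G_{k,l}\lambda_{k,l}\|,(\nu_k)^\varrho\},\delta\nu_k\}\leq\delta\nu_k<\nu_k$, and Steps~5--6, which keep $\nu_{k+1}=\nu_k$. Hence $\nu_k\to\nu_*\geq0$, and I would assume $\nu_*>0$ and contradict the existence of a stationary cluster point $\overline{x}$. Since each firing of the $\nu$-decreasing branch shrinks $\nu$ by at least the factor $\delta\in(0,1)$, that branch can fire only finitely often; thus there is $K_0$ with $\nu_k=\nu_*$ for all $k\geq K_0$. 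For such $k$ the second bullet of Step~3 cannot fire either (it would force a $\nu$-decrease at the next inner iteration), so every inner iteration passes the test in Step~3, i.e. $\|H_k^{-1}G_{k,l}\lambda_{k,l}\|\geq\nu_*$ with $\Delta_{k,l}<+\infty$, and the outer iteration ends only through the ``otherwise'' branch of Step~5, where $\text{Ared}_{k,\overline{l}_k}>\rho\,\text{Pred}_{k,\overline{l}_k}$. By Assumption~\ref{assump1} this gives $\|G_{k,l}\lambda_{k,l}\|\geq\underline\varsigma\|H_k^{-1}G_{k,l}\lambda_{k,l}\|\geq\underline\varsigma\nu_*=:c>0$ at every inner iteration with $k\geq K_0$.

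Next I would extract a strict monotone decrease of $f$. Feasibility of $(d,z)=(0,\max_i(\tilde f_{k,l})_i)$ in \eqref{eq:qp-max} shows $z_{k,l}+\tfrac12 d_{k,l}^TH_kd_{k,l}\leq\max_i(\tilde f_{k,l})_i$, whence $\text{Pred}_{k,l}\geq0$; together with the acceptance rule this yields $\text{Ared}_{k,\overline{l}_k}>\rho\,\text{Pred}_{k,\overline{l}_k}\geq0$, so $f(x_{k+1})<f(x_k)$ for all $k\geq K_0$. As $f$ is continuous and $\overline x$ is a cluster point, $f(x_k)\to f(\overline x)$ along a subsequence, and monotonicity upgrades this to $f(x_k)\downarrow f(\overline x)$ for the whole tail. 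Summing the decreases, $\sum_{k\geq K_0}\text{Ared}_{k,\overline{l}_k}=f(x_{K_0})-f(\overline x)<\infty$, so $\text{Ared}_{k,\overline{l}_k}\to0$ and therefore $\text{Pred}_{k,\overline{l}_k}\to0$.

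The contradiction then follows from a dichotomy on the accepted trust-region radius, which is bounded above by $\Delta_{k,\overline{l}_k}\leq\Delta_{k,0}=\gamma_\Delta\nu_*$. If $\liminf_{k\to\infty}\Delta_{k,\overline{l}_k}>0$, then along a subsequence $\text{Pred}_{k,\overline{l}_k}/\Delta_{k,\overline{l}_k}\to0$, and Lemma~\ref{lemma:norm-conv} forces $\|G_{k,\overline{l}_k}\lambda_{k,\overline{l}_k}\|\to0$, contradicting the lower bound $c>0$. The remaining case, $\Delta_{k,\overline{l}_k}\to0$, is the \emph{main obstacle}: since $\Delta$ resets to $\gamma_\Delta\nu_*$ at each outer iteration and only shrinks by $\theta$ per unsuccessful inner step, this forces $\overline{l}_k\to\infty$, i.e. arbitrarily many unsuccessful inner iterations at radii $\Delta_{k,l}/\theta\to0$ and sampling balls $\set B(x_k,(\epsilon_{k,l})^{\sigma_k})$ of vanishing radius. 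I would rule this out in a probability-one sense by reproducing the volume estimate of Lemma~\ref{Lemma:l_k}: as the sampling radius shrinks, a sample satisfying hypothesis~$iii)$ of Lemma~\ref{lemma:bound} (built from the sets $\set S_j$ of Lemma~\ref{lemma:enough-points}) is drawn with probability bounded below independently of $l$. Such a good sample forces a successful step when $x_k$ is nonstationary (Lemma~\ref{lemma:bound}), and, when $x_k$ is near-stationary, drives $\|G_{k,l}\lambda_{k,l}\|$ below $c$ and thereby triggers the $\nu$-decreasing branch of Step~3; either outcome contradicts the standing situation $k\geq K_0$. Hence persistent inner failure has probability zero, and in both cases we contradict $\nu_*>0$, so $\nu_*=0$ with probability one.

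The delicate points I expect to grind are the self-referential coupling—$\nu_*$ simultaneously fixes the sampling radius $\epsilon_{k,0}=\gamma_\epsilon\nu_*$ and the trust region $\Delta_{k,0}=\gamma_\Delta\nu_*$, yet is precisely the quantity we must drive to zero—and making the good-sampling volume estimate uniform enough to conclude in the regime $\Delta_{k,\overline{l}_k}\to0$, where Lemma~\ref{lemma:norm-conv} is not directly applicable.
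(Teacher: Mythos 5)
Your reduction to the two cases is sound up to that point: the monotonicity of $\{\nu_k\}$, the observation that the second bullet of Step~3 cannot fire once $\nu_k$ has stabilized at $\nu_*>0$, the nonnegativity of $\text{Pred}_{k,l}$ via the feasible point $(0,\max_i(\tilde f_{k,l})_i)$, the telescoping argument giving $\text{Pred}_{k,\overline{l}_k}\to 0$, and the disposal of the case $\liminf_k\Delta_{k,\overline{l}_k}>0$ via Lemma~\ref{lemma:norm-conv} are all correct. The genuine gap is your second case, $\Delta_{k,\overline{l}_k}\to 0$ (equivalently $\overline{l}_k\to\infty$), which you propose to rule out by ``reproducing the volume estimate of Lemma~\ref{Lemma:l_k}.'' That estimate cannot be made uniform here. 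Lemma~\ref{lemma:bound} and Lemma~\ref{Lemma:l_k} are built on a separating hyperplane for $\overline{\partial}f$ at a \emph{nonstationary} reference point, and their constants ($\tau$, $\overline{\Delta}$, $\overline{\delta}$) degenerate as the reference point approaches the stationary cluster point $\overline{x}$ of the present lemma; so the ``good sample forces a successful step'' branch has no $k$-uniform probability lower bound. The complementary branch, ``a good sample near a near-stationary $x_k$ drives $\|H_k^{-1}G_{k,l}\lambda_{k,l}\|$ below $\nu_*$,'' is asserted but not established, and it is false for the notion of good sample you use: a sample with one point in each $\set S_j(x_k,(\epsilon_{k,l})^{\sigma_k})$, $j\in\set I(x_k)$, only captures the gradients of the pieces active at $x_k$, and $\set I(x_k)$ can be a proper subset of $\set I(\overline{x})$; moreover, once $(\epsilon_{k,l})^{\sigma_k}$ has shrunk far below the distance from $x_k$ to the other active manifolds of $\overline{x}$, no sample in the ball can see the missing gradients, so no convex combination of sampled gradients is small no matter how the points fall. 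Your own closing remark identifies this regime as the obstacle, but the proposal does not overcome it.

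The paper avoids the obstacle entirely by arguing at the \emph{first} inner iteration $l=0$ of each outer iteration $k$ in a subsequence with $x_k\to\overline{x}$, where the sampling radius $(\epsilon_{k,0})^{\sigma_k}=(\gamma_\epsilon\overline{\nu})^{\sigma_k}$ is bounded away from zero. Since $0\in\overline{\partial}f(\overline{x})$, one can fix open sets $\set X_1,\dots,\set X_m\subset\set B(\overline{x},\delta_1)$ and a fixed $\overline{\lambda}\ge 0$ with $e^T\overline{\lambda}=1$ such that $\|\sum_j\overline{\lambda}_j\nabla f(x_j)\|\le\delta_2$ for every choice $x_j\in\set X_j$; these sets fit inside $\set B(x_k,(\epsilon_{k,0})^{\sigma_k})$ for large $k$ in the subsequence, so the event of sampling in $\set X_1\times\cdots\times\set X_m$ has a probability bounded below uniformly in $k$. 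Plugging $\overline{\lambda}$ into the dual of \eqref{eq:qp-max} (weak duality) shows that such a sample forces $\text{Pred}_{k,0}/\Delta_{k,0}\le\mu/2$, contradicting the bound $\text{Pred}_{k,0}/\Delta_{k,0}\ge\mu$ that Lemma~\ref{lemma:norm-conv} (applied at $l=0$) extracts from $\nu_k\not\to 0$. No analysis of the last inner iteration, the trust-region dynamics, or shrinking sampling radii is needed. To repair your proof you would either have to supply a genuinely new uniform estimate for the regime $\overline{l}_k\to\infty$ near a stationary point, or switch to the paper's $l=0$ device.
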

\begin{proof}
By hypothesis, we have that $0\in \overline\partial f(\overline{x})$. Moreover, all the functions that comprise $f$ are of class $C^2$. So, it is possible to find, for any given $\delta_1,\delta_2>0$, nonempty and open sets $\set X_1,\hdots,\set X_m \subset \set D$ and a fixed vector $\overline\lambda\in \mathbb{R}^m$ satisfying $\overline\lambda \geq 0$ and $e^T\overline\lambda = 1$ such that
\begin{equation}\label{eq:nu-zero1} 
\set X_j \subset \set B(\overline{x},\delta_1)\text{, for all }j\in \{1,\hdots,m\}\text,
\end{equation} 
and
\begin{equation}\label{eq:nu-zero2}
\left\|\sum_{j=1}^m\overline\lambda_j\nabla f(x_j)\right\| \leq \delta_2\text{, for all}~ (x_1,\hdots,x_m)\in \set X_1\times\cdots\times\set X_m\text. 
\end{equation}

By contradiction, let us assume that $\{\nu_k\}$ does not go to zero, i.e., there exists $\overline\nu > 0$ such that $\nu_k = \overline\nu$ for all $k\in \mathbb{N}$ sufficiently large. This condition yields that $\epsilon_{k,0} = \gamma_\epsilon\overline\nu$ and $\Delta_{k,0} = \gamma_\Delta\overline\nu$ for all $k\in \mathbb{N}$ large enough. Moreover, noticing that Lemma~\ref{lemma:norm-conv} also holds if we consider the inner iteration $0$ instead of $\overline{l}_k$, we have that
\begin{equation}\label{eq:aux-Pred}
\frac{\text{Pred}_{k,0}}{\Delta_{k,0}} \geq \mu\text,
\end{equation}
for some $\mu > 0$. Otherwise, $\|G_{k,0}\lambda_{k,0}\|$ would go to zero, implying that $\nu_k$ would also go to zero.

Defining $\set K$ as an infinite index set such that $\{x_k\}_{k\in \set K}$ converges to $\overline{x}$, it is possible to find $\delta_1$ small enough such that~\eqref{eq:nu-zero1} holds and
\begin{equation}\label{eq:nu-zero3}
\set{X}_j \subset \set B(x_k,(\epsilon_{k,0})^{\sigma_k})\text{, for all }j\in \{1,\hdots,m\}\text{ and }k\in \set K\text{ large enough}\text.
\end{equation}
So, let us suppose that for some $k\in \set K$ sufficiently large, we have $x_{k,j}^0 \in \set X_j$, for all $j\in \{1,\hdots,m\}$. Then, considering $\lambda_{k,0}$ and $\omega_{k,0}$ the solutions obtained at Step 2, we must have
\begin{equation*}
\begin{split}
\overline\lambda^T\tilde f_{k,0} - \frac{1}{2}\overline\lambda^TG_{k,0}H_k^{-1}G_{k,0}\overline\lambda \leq{} & \lambda_{k,0}^T\tilde{f}_{k,0} \\
& - \frac{1}{2}\left(G_{k,0}\lambda_{k,0} + \omega_{k,0}\right)^TH_k^{-1}\left(G_{k,0}\lambda_{k,0} + \omega_{k,0}\right)\\
& - \Delta_{k,0}\|\omega_{k,0}\|_1\text.
\end{split}
\end{equation*} 
Adding 
\begin{equation*}
\max_{1\leq j\leq m}\left\{\left(\tilde f_{k,0}\right)_j\right\} 
\end{equation*}
to both sides of the inequality that comes from multiplying the previous one by $(-1)$ and considering the strong duality of the quadratic problem that is solved in Step 2, we have
\begin{equation*}
\text{Pred}_{k,0} \leq \max_{1\leq j\leq m}\left\{\left(\tilde f_{k,0}\right)_j\right\} - \overline\lambda^T\tilde f_{k,0} + \frac{1}{2}\overline\lambda^TG_{k,0}H_k^{-1}G_{k,0}\overline\lambda\text.
\end{equation*}
Since $f(\overline{x}) = \phi_i(\overline{x})$, for any $i\in \set I(\overline x)$, it is possible to select a sufficiently small $\delta_1$, such that~\eqref{eq:nu-zero1}, \eqref{eq:nu-zero3} and
\begin{equation*}
\left|\max_{1\leq j\leq m}\left\{\left(\tilde f_{k,0}\right)_j\right\} - \overline\lambda^T\tilde f_{k,0}\right| \leq \mu\frac{\gamma_\Delta\overline\nu}{4}\text,
\end{equation*} 
are valid for any $k\in \set K$ large enough. Moreover, by Assumption~\ref{assump1}, it is possible to choose $\delta_2$ sufficiently small such that~\eqref{eq:nu-zero2} holds and 
\begin{equation*}
\frac{1}{2}\overline\lambda^TG_{k,0}H_k^{-1}G_{k,0}\overline\lambda \leq \mu\frac{\gamma_\Delta\overline\nu}{4}\text,
\end{equation*}
for any $k\in \set K$ large enough.

As a result, there are $\delta_1,\delta_2 > 0$ sufficiently small and $k\in \set K$ sufficiently large, such that, if $x_{k,j}^0\in \set X_j$, for all $j\in \{1,\hdots,m\}$, we have
\begin{equation*}
\frac{\text{Pred}_{k,0}}{\Delta_{k,0}} \leq \mu\frac{\gamma_\Delta\overline\nu}{2\Delta_{k,0}} = \frac{\mu}{2}\text.
\end{equation*}

Since we have supposed that $\nu_k$ does not go to zero, it implies that GraFuS never samples in the nonempty and open set $\set X_1\times\cdots\times\set X_m$ during the iterations $k\in \set K$, since, otherwise, we would have a contradiction with~\eqref{eq:aux-Pred}. This is an event that has probability zero to occur. Therefore, with probability one, the sequence $\{\nu_k\}$ must converge to zero.   
\end{proof}

Now, we present the main result of this subsection. Using the result below, we can prove the global convergence of GraFuS.
\begin{theorem}\label{theo:global}
Under Assumptions~\ref{assump1} and~\ref{assump:affine}, suppose that $f$ has bounded level sets and GraFuS produces an infinite sequence $\{x_k\}$ with $\nu_\text{opt} = 0$. Then, with probability one, the sequence $\{\nu_k\}$ converges to zero.
\end{theorem}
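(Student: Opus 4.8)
The plan is to argue by contradiction, exploiting the monotonicity of the optimality certificate. First I would note that $\nu_k$ changes only in the first bullet of Step~3, where it is multiplied by a factor at most $\delta<1$, and is kept fixed whenever a step is accepted in Step~5; hence $\{\nu_k\}$ is nonincreasing and bounded below by $\nu_\text{opt}=0$, so it converges to some $\bar\nu\ge 0$. If the certificate were updated infinitely often, the geometric factor $\delta$ would already force $\bar\nu=0$. It therefore suffices to treat the case in which $\nu_k=\bar\nu$ for all $k\ge K_0$, and to assume, for contradiction, that $\bar\nu>0$.

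In this regime every outer iteration is completed through the acceptance test of Step~5, so $\text{Ared}_{k,\overline{l}_k}>\rho\,\text{Pred}_{k,\overline{l}_k}>0$ and $f(x_{k+1})<f(x_k)$ for all $k\ge K_0$. Since $f$ has bounded level sets, the iterates stay in the compact set $\{x:f(x)\le f(x_{K_0})\}$, so $f$ is bounded below along the sequence and the decreasing values $\{f(x_k)\}$ converge; in particular $\text{Ared}_{k,\overline{l}_k}\to 0$, and compactness yields a cluster point $\overline{x}$ along some infinite $\set K\subset\mathbb{N}$. Because every accepted iteration passes the test of Step~3, we have $\|H_k^{-1}G_{k,\overline{l}_k}\lambda_{k,\overline{l}_k}\|\ge\bar\nu>0$, whence $\|G_{k,\overline{l}_k}\lambda_{k,\overline{l}_k}\|$ is bounded away from zero (Assumption~\ref{assump1}). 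The contrapositive of Lemma~\ref{lemma:norm-conv} then forbids $\text{Pred}_{k,\overline{l}_k}/\Delta_{k,\overline{l}_k}\to 0$, so there is $\mu>0$ with $\text{Pred}_{k,\overline{l}_k}\ge\mu\,\Delta_{k,\overline{l}_k}$ for large $k$; combined with $\text{Ared}_{k,\overline{l}_k}>\rho\,\text{Pred}_{k,\overline{l}_k}$ and $\text{Ared}_{k,\overline{l}_k}\to 0$, this forces $\Delta_{k,\overline{l}_k}\to 0$ along $\set K$.

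I would finish with a dichotomy on $\overline{x}$. If $\overline{x}$ is stationary, Lemma~\ref{lemma:nu-zero} applies directly and yields $\nu_k\to 0$, contradicting $\bar\nu>0$. If $\overline{x}$ is not stationary, I invoke Lemma~\ref{lemma:bound} to obtain $\overline{\Delta},\overline{\delta}>0$ such that, for $x_k\in\set B(\overline{x},\overline{\delta})$ (true for large $k\in\set K$), any inner iteration with $\Delta_{k,l}<\overline{\Delta}$ and a good sampling produces an accepted step. Since $\Delta_{k,0}=\gamma_\Delta\bar\nu$ is fixed and the inner loop shrinks $\Delta$ by $\theta$, the first index $l^\ast_k$ with $\Delta_{k,l}<\overline{\Delta}$ satisfies $\Delta_{k,l^\ast_k}\ge\underline\Delta:=\min\{\gamma_\Delta\bar\nu,\theta\overline{\Delta}\}>0$. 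Using the volume estimate from the proof of Lemma~\ref{Lemma:l_k}, a good sampling occurs at any prescribed inner iteration with conditional probability at least some $p>0$, and, reasoning as there, the event $B_k:=\{\Delta_{k,\overline{l}_k}\ge\underline\Delta\}$ has conditional probability at least $p$ given the history up to iteration~$k$ (either the step is accepted before $\Delta$ falls below $\overline{\Delta}$, with $\Delta\ge\overline{\Delta}$, or it is accepted at $l^\ast_k$ on a good sampling). A conditional Borel--Cantelli argument then shows $B_k$ occurs for infinitely many $k\in\set K$ with probability one, so $\Delta_{k,\overline{l}_k}\not\to 0$, contradicting the previous paragraph. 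Both branches refute $\bar\nu>0$, giving $\nu_k\to 0$ with probability one.

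I expect the delicate point to be the nonstationary branch: one must control the randomized trust-region inner loop so as to guarantee a \emph{uniform} positive lower bound $\underline\Delta$ on the radius at the accepted step with conditional probability bounded away from zero, and then upgrade this to an almost-sure infinitely-often statement through a conditional Borel--Cantelli inequality, while carefully handling both the $\Delta_{k,l+1}=+\infty$ reset in the second bullet of Step~3 and the dependence of $x_k$ on the past sampling.
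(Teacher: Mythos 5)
Your proof is correct and follows essentially the same route as the paper's: the same ingredients (monotonicity of $\nu_k$, Lemma~\ref{lemma:norm-conv} yielding $\text{Pred}_{k,\overline{l}_k}\geq\mu\Delta_{k,\overline{l}_k}$, bounded level sets forcing the accepted decreases to vanish, Lemma~\ref{lemma:nu-zero} for a stationary cluster point, and Lemma~\ref{lemma:bound} plus the positive-probability good-sampling estimate for a nonstationary one) are deployed in the same way, merely reorganized as a dichotomy on stationarity of the cluster point rather than on whether $\epsilon_{k,\overline{l}_k}$ stays bounded away from zero. The delicate point you flag at the end is handled in the paper exactly as you anticipate: once $\nu_k$ stalls, $\Delta_{k,0}=\gamma_\Delta\overline{\nu}$ is constant, so the first inner index $l'$ with $\Delta_{k,l'}<\overline{\Delta}$ is the same for all large $k$, and the per-iteration failure probabilities, each bounded away from one, multiply to zero.
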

\begin{proof}
We split the proof in two complementary cases:
\begin{itemize}
\item[$i)$]{There are an infinite set of indices $\set K_1\subset \mathbb{N}$ and a real number $\overline{\epsilon}>0$ such that $\epsilon_{k,\overline{l}_k}\geq \overline{\epsilon}$ for all $k\in \set K_1$.}
\item[$ii)$]{The sampling radius along the iterations satisfy $\displaystyle\epsilon_{k,\overline{l}_k} \underset{k\in \mathbb{N}}{\rightarrow} 0$.}
\end{itemize}
Initially, let us suppose that case $i)$ holds. So, noticing that $\epsilon_{k,\overline{l}_k}\leq \gamma_\epsilon\nu_k$, for all $k\in \mathbb{N}$, and that $\{\nu_k\}$ is a monotonically decreasing sequence, we see clearly that there must exist $\overline{\nu}$ such that $\nu_k\geq \overline{\nu}$, for all $k\in \mathbb{N}$. Therefore, by the way GraFuS was designed, it means that for a sufficiently large index $k$ and any inner iteration $l$, the inequality
\begin{equation*}
\|H_k^{-1}G_{k,l}\lambda_{k,l}\| < \nu_k
\end{equation*}
will never hold, and consequently, the sequence generated by the values $f(x_k)$ will decrease monotonically.  Additionally, we claim that there exists $\mu>0$ such that $\Delta_{k,\overline{l}_k}\mu \leq \text{Pred}_{k,\overline{l}_k}$, for all $k\in \mathbb{N}$. Indeed, if this statement were false, there would exist an infinite set of indices $\tilde{\mathcal K}$ such that 
\begin{equation*}
\text{Pred}_{k,\overline{l}_k}/\Delta_{k,\overline{l}_k}\underset{k\in \tilde{\mathcal K}}{\rightarrow} 0\text.
\end{equation*}
However, by Lemma~\ref{lemma:norm-conv}, it would yield that
\begin{equation*}
\|G_{k,\overline{l}_k}\lambda_{k,\overline{l}_k}\|\underset{k\in \tilde{\mathcal K}}{\rightarrow} 0\text.
\end{equation*}
Therefore, we would have $\nu_k\rightarrow 0$, and consequently, that $\epsilon_{k,\overline{l}_k}\rightarrow 0$, which is a contradiction with case $i)$. Thus, there must exist $\mu>0$ such that $\Delta_{k,\overline{l}_k}\mu \leq \text{Pred}_{k,\overline{l}_k}$, for all $k\in \mathbb{N}$ sufficiently large. Moreover, since 
\begin{equation*}
\epsilon_{k,l} = \frac{\gamma_\epsilon}{\gamma_\Delta}\Delta_{k,l}\text,~~\text{for all } k,l\in \mathbb{N}\text,
\end{equation*}
we see that
$\Delta_{k,\overline{l}_k} \geq (\gamma_\Delta/\gamma_\epsilon) \overline{\epsilon}$, for all $k \in \set K_1$. Consequently, since we have
\begin{equation*}
\text{Ared}_{k,\overline{l}_k} > \rho \text{Pred}_{k,\overline{l}_k}\text,~~\text{for all } k\in \set K_1\text{ sufficiently large,} 
\end{equation*}
we obtain
\begin{equation}\label{eq:f-bound}
f(x_k) - f(x_{k+1}) > \rho\mu\frac{\gamma_\Delta}{\gamma_\epsilon}\overline{\epsilon}\text,~~\text{for all } k\in \set K_1\text{ sufficiently large.} 
\end{equation}
Now, since $f$ has bounded level sets, there must exist an infinite set of indices $\set K_2\subset \set K_1$ such that 
\begin{equation*}
x_k\underset{k\in \set K_2}{\rightarrow} \hat x\text,~~\text{for some~}\hat x\in \mathbb{R}^n\text.
\end{equation*}
So, considering $s_{\set K_2}(k)$ as the index in $\set K_2$ that comes right after $k\in \set K_2$ and recalling that, for the case at hand, it is possible to find a sufficiently large $\hat k\in \set K_2$, where the sequence of function values will be a decreasing sequence for all $k\in \mathbb{N}$ and $k \geq \hat k$, it yields that
\begin{equation*}
\begin{split}
\sum_{k\in \set K_2,k\geq \hat k} \left( f(x_k) - f(x_{k+1}) \right) & \leq  \sum_{k\in \set K_2,k\geq \hat k} \left( f(x_k) - f\left(x_{s_{\set K_2}(k)}\right) \right)\\
& = f\left(x_{\hat k}\right) - f(\hat{x}) <  \infty\text.
\end{split}
\end{equation*}
However, this is a relation that goes against~\eqref{eq:f-bound}. Therefore, the case $i)$ is an impossible event and we must consider case $ii)$.

So, suppose that case $ii)$ holds and, by contradiction, that the sequence $\{\nu_k\}$ does not converge to zero. Again, we must have that the inequality
\begin{equation*}
\|H_k^{-1}G_{k,l}\lambda_{k,l}\| < \nu_k
\end{equation*}
will never hold for $k$ sufficiently large, and consequently, the sequence generated by the values $f(x_k)$ will decrease monotonically. Thus, there must exist at least one cluster point $\overline{x}$ of $\{x_k\}$. Consequently, there is $\tilde{\set K}\subset \mathbb{N}$ such that   
\begin{equation*}
x_k\underset{k\in \tilde{\set K}}{\rightarrow} \overline x\text.
\end{equation*} 
Now, because of Lemma~\ref{lemma:nu-zero}, $\overline{x}$ is not a stationary point for $f$. Then, we choose $\overline\delta,\overline\Delta$ as presented in Lemma~\ref{lemma:bound} for the point $\overline{x}$. Since $\nu_k$ remains bounded away from zero by our assumption and $\epsilon_{k,\overline{l}_k}\rightarrow 0$, we have, by the way we have designed GraFuS, that $\epsilon_{k,\overline{l}_k}$ just keeps going smaller because $\overline{l}_k\rightarrow \infty$. As a consequence, there exist $k',l'\in \mathbb{N}$ such that for all $k\geq k'$ we have 
\begin{equation*}
\Delta_{k,l'} = \tilde\Delta := \left(\theta^{l'}\right)\gamma_\Delta\nu_k < \overline{\Delta}~~\text{and}~~ \epsilon_{k,l'} = \tilde\epsilon:= \left(\theta^{l'}\right) \gamma_\epsilon\nu_k = \frac{\gamma_\epsilon}{\gamma_\Delta}\tilde{\Delta}\text.
\end{equation*}
Moreover, since $\overline{x}$ is a cluster point for the sequence of iterates, we can find $\hat k \geq k'$ such that for all $k\geq \hat k$ and $k\in \tilde{\set K}$, we have 
\begin{equation*}
x_k\in \set B(\overline{x},\min\{\tilde\epsilon^2,\overline\delta\}/4) \subset \set B(\overline{x},\min\{\tilde\epsilon^{\sigma_k},\overline\delta\}/4)
\end{equation*}

So, for all $j\in \set I(\overline{x})$, we have    
\begin{equation*}
x_k\in \set B\left(\overline{x},\min\left\{\tilde\epsilon^{\sigma_k},\overline\delta\right\}/4\right)\text{ and }\set S_j(\overline x,\min\{\tilde\epsilon,\overline\delta\}/4)\subset \set B\left(x_k,\frac{\gamma_\epsilon}{\gamma_\Delta}\tilde\Delta\right)\text.
\end{equation*}
Therefore, the hypotheses $i)$, $ii)$ and $iii)$ item $a)$ of Lemma~\ref{lemma:bound} are all satisfied. Thus, since $\overline{l}_k\rightarrow \infty$, we must have that items $b)$ and/or $c)$ of hypothesis $iii)$ are not satisfied for every $k\geq \hat k$ and $l = l'$. However, this is an event with probability zero of happening, since the sets $\set S_j(\overline x,\min\{\tilde\epsilon,\overline\delta\}/4)$ are open and not empty. Consequently, with probability one, the sequence $\{\nu_k\}$ must converge to zero.
\end{proof}

In the light of the above theorem, the next corollary ensures that GraFuS will find, in a finite number of iterations, an $\epsilon$-stationary point under any given tolerance. Furthermore, it justifies calling $\nu_k$ an optimality certificate.

\begin{corollary}
Under Assumptions~\ref{assump1} and~\ref{assump:affine}, suppose that $f$ has bounded level sets and the parameter value $\nu_\text{opt}$ in GraFuS is strictly positive. Then, with probability one, GraFuS terminates in a finite number of iterations. Moreover, there exists $v\in \mathbb{R}^n$ such that 
\begin{equation*}
v\in \overline\partial_{\tilde\epsilon} f\left(x_{\hat{k}}\right)~~\text{with}~~ \|v\| \leq \tilde\nu :=  \overline{\varsigma}\cdot\nu_{\hat k}\text,
\end{equation*} 
where $\hat{k}$ is the final iteration of GraFuS, $\tilde\epsilon := \gamma_\epsilon\nu_{\hat k}$ and $\overline{\varsigma}$ is the constant presented in Assumption~\ref{assump1}. In other words, $x_{\hat{k}}$ is an $\tilde\epsilon$-stationary point under the tolerance $\tilde\nu$.
\end{corollary}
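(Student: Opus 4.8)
The plan is to obtain finite termination as a direct consequence of Theorem~\ref{theo:global} and then, at the terminal iterate, to exhibit the certifying $\tilde\epsilon$-subgradient explicitly from the dual solution of~\eqref{eq:qp-max}.

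I would first establish finite termination. The crucial observation is that the value $\nu_\text{opt}$ enters GraFuS \emph{only} through the stopping test ``$\nu_{k+1} < \nu_\text{opt}$'' at Step~6; hence the iterates $\{x_k\}$ and the certificates $\{\nu_k\}$ produced before termination are independent of $\nu_\text{opt}$. Arguing by contradiction, suppose that with positive probability GraFuS with the given $\nu_\text{opt} > 0$ runs forever. Feeding the identical stream of random samples into the method with $\nu_\text{opt} = 0$ produces exactly the same infinite sequence, since for $\nu_\text{opt}=0$ the test never fires (the certificates are positive). Theorem~\ref{theo:global} applies to that infinite run and gives $\nu_k \to 0$ with probability one. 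But non-termination of the original run forces $\nu_{k+1} \ge \nu_\text{opt} > 0$ for every $k$, contradicting $\nu_k\to 0$. Therefore, with probability one, the stopping test is triggered after finitely many iterations, giving a final outer index $\hat k$ with $x_{\hat k}$ the last iterate and $\epsilon_{\hat k,0} = \gamma_\epsilon\nu_{\hat k} = \tilde\epsilon$.

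Next I would build the certificate. The only route to Step~6 is through the first bullet of Step~3, so I would locate an inner index $l^*$ at iteration $\hat k$ with $\|H_{\hat k}^{-1}G_{\hat k,l^*}\lambda_{\hat k,l^*}\| \le \nu_{\hat k}$: either the trust region is finite and inactive, in which case $d_{\hat k,l^*} = -H_{\hat k}^{-1}G_{\hat k,l^*}\lambda_{\hat k,l^*}$ (the dual trust-region variable vanishes) and the Step~3 entry condition yields the bound directly; or the trust region was just released to $+\infty$, in which case the preceding inner iteration triggered the release, already satisfied the bound, and carried along the same gradients $G_{\hat k,l^*} = G_{\hat k,l^*+1}$, so its dual pair may be used. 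Set $v := G_{\hat k,l^*}\lambda_{\hat k,l^*}$. Since $\lambda_{\hat k,l^*}\ge 0$ and $e^T\lambda_{\hat k,l^*} = 1$, the vector $v$ is a convex combination of the gradients $\nabla f(x_{\hat k,i}^{l^*})$ at the sampled points, all of which lie in $\set D$ (Step~1 did not stop). These points were drawn in $\set B(x_{\hat k},(\epsilon_{\hat k,l^*})^{\sigma_{\hat k}})$, and because $\sigma_{\hat k}\ge 1$ and $\epsilon_{\hat k,l^*}\le \epsilon_{\hat k,0}\le 1$, the sampling radius satisfies $(\epsilon_{\hat k,l^*})^{\sigma_{\hat k}} \le \epsilon_{\hat k,l^*} \le \epsilon_{\hat k,0} = \tilde\epsilon$, so every sample lies in $\set B(x_{\hat k},\tilde\epsilon)$. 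Hence each $\nabla f(x_{\hat k,i}^{l^*}) \in \overline\partial f(\set B(x_{\hat k},\tilde\epsilon))$, and by the definition of the $\tilde\epsilon$-subdifferential as the convex hull of this set we conclude $v \in \overline\partial_{\tilde\epsilon} f(x_{\hat k})$.

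Finally I would bound the norm. Writing $v = H_{\hat k}\bigl(H_{\hat k}^{-1}G_{\hat k,l^*}\lambda_{\hat k,l^*}\bigr)$ and using Assumption~\ref{assump1}, which confines the spectrum of $H_{\hat k}$ to $[\underline\varsigma,\overline\varsigma]$ and thus gives $\|H_{\hat k}\| \le \overline\varsigma$, I obtain $\|v\| \le \overline\varsigma\,\|H_{\hat k}^{-1}G_{\hat k,l^*}\lambda_{\hat k,l^*}\| \le \overline\varsigma\,\nu_{\hat k} = \tilde\nu$. Together with the membership above, this certifies $x_{\hat k}$ as an $\tilde\epsilon$-stationary point under tolerance $\tilde\nu$. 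I expect the main obstacle to be the Step~3 bookkeeping for the terminal iteration: one must carefully separate the finite inactive-trust-region case from the just-released $\Delta = +\infty$ case to guarantee that a dual pair with scaled norm below $\nu_{\hat k}$ is genuinely available, and confirm that the samples carried along retain their membership in the $\tilde\epsilon$-ball. Everything beyond this is a routine application of Assumption~\ref{assump1} and the definition of $\overline\partial_{\tilde\epsilon} f$.
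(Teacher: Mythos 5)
Your proof is correct and fleshes out exactly what the paper leaves implicit: the published proof of this corollary is a one-line appeal to Theorem~\ref{theo:global}, Assumption~\ref{assump1} and ``the way GraFuS was designed,'' and your two steps --- finite termination by contradiction with $\nu_k\to 0$, then reading the certificate off the dual pair at the terminal Step~3 branch, including the separate bookkeeping for the released trust region $\Delta=+\infty$ --- are precisely the intended argument, with the norm bound $\|v\|\le\overline\varsigma\,\|H_{\hat k}^{-1}v\|$ coming from the spectral bound in Assumption~\ref{assump1} as you say. The one loose stitch is the assertion $\epsilon_{\hat k,0}\le 1$: since $\epsilon_{\hat k,0}=\gamma_\epsilon\nu_{\hat k}$ and $\gamma_\epsilon$ may exceed $1$ (the paper's own experiments use $\gamma_\epsilon=4$), the chain $(\epsilon_{\hat k,l^*})^{\sigma_{\hat k}}\le\epsilon_{\hat k,l^*}\le\tilde\epsilon$ is not automatic when $\sigma_{\hat k}>1$; it requires either $\gamma_\epsilon\nu_{\hat k}\le 1$, or $\sigma_{\hat k}=1$, or reading $\tilde\epsilon$ as $(\gamma_\epsilon\nu_{\hat k})^{\sigma_{\hat k}}$. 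This is an imprecision inherited from the corollary's statement rather than a flaw in your strategy, but it should be stated as a hypothesis rather than silently assumed.
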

\begin{proof}
The proof follows immediately from Theorem~\ref{theo:global}, Assumption~\ref{assump1} and by the way GraFuS was designed.
\end{proof}

The next result guarantees that if the sequence $\{x_k\}$ produced by GraFuS is bounded, then we also obtain an asymptotic result.
\begin{corollary}
Under Assumptions~\ref{assump1} and~\ref{assump:affine}, suppose that GraFuS produces an infinite and bounded sequence $\{x_k\}$ with $\nu_\text{opt} = 0$. Then, with probability one, there is at least one cluster point of this sequence such that it is a stationary point for $f$. 
\end{corollary}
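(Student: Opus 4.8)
The plan is to split the argument into two stages: first establish that the optimality certificate satisfies $\nu_k\to 0$ with probability one, and then show that this forces a cluster point of $\{x_k\}$ to be stationary. Since $\{x_k\}$ is bounded, the Bolzano--Weierstrass theorem already supplies the convergent subsequences that the proof of Theorem~\ref{theo:global} extracted from the bounded level set hypothesis (in both case $i)$ and case $ii)$ that assumption is used only to guarantee cluster points of $\{x_k\}$). Hence the reasoning of Theorem~\ref{theo:global} carries over with ``bounded level sets'' replaced by ``$\{x_k\}$ bounded'', and I conclude that, with probability one, $\nu_k\to 0$.

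For the second stage, note that $\{\nu_k\}$ is monotonically non-increasing and that $\nu_{k+1}=\nu_k$ unless the certificate is updated in Step~3; therefore $\nu_k\to 0$ forces the update to be triggered for infinitely many outer indices, which I collect in an infinite set $\set K_d$. At each $k\in\set K_d$ the update is performed in the sub-branch $\|d_{k,l}\|_\infty<\Delta_{k,l}$, which is entered only through the ``Otherwise'' case of Step~3. Consequently there is an inner index $l_k$, with sampled points $x_{k,1}^{l_k},\dots,x_{k,m}^{l_k}$ and associated $G_{k,l_k}$ and multiplier $\lambda_{k,l_k}$, for which $\|H_k^{-1}G_{k,l_k}\lambda_{k,l_k}\|<\nu_k$: this is the inner iteration $\overline{l}_k$ itself when the trust region is finite, and the preceding iteration $\overline{l}_k-1$ when $\Delta$ was reset to $+\infty$, the reset keeping $G_{k,\overline{l}_k}=G_{k,\overline{l}_k-1}$ and not resampling. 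Writing $\tilde\lambda_k:=\lambda_{k,l_k}$ and using Assumption~\ref{assump1} (which gives $\|H_k^{-1}y\|\ge\overline{\varsigma}^{-1}\|y\|$), I obtain $\|G_{k,l_k}\tilde\lambda_k\|<\overline{\varsigma}\,\nu_k$, while the sampling rule yields $\|x_{k,j}^{l_k}-x_k\|\le(\epsilon_{k,l_k})^{\sigma_k}\le(\gamma_\epsilon\nu_k)^{\sigma_k}$ for every $j$.

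I would then pass to the limit. Refine $\set K_d$ to an infinite subset $\set K'$ along which $x_k\to\overline{x}$; this $\overline{x}$ is a cluster point of $\{x_k\}$, and since $\nu_k\to 0$ and $\sigma_k\in[1,2]$ each sampled point obeys $x_{k,j}^{l_k}\to\overline{x}$. Because $f$ is locally Lipschitz, the gradients $\nabla f(x_{k,j}^{l_k})$ stay bounded on a compact neighborhood of $\overline{x}$ and the multipliers $\tilde\lambda_k$ lie in the compact unit simplex, so a further refinement makes $(\tilde\lambda_k)_j\to\hat\lambda_j$ and $\nabla f(x_{k,j}^{l_k})\to g_j$ for each $j$. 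By the definition of the Clarke subdifferential each $g_j\in\overline{\partial}f(\overline{x})$, and convexity of $\overline{\partial}f(\overline{x})$ gives $\sum_{j}\hat\lambda_j g_j\in\overline{\partial}f(\overline{x})$. But $\sum_j\hat\lambda_j g_j=\lim_{k\in\set K'}G_{k,l_k}\tilde\lambda_k=0$, since $\|G_{k,l_k}\tilde\lambda_k\|<\overline{\varsigma}\,\nu_k\to 0$. Thus $0\in\overline{\partial}f(\overline{x})$ and $\overline{x}$ is the desired stationary cluster point.

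I expect the main obstacle to be the bookkeeping in the second stage, at the iterations where $\nu_k$ is lowered. The certificate may be reduced either with a finite trust region or immediately after $\Delta$ has been reset to $+\infty$, and one must check that in the latter case it is still an aggregate of the \emph{actually sampled} gradients---and not the recomputed, possibly larger, unconstrained aggregate---whose norm is controlled by $\overline{\varsigma}\nu_k$; this is what the identities $G_{k,\overline{l}_k}=G_{k,\overline{l}_k-1}$ and the absence of resampling are designed to provide. Once that is secured, the simultaneous vanishing of the aggregate norm and of the sampling radius, together with the closedness and convexity of the Clarke subdifferential, delivers stationarity without further difficulty.
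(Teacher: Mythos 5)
Your proposal is correct and follows essentially the same route as the paper: replace the bounded-level-set hypothesis by boundedness of $\{x_k\}$ to re-run Theorem~\ref{theo:global} and get $\nu_k\to 0$, extract a convergent subsequence along the outer iterations where the certificate is reduced, and use the bound $\|G_{k,l_k}\lambda_{k,l_k}\|\leq\overline{\varsigma}\,\nu_k$ together with the shrinking sampling radius to conclude $0\in\overline{\partial}f(\overline{x})$. The only difference is cosmetic: where the paper invokes the outer-semicontinuity result of Kiwiel (\cite[Lemma 3.2(iii)]{KWL07}), you carry out the compactness/limit argument on the simplex and the sampled gradients by hand, and you make explicit the Step~3 bookkeeping that the paper summarizes as ``by the way GraFuS was designed.''
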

\begin{proof}
The result follows immediately from~Theorem~\ref{theo:global}. Notice that replacing the boundness of the level sets by the boundness of $\{x_k\}$ does not invalidate the proof of Theorem~\ref{theo:global}. Additionally, defining the infinite index set
\begin{equation*}
\set K:= \{k\in \mathbb{N}~|~\nu_{k+1}<\nu_k\}\text,
\end{equation*}
we have that, since $\{x_k\}$ is bounded, there must exist a point $\overline x\in \mathbb{R}^n$ and an infinite index set $\tilde{\set K}\subset \set K$ such that 
\begin{equation*}
x_k\underset{k\in \tilde{\set K}}{\rightarrow} \overline x\text.
\end{equation*} 
Therefore, since $\nu_k\rightarrow 0$ and there exists $v_k\in \mathbb{R}^n$ such that
\begin{equation*}
v_k\in \overline\partial_{(\gamma_\epsilon \nu_k)} f(x_k)~~\text{with}~~\|v_k\| \leq \overline\varsigma\cdot \nu_{k}\text{, for any }k\in \set{\tilde{K}}\text,
\end{equation*}
we have the desired result (see item $iii)$ of \cite[Lemma 3.2]{KWL07}), i.e., $0\in \overline\partial f(\overline x)$ with probability one.  
\end{proof}

In the next subsection, we show that, under a good sampling, the method superlinearly either moves to a local minimizer of $f$ or reduces the optimality certificate. For such a goal, our analysis will involve the concept of $\set U$ and $\set V$ spaces.

\subsection{Local convergence}
In this subsection our efforts will be focused in enlightening the role played by the quadratic programming problem~\eqref{eq:qp-max}. In fact, under special circumstances, it is possible to see this quadratic problem as a local approximation of a new optimization problem that involves the smooth functions $\phi_i$. Under this new perspective, we can analyze the local convergence of the proposed method and obtain interesting results. However, since our method has a random nature and a good local information about the function is restricted to a good set of sampled points, it is reasonable to think that a good rate of convergence will not be achieved at every iteration. Therefore, the results presented here will be sustained on hypotheses that guarantee a good sampling. Additionally, the following definition presents key concepts for our analysis (a more general definition can be found in \cite{LEW02}).
\begin{definition}[$\set U$,$\set V$-spaces]
Suppose that $f:\mathbb{R}^n\rightarrow \mathbb{R}$ is the continuous objective function of problem~\eqref{eq:minimax-prob} and $x$ is any point in $\mathbb{R}^n$. Then, we define 
\begin{equation*}
\set U(x):=\{s\in \mathbb{R}^n~|~[\nabla\phi_i(x)-\nabla\phi_j(x)]^Ts = 0\text,~~\forall i,j\in \set I(x)\text,~~i\neq j\}
\end{equation*}
and $\set V(x):= \set U(x)^\perp$ as the smooth and nonsmooth subspaces of $f$ at $x$, respectively. 
\end{definition}

To accomplish the aim of this subsection, under Assumption~\ref{assump:affine}, we start supposing, without any loss of generality, that
\begin{equation*} 
\set I(x_*) = \{1,\hdots,r+1\}\text{,~~for some~} r\leq n\text.
\end{equation*}    
Moreover, we assume that $x_*\in\mathbb{R}^n$ is a local minimizer of the optimization problem presented in~\eqref{eq:minimax-prob} and that $x_*$ is also a strong minimizer for $f$~\cite[Section 5.1]{MIS05}. 
\begin{assumption}\label{assump:strongmin}
The local minimizer $x_*$ of problem~\eqref{eq:minimax-prob} is a strong minimizer, i.e., $0\in \ri\overline{\partial}f(x_*)$ and there exists $\mu > 0$ such that
\begin{equation}\label{eq:hessian-min}
d^T\left(\sum_{i=1}^{r+1}\left(\lambda_*\right)_i\nabla^2\phi_i(x_*)\right)d \geq \mu\|d\|^2\text{,\quad for all~} d\in \set U(x_*)\text, 
\end{equation}
where $\lambda_*\in \mathbb{R}^{r+1}$ is the unique vector such that
\begin{equation*}
\lambda_* \geq 0\text,~~\sum_{i=1}^{r+1} \left(\lambda_*\right)_i = 1~~\text{and}~~\sum_{i=1}^{r+1}\left(\lambda_*\right)_i\nabla\phi_i(x_*) = 0\text.
\end{equation*}     
\end{assumption}

Below, we present our first technical result that will prove helpful for the subsequent statements.

\begin{lemma}\label{lemma:good-sample}
Suppose that Assumptions~\ref{assump1}, \ref{assump:affine} and~\ref{assump:strongmin} hold and $\{x_k\}$ is an infinite sequence generated by GraFuS with $\nu_k\rightarrow 0$ and $x_k\rightarrow x_*$. Then, there exists an infinite index set $\set K\subset \mathbb{N}$ such that, for any fixed $k\in \set K$, the following holds:
\begin{itemize}
\item[$i)$]{for each $j\in \{1,\hdots,m\}$, there exists $i\in \set I(x_*)$, such that 
\begin{equation*}
\nabla f\left(x_{k,j}^{\overline{l}_k}\right) = \nabla \phi_i\left(x_{k,j}^{\overline{l}_k}\right)\text;
\end{equation*}}
\item[$ii)$]{for each $i\in \set I(x_*)$, there exists $j\in \{1,\hdots,m\}$, such that
\begin{itemize}
\item[$a)$]{
$\nabla f\left(x_{k,j}^{\overline{l}_k}\right) = \nabla \phi_i\left(x_{k,j}^{\overline{l}_k}\right)$;
}
\item[$b)$]{
$\big(\lambda_{k,{\overline{l}_k}}\big)_j>0$, i.e., the constraint
\begin{equation*}
f\left(x_{k,j}^{\overline{l}_k}\right) + \nabla f\left(x_{k,j}^{\overline{l}_k}\right)^T\left(x_k + d - x_{k,j}^{\overline{l}_k}\right) \leq z
\end{equation*}
is active at the optimal solution of the quadratic programming that is solved in Step 2.
}\end{itemize}
}
\end{itemize}
\end{lemma}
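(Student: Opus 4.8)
The plan is to treat the three assertions in increasing order of difficulty: assertion $(i)$ is deterministic and will hold for \emph{all} large $k$, the coverage in item $a)$ of $(ii)$ is a ``good sampling'' event handled exactly by the volume/Borel--Cantelli machinery already set up in Lemma~\ref{Lemma:l_k}, and the genuine content is the dual multiplier positivity in item $b)$ of $(ii)$. First I would record that $\nu_k\to0$ forces the sampling radius $(\epsilon_{k,l})^{\sigma_k}\le(\gamma_\epsilon\nu_k)^{\sigma_k}\to0$, so, together with $x_k\to x_*$, every sampled point eventually lies in an arbitrarily small ball about $x_*$. By the continuity argument at the end of Lemma~\ref{lemma:enough-points}, there is a neighborhood of $x_*$ on which every index outside $\set I(x_*)$ is strictly dominated, hence on which $f=\max_{i\in\set I(x_*)}\phi_i$. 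Since Step~1 keeps the samples in $\set D$ with probability one, $f$ is differentiable at each $x_{k,j}^{\overline{l}_k}$ with a unique active index, which must lie in $\set I(x_*)$; this yields $\nabla f(x_{k,j}^{\overline{l}_k})=\nabla\phi_i(x_{k,j}^{\overline{l}_k})$ for some $i\in\set I(x_*)$, establishing $(i)$, and in particular item $a)$ of $(ii)$, for all large $k$.

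Next I would fix the infinite index set $\set K$. For each $i\in\set I(x_*)$ the set $\set S_i(x_*,\cdot)$ of~\eqref{eq:open-sets} is open and nonempty, and the volume estimate in the proof of Lemma~\ref{Lemma:l_k} shows that a single uniform sample lands in it with conditional probability bounded below by a positive constant that stays away from zero as $x_k\to x_*$. Because $m\ge n+1\ge|\set I(x_*)|$ by Lemma~\ref{lemma:enough-points}, the event that every face $i\in\set I(x_*)$ receives at least one sample has conditional probability $\ge c_0>0$, uniformly in large $k$. Since $\{\nu_k\}$ is nonincreasing with $\nu_k\to0$, the set $\set K_0$ of outer iterations at which $\nu$ strictly decreases is infinite; a Borel--Cantelli argument as in Lemma~\ref{Lemma:l_k} then gives coverage at infinitely many $k\in\set K_0$ with probability one, and I would take $\set K$ to be this infinite set. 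On $\set K$, $(i)$ and item $a)$ of $(ii)$ hold and every face is represented.

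The decisive step is item $b)$. For $k\in\set K\subset\set K_0$, a strict decrease of $\nu$ at Step~3 requires $\|d_{k,\overline{l}_k}\|_\infty<\Delta_{k,\overline{l}_k}$, so the trust-region constraint is inactive and complementary slackness forces $\omega_{k,\overline{l}_k}=0$; the same branch requires $\|H_k^{-1}G_{k,\overline{l}_k}\lambda_{k,\overline{l}_k}\|<\nu_k$, whence $\|G_{k,\overline{l}_k}\lambda_{k,\overline{l}_k}\|\le\overline{\varsigma}\,\nu_k\to0$ by Assumption~\ref{assump1}. Grouping the samples by the face they represent, $J_i:=\{j:\nabla f(x_{k,j}^{\overline{l}_k})=\nabla\phi_i(x_{k,j}^{\overline{l}_k})\}$, and setting $\Lambda_i^k:=\sum_{j\in J_i}(\lambda_{k,\overline{l}_k})_j$, I obtain $\Lambda_i^k\ge0$, $\sum_{i\in\set I(x_*)}\Lambda_i^k=1$, and, since the $\phi_i$ are $C^2$ and the samples converge to $x_*$,
\[
G_{k,\overline{l}_k}\lambda_{k,\overline{l}_k}=\sum_{i\in\set I(x_*)}\Lambda_i^k\nabla\phi_i(x_*)+o(1)\longrightarrow 0.
\]
Passing to a subsequence, $\Lambda_i^k\to\Lambda_i^*$ with $\Lambda_i^*\ge0$, $\sum\Lambda_i^*=1$ and $\sum\Lambda_i^*\nabla\phi_i(x_*)=0$; by Remark~\ref{remark:1} (uniqueness of the null convex combination under Assumption~\ref{assump:affine}) this forces $\Lambda_i^*=(\lambda_*)_i$, and by Assumption~\ref{assump:strongmin}, the condition $0\in\ri\overline{\partial}f(x_*)$ together with affine independence gives $(\lambda_*)_i>0$ for every $i\in\set I(x_*)$. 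Hence $\Lambda_i^k>0$ for all $i$ and all large $k\in\set K$, so each face $i$ owns some $j\in J_i$ with $(\lambda_{k,\overline{l}_k})_j>0$, which satisfies both $a)$ and $b)$; discarding finitely many indices keeps $\set K$ infinite.

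I expect item $b)$ to be the main obstacle. The difficulty is that merely sampling every face does not prevent the quadratic program from concentrating all its dual mass on a single face; what rules this out is the vanishing of $G_{k,\overline{l}_k}\lambda_{k,\overline{l}_k}$ on the $\nu$-decreasing iterations, which pins the limiting weights to a solution of the simplex-constrained problem $\sum\Lambda_i^*\nabla\phi_i(x_*)=0$, combined with the uniqueness (Assumption~\ref{assump:affine}) and strict positivity (Assumption~\ref{assump:strongmin}) of that barycentric representation. A secondary technical point worth handling carefully is the clean vanishing of the trust-region multiplier $\omega_{k,\overline{l}_k}$ via complementary slackness on those iterations, since this is precisely what reduces the limiting dual to a pure simplex minimization; together with the coordination between the coverage event and the subsequence $\set K_0$, these are the places where the argument must be made rigorous.
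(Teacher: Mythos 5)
Your proposal reaches the right conclusions and, for item $i)$ and item $b)$ of $ii)$, rests on the same mechanism as the paper: the only convex combination of $\{\nabla\phi_i(x_*)\}_{i\in\set I(x_*)}$ that vanishes is $\lambda_*$, and Assumptions~\ref{assump:affine} and~\ref{assump:strongmin} force $(\lambda_*)_i>0$ for every $i$ --- this is exactly \eqref{eq:0-generate}. Where you diverge is in how $\set K$ is built and how coverage of the faces is obtained. The paper takes $\set K$ to be all sufficiently late iterations at which $\nu$ decreases, as in \eqref{eq:K-set}, and derives \emph{both} parts of $ii)$ deterministically from the smallness of the aggregated gradient $G_{k,\overline l_k}\lambda_{k,\overline l_k}$ on such iterations: if some face of $\set I(x_*)$ carried zero total dual weight, the quadratic term $\tfrac12\lambda^TG^TH_k^{-1}G\lambda$ would stay bounded away from zero, contradicting dual optimality against a feasible multiplier with small aggregated gradient. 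Your direct limit argument on the grouped weights $\Lambda_i^k$ is an equivalent, arguably cleaner, packaging of that step. By contrast, your Borel--Cantelli coverage argument for item $a)$ of $ii)$ is both unnecessary and the weakest link: your own argument for item $b)$ already yields item $a)$ (a face with positive aggregated weight must own at least one sample), whereas the probabilistic route introduces a ``with probability one'' qualifier absent from the statement and requires conditioning the sampling on the hypotheses $\nu_k\rightarrow0$ and $x_k\rightarrow x_*$, which spoils the independence that the volume argument of Lemma~\ref{Lemma:l_k} exploits. Drop it and keep $\set K$ as in \eqref{eq:K-set}.

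One genuine inaccuracy to repair: you assert that the $\nu$-decreasing branch of Step~3 requires $\|H_k^{-1}G_{k,\overline l_k}\lambda_{k,\overline l_k}\|<\nu_k$. It does not in the sub-case where $\Delta_{k,l}$ has been reset to $+\infty$: there the test is bypassed and $\nu$ is decreased regardless of the norm of the final multiplier. What is guaranteed is the existence of \emph{some} feasible $\hat\lambda$ on the simplex (the multiplier from the inner iteration at which the reset occurred, for the same $G_{k,l}$ and $\tilde f_{k,l}$) satisfying $\|H_k^{-1}G_{k,\overline l_k}\hat\lambda\|<\nu_k$; the smallness is then transferred to $\lambda_{k,\overline l_k}$ by comparing dual objective values, using that the entries of $\tilde f_{k,\overline l_k}$ differ by $O(\tau_{k,\overline l_k})\rightarrow0$. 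This is precisely the detour the paper's proof takes, and your $\Lambda_i^k\rightarrow(\lambda_*)_i$ argument goes through unchanged once it is inserted. Finally, state the conclusion of that limit for the whole tail of $\set K$ rather than ``along a subsequence'': since every subsequential limit of $(\Lambda_i^k)_i$ equals $\lambda_*$ and the weights live in a compact simplex, the full sequence converges and positivity holds for all sufficiently large $k\in\set K$.
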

\begin{proof}
We claim that, for a sufficiently large $\overline k\in \mathbb{N}$, the following infinite index set
\begin{equation}\label{eq:K-set}
\set K:=\{k \in \mathbb{N} ~|~ \nu_{k+1} < \nu_{k}\text,~k\geq \overline k \}
\end{equation}
has the required properties. Indeed, recalling that $\epsilon_{k,0}\rightarrow 0$ (since $\nu_k\rightarrow 0$), $x_k\rightarrow x_*$ and $\phi_i$ are all continuous functions, then, for any large outer iteration, there exists $\set W\subset \mathbb{R}^n$ such that
\begin{equation*}
x_{k,j}^l\in \set W\text{, for all~}j\in \{1,\hdots,m\}\text{ and }l\in \mathbb{N}\text,
\end{equation*}
where $\set W$ is a neighborhood of $x_*$ such that only the functions $\phi_i$, with $i\in \set I(x_*)$, assume the maximum in this set, which gives us $i)$. 

Now, by the way we have designed Step 3, there exists $\hat\lambda\in \mathbb{R}^m$ such that
\begin{equation*}
\|H_k^{-1}G_{k,\overline l_k}\hat\lambda\| < \nu_k\text{, for any }k\in \set K\text.
\end{equation*}  
Additionally, because we assume that $0\in \ri\overline\partial f(x_*)$ and Assumption~\ref{assump:affine} holds, it follows, by~\cite[Remark III.2.1.4]{HUL93}, that
\begin{equation}\label{eq:0-generate}
\sum_{i \in \set I(x_*) } \lambda_i\nabla \phi_i(x_*) = 0 \Rightarrow \lambda_i > 0\text{,~~for all~}i \in \set I(x_*)\text. 
\end{equation}
Therefore, since the functions that comprise $f$ are assumed to be of class $C^2$, it is not possible to have $\nu_k\rightarrow 0$ without having $ii)$, item $a)$. 

Finally, let us suppose for contradiction that there exists $i\in \set I(x_*)$ such that, for any $j\in \{1,\hdots,m\}$ with $\nabla f(x_{k,j}^{\overline{l}_k}) = \nabla \phi_i(x_{k,j}^{\overline{l}_k})$ and $k\in \set K$, we have $(\lambda_{k,\overline{l}_k})_j = 0$. So, since the trust-region constraints are not active for the (outer,~inner) iteration pair $(k,\overline{l}_k)$, whenever $k\in \set K$, and recalling that $\lambda_{k,\overline{l}_k}$ is the optimal solution of 
\begin{equation*}
\begin{split}
\max_{\lambda\in \mathbb{R}^{m}}~~ & \lambda^T\tilde{f}_{k,\overline{l}_k} - \frac{1}{2}\lambda^TG_{k,\overline{l}_k}^TH_k^{-1}G_{k,\overline{l}_k}\lambda\\
\text{s.t.~~} & \lambda^Te = 1 \\
             & \lambda \geq 0\text,
\end{split}
\end{equation*} 
we see, by implication~\eqref{eq:0-generate}, that, if $\overline{k}$ presented in~\eqref{eq:K-set} is sufficiently large, there must exist $M>0$ such that 
\begin{equation*}
M < \frac{1}{2}\lambda_{k,\overline{l}_k}^TG_{k,\overline{l}_k}^TH_k^{-1}G_{k,\overline{l}_k}\lambda_{k,\overline{l}_k}\text{, for all }k\in \set K\text.
\end{equation*}
This implies that $\lambda_{k,\overline{l}_k}$ cannot be the optimal solution, since any $\hat\lambda \in \mathbb{R}^m$, with $\|H_k^{-1}G_{k,\overline{l}_k}\hat\lambda\|<\nu_k$, will give a better function value whenever $k\in \set K$ is large enough (i.e. $\nu_k$ is small enough). In conclusion, we must have that $ii)$, item $b)$, holds.
\end{proof}

Along this subsection, every time we refer to the set written as $\set K$, we are referring to the set $\set K$ defined in~\eqref{eq:K-set}. Moreover, recalling the result obtained above and rearranging properly the sampled points, we can suppose, without any loss of generality, that
\begin{equation*}
\nabla f \left(x_{k,i}^{\overline{l}_k}\right) = \nabla \phi_i \left(x_{k,i}^{\overline{l}_k}\right)\text{ and } \left(\lambda_{k,\overline{l}_k}\right)_i > 0\text{, for all }k\in \set K\text{ and }i\in \set I(x_*)\text.
\end{equation*}
Additionally, for the sake of simplicity, we assume from now on that, for $k\in \set K$, $(\lambda_{k,\overline{l}_k})_i = 0$, if $i \notin \set I(x_*)$. We lead the reader to the Appendix of this study to see that the same local convergence result presented in this subsection can be obtained without this additional assumption. We also stress that when $k\in \set K$, the trust-region constraints are not active for the last inner iteration $\overline{l}_k$.     

So, for any $k\in \set K$, one can rewrite~\eqref{eq:qp-max} as the following optimization problem 
\begin{equation}\label{eq:just-active}
\begin{split}
\min_{\left(d,z\right)\in \mathbb{R}^{n+1}~~} & z + \frac{1}{2}d^TH_kd \\
\text{s.t.~~} & \phi_{i}\left(x_{k,i}^{\overline{l}_k}\right) + \nabla \phi_{i}\left(x_{k,i}^{\overline{l}_k}\right)^T\left(x_k + d - x_{k,i}^{\overline{l}_k}\right) = z_\text,~~1\leq i\leq r+1\text.
\end{split} 
\end{equation}
Alternatively, it can also be viewed as
\begin{equation}\label{eq:equivalent}
\begin{split}
\min_{d\in \mathbb{R}^{n}~~} & \phi_{r+1}\left(x_{k,r+1}^{\overline{l}_k}\right) +  \nabla \phi_{r+1}\left(x_{k,r+1}^{\overline{l}_k}\right)^T\left(x_k + d - x_{k,r+1}^{\overline{l}_k}\right) + \frac{1}{2}d^TH_kd \\
\text{s.t.~~} & \tilde{\Phi}_k + \tilde{J}_kd = 0\text,
\end{split} 
\end{equation}
where $\tilde{\Phi}_k \in \mathbb{R}^{r}$ with
\begin{equation*}
\begin{split}
(\tilde{\Phi}_k)_i := &~ 
\phi_{i}\left(x_{k,i}^{\overline{l}_k}\right) + \nabla \phi_{i}\left(x_{k,i}^{\overline{l}_k}\right)^T\left(x_k - x_{k,i}^{\overline{l}_k}\right) \\
& - \left[\phi_{r+1}\left(x_{k,r+1}^{\overline{l}_k}\right) + \nabla \phi_{r+1}\left(x_{k,r+1}^{\overline{l}_k}\right)^T\left(x_k - x_{k,r+1}^{\overline{l}_k}\right)\right]\text,~i\in \{1,\hdots,r\}\text,\\
\end{split}
\end{equation*}   
and
\begin{equation*}
\tilde{J}_k := 
\left(\begin{array}{c}
\nabla \phi_{1}\left(x_{k,1}^{\overline{l}_k}\right)^T - \nabla \phi_{r+1}\left(x_{k,r+1}^{\overline{l}_k}\right)^T\\
\vdots \\
\nabla \phi_{{r}}\left(x_{k,r}^{\overline{l}_k}\right)^T - \nabla \phi_{r+1}\left(x_{k,r+1}^{\overline{l}_k}\right)^T
\end{array}\right)\text. 
\end{equation*}
So, the minimization problem~\eqref{eq:qp-max} can be viewed as a quadratic approximation of
\begin{equation}\label{eq:opt-phi}
\begin{split}
\min_{x\in \mathbb{R}^{n}~~} & \phi_{r+1}(x) \\
\text{s.t.~~} & \Phi(x) = 0\text,
\end{split} 
\end{equation} 
where
\begin{equation*}
\Phi(x) := 
\left(\begin{array}{c}
\phi_{1}(x) - \phi_{r+1}(x)\\
\vdots \\
\phi_{r}(x) - \phi_{r+1}(x)
\end{array}\right)\text. 
\end{equation*} 

With this initial analysis, we are ready to understand why we have chosen to design a method that produces a sequence of function values that is not monotonically decreasing. When one tries to move superlinearly to  a solution of a smooth constrained optimization problem, the Maratos effect~\cite{MAR78,MIM05} must be taken into consideration. Sometimes, a good movement towards $x_*$ might be not accepted because the candidate for the next iterate does not improve the function value. Normally, a correction step is made to prevent this undesirable property to happen and the superlinear convergence can be assured. 

We have seen above that the quadratic problem that is solved in Step 2 can be seen as a smooth constrained optimization problem and one might expect that we can do the same correction step to ensure a superlinear movement towards the solution. However, since we suppose that we do not know the functions $\phi_i$, such a correction becomes very hard to perform. One could try to numerically approximate $\tilde{J}_k$ during the execution of the algorithm to create a correction step, but this estimation can be very tricky.   For these reasons, we have chosen, for some specific iterations, to accept the step computed by our method without giving attention to the function value. As we will see later, this choice allows us to maintain a superlinear convergence result.    
 
Notice that for any $s\in \set U(x)$, we have that $f$ behaves smoothly along $s$ at $x$, since the $s$-directional derivatives of $\phi_i$ are all the same for $i\in \set I(x)$. Consequently, the kernel of the Jacobian of $\Phi(x)$ will be of great importance to us, because it tends to recover the smooth subspace of $f$ at $x_*$ when $x$ approaches $x_*$. Therefore, we denote by $J_x$ the Jacobian of $\Phi(x)$ and by $Z^\triangleleft_x$ the matrix whose columns form a basis for the kernel of $J_x$. Moreover, from now on, our analysis will be restricted to the case that $r\in \{1,\hdots,n-1\}$. The cases $r = 0$ and $r = n$ will be treated later (see Remark~\ref{remark:3}).

In light of Remark~\ref{remark:1}, due to Assumption~\ref{assump:affine}, it is possible to see that the map $J_x:\mathbb{R}^n\rightarrow \mathbb{R}^r$ is surjective for all $x$ in a small neighborhood $\set N$ of $x_*$. Hence, for $x\in \set N$, there must exist $J^\triangleleft_x\in \mathbb{R}^{n\times r}$ such that $J_xJ^\triangleleft_x = I_{r}$. Moreover, by~\cite[Lemma 14.3]{BGL06}, one can see that there is only one map 
\begin{equation*}
\begin{split}
Z:\mathbb{R}^n & \longrightarrow \mathbb{R}^{(n-r)\times n}\\
x & \longmapsto Z_{x}
\end{split}
\end{equation*}
such that $Z_xJ^\triangleleft_x$ is a null matrix, $Z_xZ^\triangleleft_x = I_{n-r}$ and the following relations hold
\begin{equation}\label{eq:2subspaces}
Z^\triangleleft_xZ_x + J^\triangleleft_xJ_x = I_n\text{ and }J_xZ^\triangleleft_x = 0\text. 
\end{equation}
So, we may divide $\mathbb{R}^n$ into two subspaces, generated by the columns of $Z^\triangleleft_x$ and $J^\triangleleft_x$, respectively.

Now, coming back to the optimization problem~\eqref{eq:opt-phi}, we define its Lagrangian function $\mathcal L(x,\lambda):\mathbb{R}^n\times\mathbb{R}^r\rightarrow \mathbb{R}$ as 
\begin{equation}\label{eq:lagrangian}
\mathcal L(x,\lambda) = \phi_{r+1}(x) + \lambda^T\Phi(x)\text.
\end{equation} 
By Remark~\ref{remark:1}, the feasible set of problem~\eqref{eq:opt-phi} satisfies the linear independence constraint qualification and thus there is only one $\lambda_*\in \mathbb{R}^r$ such that $\nabla_x \mathcal L(x_*,\lambda_*)$ is the null vector. So, in possession of this vector $\lambda_*$, we define $g:\mathbb{R}^n\rightarrow \mathbb{R}^{n-r}$, where
\begin{equation}\label{eq:dev-reduced}
g(x) := {Z^\triangleleft_x}^T\nabla_x \mathcal L(x,\lambda_*) \overset{\eqref{eq:2subspaces}}{=} {Z^\triangleleft_x}^T\nabla\phi_{r+1}(x)\text.
\end{equation}
Moreover, for not overloading the proofs that will follow, we also define
\begin{equation} \label{eq:Amatrix}
A_{k}:= I_n - Z^\triangleleft_{x_k}\hat{H}_k^{-1}{Z^\triangleleft_{x_k}}^TH_k\text,
\end{equation}
with
\begin{equation*}
\hat{H}_k:= {Z^\triangleleft_{x_k}}^TH_kZ^\triangleleft_{x_k}\text.
\end{equation*}

Below, we present a theorem that establishes the exact solution $d_{k,\overline{l}_k}$ obtained in~\eqref{eq:qp-max} whenever it is equivalent to~\eqref{eq:equivalent}. For this result and the subsequent ones, we define 
\begin{equation}\label{eq:define-tau}
\tau_{k,\overline{l}_k} := \max_{1\leq i\leq r+1}\left\| x_{k,i}^{\overline{l}_k} - x_k\right\|\text.
\end{equation}
\begin{theorem}\label{theo:direction}
Under Assumptions~\ref{assump1}, \ref{assump:affine} and~\ref{assump:strongmin}, suppose we are at a fixed outer iteration $k$ of GraFuS and at the last inner iteration indexed by $\overline{l}_k$. Then, if $k\in \set K$, where $\set K$ is the index set defined in~\eqref{eq:K-set}, and $x_k\in \set N$, where $\set N$ is the small neighborhood in which the map $J_x$ is surjective, we have that
\begin{equation*}
d_{k,\overline{l}_k} = d_{k,\overline{l}_k}^{\set U} + d_{k,\overline{l}_k}^{\set V}\text,
\end{equation*}
where
\begin{equation*}
d_{k,\overline{l}_k}^{\set U} := -Z^\triangleleft_{x_k}\hat{H}_k^{-1}g(x_k) + \rho_{k}^{\set U}~~\text{and}~~d_{k,\overline{l}_k}^{\set V} := -A_{k}J^\triangleleft_{x_k}\Phi(x_k) + \rho_{k}^{\set V}\text,
\end{equation*}
with
\begin{equation*}
\rho_k^{\set U} = -Z^\triangleleft_{x_k}\hat{H}_k^{-1}{Z^\triangleleft_{x_k}}^T\overline\rho_k~~\text{and}~~\rho_k^{\set V} = -A_{k}J^\triangleleft_{x_k}\hat\rho_k\text, 
\end{equation*}
for some $\overline\rho_k\in \mathbb{R}^n$ and $\hat\rho_k\in \mathbb{R}^r$ satisfying
\begin{equation*}
\|\overline\rho_k\| = O\left(\tau_{k,\overline{l}_k}\right)~~\text{and}~~\|\hat\rho_k\| = O\left(\tau_{k,\overline{l}_k}^{2}\right) + O\left(\tau_{k,\overline{l}_k}\right)O\left(\|d_{k,\overline{l}_k}\|\right)\text,
\end{equation*}
with $\tau_{k,\overline{l}_k}$ defined in~\eqref{eq:define-tau}.
\end{theorem}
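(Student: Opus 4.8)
The plan is to exploit the reduction, just established via Lemma~\ref{lemma:good-sample}, that for $k\in\set K$ with $x_k\in\set N$ the subproblem~\eqref{eq:qp-max} collapses to the equality-constrained quadratic program~\eqref{eq:equivalent}, whose data are the sampled quantities $\tilde J_k$, $\tilde\Phi_k$ and the linear coefficient $c_k:=\nabla\phi_{r+1}(x_{k,r+1}^{\overline{l}_k})$. I would first write its KKT system: there is a multiplier $\mu_k\in\mathbb{R}^r$ with $H_kd_{k,\overline{l}_k}+c_k+\tilde J_k^T\mu_k=0$ and $\tilde J_kd_{k,\overline{l}_k}=-\tilde\Phi_k$. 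Because $\tilde J_k$ has full row rank near $x_*$ and $H_k$ is uniformly bounded and positive definite by Assumption~\ref{assump1}, the multiplier stays bounded, $\|\mu_k\|=O(1)$, uniformly over $\set N$.

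The second ingredient is a package of Taylor estimates linking the sampled data to the quantities evaluated at $x_k$. Since every $\phi_i$ is $C^2$ and $\|x_{k,i}^{\overline{l}_k}-x_k\|\le\tau_{k,\overline{l}_k}$, Lipschitz continuity of the gradients gives $\tilde J_k=J_{x_k}+E_k$ with $\|E_k\|=O(\tau_{k,\overline{l}_k})$ and $c_k=\nabla\phi_{r+1}(x_k)+O(\tau_{k,\overline{l}_k})$; and since each entry of $\tilde\Phi_k$ is the first-order model of $\phi_i-\phi_{r+1}$ based at a point within $\tau_{k,\overline{l}_k}$ of $x_k$, the second-order remainder yields $\tilde\Phi_k=\Phi(x_k)+O(\tau_{k,\overline{l}_k}^2)$.

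Next I would define the $\set V$-part by projecting the constraint onto $\mathrm{range}(J^\triangleleft_{x_k})$. Writing $J_{x_k}d_{k,\overline{l}_k}=\tilde J_kd_{k,\overline{l}_k}-E_kd_{k,\overline{l}_k}=-(\Phi(x_k)+\hat\rho_k)$ identifies $\hat\rho_k:=(\tilde\Phi_k-\Phi(x_k))+E_kd_{k,\overline{l}_k}$, whose size is exactly $O(\tau_{k,\overline{l}_k}^2)+O(\tau_{k,\overline{l}_k})O(\|d_{k,\overline{l}_k}\|)$ as claimed. Set $d_{k,\overline{l}_k}^{\set V}:=-A_kJ^\triangleleft_{x_k}(\Phi(x_k)+\hat\rho_k)$ and $d_{k,\overline{l}_k}^{\set U}:=d_{k,\overline{l}_k}-d_{k,\overline{l}_k}^{\set V}$. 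Two exact identities make the split clean: first, $J_{x_k}A_kJ^\triangleleft_{x_k}=I_r$ (from $J_{x_k}Z^\triangleleft_{x_k}=0$ and $J_{x_k}J^\triangleleft_{x_k}=I_r$) gives $J_{x_k}d_{k,\overline{l}_k}^{\set V}=-(\Phi(x_k)+\hat\rho_k)$, hence $J_{x_k}d_{k,\overline{l}_k}^{\set U}=0$, so $d_{k,\overline{l}_k}^{\set U}=Z^\triangleleft_{x_k}s_k$ for some $s_k$; second, ${Z^\triangleleft_{x_k}}^TH_kA_k=0$ (from $\hat H_k={Z^\triangleleft_{x_k}}^TH_kZ^\triangleleft_{x_k}$) gives ${Z^\triangleleft_{x_k}}^TH_kd_{k,\overline{l}_k}^{\set V}=0$.

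Finally I would recover the $\set U$-part. Premultiplying the first KKT equation by ${Z^\triangleleft_{x_k}}^T$ and using ${Z^\triangleleft_{x_k}}^TJ_{x_k}^T=0$, the term ${Z^\triangleleft_{x_k}}^T\tilde J_k^T\mu_k={Z^\triangleleft_{x_k}}^TE_k^T\mu_k=O(\tau_{k,\overline{l}_k})$, combined with $c_k-\nabla\phi_{r+1}(x_k)=O(\tau_{k,\overline{l}_k})$, yields ${Z^\triangleleft_{x_k}}^T(H_kd_{k,\overline{l}_k}+\nabla\phi_{r+1}(x_k))=O(\tau_{k,\overline{l}_k})$; subtracting the vanishing ${Z^\triangleleft_{x_k}}^TH_kd_{k,\overline{l}_k}^{\set V}$ gives ${Z^\triangleleft_{x_k}}^T(H_kd_{k,\overline{l}_k}^{\set U}+\nabla\phi_{r+1}(x_k))=O(\tau_{k,\overline{l}_k})$. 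Since ${Z^\triangleleft_{x_k}}^TH_kd_{k,\overline{l}_k}^{\set U}=\hat H_ks_k$ and ${Z^\triangleleft_{x_k}}^T\nabla\phi_{r+1}(x_k)=g(x_k)$, I solve $\hat H_ks_k=-g(x_k)-{Z^\triangleleft_{x_k}}^T\overline\rho_k$ by taking $\overline\rho_k\in\mathrm{range}(Z^\triangleleft_{x_k})$ whose projection ${Z^\triangleleft_{x_k}}^T\overline\rho_k$ equals the above residual, so $\|\overline\rho_k\|=O(\tau_{k,\overline{l}_k})$, which delivers $d_{k,\overline{l}_k}^{\set U}=-Z^\triangleleft_{x_k}\hat H_k^{-1}g(x_k)-Z^\triangleleft_{x_k}\hat H_k^{-1}{Z^\triangleleft_{x_k}}^T\overline\rho_k$ exactly as stated. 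The main obstacle I anticipate is the bookkeeping that cleanly separates the $O(\tau_{k,\overline{l}_k})$ from the $O(\tau_{k,\overline{l}_k}^2)$ contributions together with the uniform boundedness of $\mu_k$; once the two identities $J_{x_k}A_kJ^\triangleleft_{x_k}=I_r$ and ${Z^\triangleleft_{x_k}}^TH_kA_k=0$ are in hand, the decomposition follows essentially by substitution.
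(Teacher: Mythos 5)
Your proposal is correct and follows essentially the same route as the paper's proof: the KKT system of~\eqref{eq:equivalent}, Taylor estimates replacing $\tilde J_k$, $\tilde\Phi_k$ and $\nabla\phi_{r+1}(x_{k,r+1}^{\overline{l}_k})$ by their counterparts at $x_k$, the splitting of $\mathbb{R}^n$ along $Z^\triangleleft_{x_k}$ and $A_kJ^\triangleleft_{x_k}$, and the two identities $J_{x_k}A_kJ^\triangleleft_{x_k}=I_r$ and ${Z^\triangleleft_{x_k}}^TH_kA_k=0$. Your explicit remark that the multiplier remains bounded (needed so that $(\tilde J_k-J_{x_k})^T\mu_k=O(\tau_{k,\overline{l}_k})$) is a point the paper leaves implicit, but otherwise the two arguments coincide.
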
 
\begin{proof}
First, we consider the Karush-Kuhn-Tucker conditions of problem~\eqref{eq:equivalent}, which tell us that the solution $d_{k,\overline{l}_k}$ must satisfy
\begin{equation}\label{eq:KKT1}
\tilde{\Phi}_k + \tilde{J}_kd_{k,\overline{l}_k} = 0
\end{equation}
and 
\begin{equation}\label{eq:KKT2}
\nabla \phi_{{r+1}}\left(x_{k,r+1}^{\overline{l}_k}\right) + H_kd_{k,\overline{l}_k} + \tilde{J}_k^T\tilde\lambda = 0\text,
\end{equation}
for some $\tilde\lambda \in \mathbb{R}^r$. Since the functions that comprise $f$ satisfy $\phi_i\in C^2$, for $i\in \{1,\hdots,p\}$, we have, by relations~\eqref{eq:KKT1}~and~\eqref{eq:KKT2} that 
\begin{equation}\label{eq:KKT1-aprox}
\begin{split}
0 & = \Phi(x_k) + J_{x_k}d_{k,\overline{l}_k} + \left[\tilde{\Phi}_k - \Phi(x_k)\right] + \left[\tilde{J}_k - J_{x_k}\right]d_{k,\overline{l}_k}\\
& = \Phi(x_k) + J_{x_k}d_{k,\overline{l}_k} + \hat\rho_k
\end{split}
\end{equation}
and 
\begin{equation}\label{eq:KKT2-aprox}
\nabla \phi_{{r+1}}\left(x_k\right) + H_kd_{k,\overline{l}_k} + J_{x_k}^T\tilde\lambda + \overline\rho_k = 0\text,
\end{equation} 
where $\|\hat\rho_k\| = O\left(\tau_{k,\overline{l}_k}^{2}\right) + O\left(\tau_{k,\overline{l}_k}\right)O\left(\|d_{k,\overline{l}_k}\|\right)$ and $\|\overline\rho_k\| =  O\left(\tau_{k,\overline{l}_k}\right)$.
Then, because $A_{k}J^\triangleleft_{x_k}$ is a right inverse for $J_{x_k}$ (see \cite[Section 14.2]{BGL06} or simply use the fact that $J_{x_k}Z_{x_k}^\triangleleft = 0$), it is possible to decompose $\mathbb{R}^n$ in two subspaces generated by the columns of $Z^\triangleleft_{x_k}$ and $A_{k}J^\triangleleft_{x_k}$. As a consequence, we can consider two vectors $d_{k,\overline{l}_k}^{\set U}$ and $d_{k,\overline{l}_k}^{\set V}$ such that there exist $\alpha_{\set U}$ and $\alpha_{\set V}$ that imply
\begin{equation*}
d_{k,\overline{l}_k} = d_{k,\overline{l}_k}^{\set U} + d_{k,\overline{l}_k}^{\set V}\text,
\end{equation*}
with
\begin{equation*}
d_{k,\overline{l}_k}^{\set U} = Z^\triangleleft_{x_k}\alpha_{\set U}~~\text{and}~~d_{k,\overline{l}_k}^{\set V} = A_{k}J^\triangleleft_{x_k}\alpha_{\set V}\text.
\end{equation*}
Hence, looking at relation~\eqref{eq:KKT1-aprox}, we obtain that
\begin{equation*}
\alpha_{\set V} = -\Phi(x_k) - \hat\rho_k\text, 
\end{equation*}
which yields
\begin{equation*}
d_{k,\overline{l}_k}^{\set V} = -A_{k}J^\triangleleft_{x_k}\Phi(x_k) + \rho_k^{\set V}\text{,~~with~~} \rho_k^{\set V} = -A_{k}J^\triangleleft_{x_k}\hat\rho_k\text.
\end{equation*} 
Finally, pre-multiplying relation~\eqref{eq:KKT2-aprox} by ${Z_{x_k}^\triangleleft}^T$, we have
\begin{equation*}
g(x_k) + {Z^\triangleleft_{x_k}}^TH_k\left[Z^\triangleleft_{x_k}\alpha_U - A_{k}J^\triangleleft_{x_k}\left(\Phi(x_k) + \hat\rho_k\right)\right] + {Z^\triangleleft_{x_k}}^T\overline\rho_k = 0\text.
\end{equation*}
Then, since ${Z^\triangleleft_{x_k}}^TH_kA_{k} = 0$, we complete the proof by noticing that
\begin{equation*}
\alpha_{\set U} = -\hat{H}_k^{-1}g(x_k) -\hat{H}_k^{-1}{Z^\triangleleft_{x_k}}^T\overline\rho_k \Rightarrow d_{k,\overline{l}_k}^{\set U} = -Z^\triangleleft_{x_k}\hat{H}_k^{-1}g(x_k) + \rho_k^{\set U}\text,
\end{equation*}
where $\rho_k^{\set U} = -Z^\triangleleft_{x_k}\hat{H}_k^{-1}{Z^\triangleleft_{x_k}}^T\overline\rho_k$. 
\end{proof}

%

Below, we present the last technical result before providing the key theorem of this subsection. As an hypothesis of this statement, we assume that the matrices $H_k$ must converge to a matrix $H_*$, where 
\begin{equation}\label{eq:H-convergence}
H_* =  \nabla^2_{xx}\mathcal{L}(x_*,\lambda_*) + \gamma J_{x_*}^TJ_{x_*}\text,~~\text{for some }\gamma\geq0\text. 
\end{equation}
By Assumption~\ref{assump:strongmin}, we see that the Hessian of the Lagrangian must be a positive definite matrix with respect to the subspace $\set U(x_*)$. So, for $\gamma > 0$ sufficiently large, $H_*$ becomes also a positive definite matrix.

\begin{theorem}\label{theo:superlinear}
Under Assumptions~\ref{assump1}, \ref{assump:affine} and~\ref{assump:strongmin}, suppose that $x_k\rightarrow x_*$, where $x_*\in \mathbb{R}^n$ is a local minimizer for $f$ presented in~\eqref{eq:minimax-prob}. Assume that $k\in \set K$, where $\set K$ is the index set defined in~\eqref{eq:K-set}, and $x_k\in \set N$, where $\set N$ is the small neighborhood in which the map $J_x$ is surjective. Also, close to $x_*$, suppose that the maps
\begin{equation*}
\begin{array}{cccc}
Z^\triangleleft: & \mathbb{R}^n & \longrightarrow & \mathbb{R}^{n\times(n-r)} \\
                 &      x       & \longmapsto     &  Z^\triangleleft_{x}
\end{array}
\text{~~~and~~~}
\begin{array}{cccc}
J^\triangleleft: & \mathbb{R}^n & \longrightarrow & \mathbb{R}^{n\times r} \\
                 &      x       & \longmapsto     &  J^\triangleleft_{x}
\end{array}
\end{equation*}
are all Lipschitz continuous functions and that the reduced gradient given in~\eqref{eq:dev-reduced} satisfies $g\in C^1$ with $g'$ being also a Lipschitz continuous function close to $x_*$. Moreover, assume that $H_k\rightarrow H_*$ with $H_*$ being the matrix presented in~\eqref{eq:H-convergence}. Additionally, suppose that, close to $x_*$, we have $\|H_k - H_*\| = O(\|x_k - x_*\|)$. Then, the following relation holds
\begin{equation*}
\|x_{k+1} - x_*\| = O(\|x_k-x_*\|^2) + \rho_k^{\set U} + \rho_k^{\set V}\text,~~\text{for }k\in \set K\text, 
\end{equation*} 
with $\rho_k^{\set U}$ and $\rho_k^{\set V}$ from Theorem~\ref{theo:direction}.
\end{theorem}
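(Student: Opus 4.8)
\emph{Sketch of the intended proof.} The plan is to read $x_{k+1} = x_k + d_{k,\overline{l}_k}$ as a perturbed reduced-Newton step for the smooth equality-constrained problem~\eqref{eq:opt-phi}, whose unique KKT pair is $(x_*,\lambda_*)$, and to estimate the error $e_k := x_k - x_*$ separately in the two subspaces furnished by Theorem~\ref{theo:direction}. Two identities drive the whole argument: since $x_*$ minimizes $f$ and $\set I(x_*) = \{1,\dots,r+1\}$, all active pieces coincide at $x_*$, so $\Phi(x_*) = 0$; and since $\nabla_x\mathcal L(x_*,\lambda_*) = 0$, formula~\eqref{eq:dev-reduced} gives $g(x_*) = 0$. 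I would then use the oblique, $H_k$-conjugate splitting $\mathbb{R}^n = \operatorname{range}(Z^\triangleleft_{x_k}) \oplus \operatorname{range}(A_k J^\triangleleft_{x_k})$ from the proof of Theorem~\ref{theo:direction}; note that $A_k = I_n - Z^\triangleleft_{x_k}\hat H_k^{-1}{Z^\triangleleft_{x_k}}^TH_k$ is exactly the projector onto the second summand along the first, because ${Z^\triangleleft_{x_k}}^TH_kA_k = 0$ and $Z_{x_k}Z^\triangleleft_{x_k} = I_{n-r}$. Writing $e_{k+1} = e_k + d_{k,\overline{l}_k}^{\set U} + d_{k,\overline{l}_k}^{\set V}$ and substituting the expressions of Theorem~\ref{theo:direction}, I would bound each component.

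For the feasibility ($\set V$) component I would apply $J_{x_k}$ to $e_{k+1}$. Since $d_{k,\overline{l}_k}^{\set U}$ and $\rho_k^{\set U}$ lie in $\operatorname{range}(Z^\triangleleft_{x_k})$ and $J_{x_k}Z^\triangleleft_{x_k} = 0$, while $A_k J^\triangleleft_{x_k}$ is a right inverse of $J_{x_k}$, relation~\eqref{eq:KKT1-aprox} yields $J_{x_k}d_{k,\overline{l}_k} = -\Phi(x_k) - \hat\rho_k$. A Taylor expansion of $\Phi$ about $x_k$ with $\Phi(x_*) = 0$ and the $C^2$ smoothness of the $\phi_i$ gives $\Phi(x_k) = J_{x_k}e_k + O(\|e_k\|^2)$, so $J_{x_k}e_{k+1} = O(\|e_k\|^2) - \hat\rho_k$. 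Reconstructing the $\set V$-coordinate of $e_{k+1}$ by multiplying this quantity by the operator $A_k J^\triangleleft_{x_k}$ (bounded on $\set N$ by the surjectivity and Lipschitz hypotheses) and recalling $\rho_k^{\set V} = -A_k J^\triangleleft_{x_k}\hat\rho_k$, the $\set V$-part of the error equals $O(\|e_k\|^2) + \rho_k^{\set V}$.

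For the optimality ($\set U$) component I would extract the $\operatorname{range}(Z^\triangleleft_{x_k})$ coordinate of $e_{k+1}$ via the conjugate projector $I_n - A_k$, which collapses the $\set U$-part of $e_{k+1}$ to $Z^\triangleleft_{x_k}\hat H_k^{-1}\big({Z^\triangleleft_{x_k}}^TH_ke_k - g(x_k)\big) + \rho_k^{\set U}$, with $\hat H_k^{-1}$ uniformly bounded by Assumption~\ref{assump:strongmin} and continuity. The task thus reduces to the Newton cancellation ${Z^\triangleleft_{x_k}}^TH_ke_k - g(x_k) = O(\|e_k\|^2)$. To prove it I would expand $g(x) = {Z^\triangleleft_x}^T\nabla_x\mathcal L(x,\lambda_*)$ (which is $C^1$ with Lipschitz derivative) about $x_*$: the term carrying the derivative of $x\mapsto Z^\triangleleft_x$ multiplies $\nabla_x\mathcal L(x_*,\lambda_*) = 0$ and drops out, leaving $g(x_k) = {Z^\triangleleft_{x_*}}^T\nabla^2_{xx}\mathcal L(x_*,\lambda_*)e_k + O(\|e_k\|^2)$. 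On the other side, $\|H_k - H_*\| = O(\|e_k\|)$ and the Lipschitz continuity of $Z^\triangleleft$ give ${Z^\triangleleft_{x_k}}^TH_ke_k = {Z^\triangleleft_{x_*}}^TH_*e_k + O(\|e_k\|^2)$, and since $J_{x_*}Z^\triangleleft_{x_*} = 0$ the rank correction $\gamma J_{x_*}^TJ_{x_*}$ in~\eqref{eq:H-convergence} disappears, so ${Z^\triangleleft_{x_*}}^TH_*e_k = {Z^\triangleleft_{x_*}}^T\nabla^2_{xx}\mathcal L(x_*,\lambda_*)e_k$; subtracting the two expansions yields the cancellation, and hence the $\set U$-part of the error is $O(\|e_k\|^2) + \rho_k^{\set U}$.

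The main obstacle is precisely this Newton cancellation in the $\set U$-component: it hinges on differentiating $g(x) = {Z^\triangleleft_x}^T\nabla_x\mathcal L(x,\lambda_*)$ and showing that all curvature produced by the $x$-dependence of $Z^\triangleleft_x$ either annihilates the vanishing Lagrangian gradient at $x_*$ or is absorbed into the reduced Hessian, so that $\hat H_k$ matches the true reduced Hessian ${Z^\triangleleft_{x_*}}^T\nabla^2_{xx}\mathcal L(x_*,\lambda_*)Z^\triangleleft_{x_*}$ up to an $O(\|e_k\|)$ error whose product with the $O(\|e_k\|)$-sized reduced gradient stays quadratically small; keeping the projections evaluated at $x_k$ consistent with those at $x_*$ (controlled by the Lipschitz hypotheses) is the delicate bookkeeping. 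Once both components are estimated, adding them and taking norms gives $\|x_{k+1} - x_*\| = O(\|x_k - x_*\|^2) + \rho_k^{\set U} + \rho_k^{\set V}$, as claimed.
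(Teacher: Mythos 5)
Your proposal is correct and follows essentially the same route as the paper's proof: both read $x_{k+1}=x_k+d_{k,\overline{l}_k}$ as a perturbed reduced-Newton step, invoke the decomposition of Theorem~\ref{theo:direction}, Taylor-expand $\Phi$ and $g$ at $x_*$ (using $\Phi(x_*)=0$, $g(x_*)=0$ and $g'(x_*)={Z^\triangleleft_{x_*}}^T\nabla^2_{xx}\mathcal{L}(x_*,\lambda_*)$), exploit the Lipschitz hypotheses to freeze $Z^\triangleleft$, $J^\triangleleft$, $A_k$ and $\hat H_k$ at $x_*$, and kill the $\gamma J_{x_*}^TJ_{x_*}$ correction via $J_{x_*}Z^\triangleleft_{x_*}=0$. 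The only difference is organizational: the paper carries out one chain of equalities for $x_{k+1}-x_*$ and concludes from $A_*Z^\triangleleft_{x_*}Z_{x_*}=0$, whereas you extract the two error components with the complementary oblique projectors $I_n-A_k$ and $A_kJ^\triangleleft_{x_k}J_{x_k}$ and bound each separately, which amounts to the same cancellation.
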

\begin{proof}
First, let us define $\tilde{x}_{k+1} := x_k + d_{k,\overline{l}_k}^{\set V}$, with $k\in \set K$. Now, observe that, from the definition~\eqref{eq:Amatrix}, for $x_k$ close enough to $x_*$, we have $\|A_{k} - A_{*}\| = O(\|x_k - x_*\|)$, where 
\begin{equation*}
A_{*}:= I_n - Z^\triangleleft_{x_*}\hat{H}_*^{-1}{Z^\triangleleft_{x_*}}^TH_*\text{,~~with~}\hat{H}_* := {Z^\triangleleft_{x_*}}^TH_*Z^\triangleleft_{x_*}\text.
\end{equation*}
Using this fact, considering the Taylor expansion of the map $\Phi$ around $x_*$ and remembering that $\Phi(x_*) = 0$ in the equality $(*)$ below and noticing that $J^\triangleleft$ is Lipschitz continuous and a bounded map around $x_*$ in $(**)$, we have, for a sufficiently small neighborhood of $x_*$, that 
\begin{equation*}
\begin{split}
\tilde{x}_{k+1} - x_* & = x_k - x_* -A_{k}J^\triangleleft_{x_k}\Phi(x_k) + \rho_k^{\set V} \\
& \overset{(*)}{=} x_k - x_* - A_{k}J^\triangleleft_{x_k}J_{x_*}(x_k-x_*) + O(\|x_k-x_*\|^2) + \rho_k^{\set V} \\
& = x_k - x_* - A_{*}J^\triangleleft_{x_*}J_{x_*}(x_k-x_*) + O(\|x_k-x_*\|^2) + \rho_k^{\set V} \\
& ~~~- \left[A_{k}\left(J^\triangleleft_{x_k}-J^\triangleleft_{x_*}\right) + \left(A_{k} - A_{*}\right)J^\triangleleft_{x_*}\right]J_{x_*}(x_k-x_*) \\
& \overset{(**)}{=} x_k - x_* -A_{*}J^\triangleleft_{x_*}J_{x_*}(x_k-x_*) + O(\|x_k-x_*\|^2) + \rho_k^{\set V} \text. 
\end{split}
\end{equation*}
Consequently, taking into account the relation (see \cite[Section 14.5]{BGL06})
\begin{equation*}
g'(x_*) = {Z^\triangleleft_{x_*}}^T\nabla^2_{xx}\mathcal{L}(x_*,\lambda_*)
\end{equation*}
in $(\bullet)$, the Lipschitz property around $x_*$ of the maps $Z^\triangleleft$ and $\hat{H}^{-1}$ in $(\bullet\bullet)$, the relation~\eqref{eq:H-convergence} in $(\blacktriangle)$ and the relation~\eqref{eq:2subspaces} in $(\blacktriangle\blacktriangle)$, we have
\begin{equation*}
\begin{split}
x_{k+1} - x_* & = x_{k} + d_{k,\overline{l}_k}^{\set V} + d_{k,\overline{l}_k}^{\set U} - x_* \\
& = \tilde{x}_{k+1} - x_* -Z^\triangleleft_{x_k}\hat{H}_k^{-1}g(x_k)+\rho_k^{\set U}~\text{(recall that }\tilde{x}_{k+1} := x_k + d_{k,\overline{l}_k}^{\set V}\text)\\
& \overset{(\bullet)}{=} \tilde{x}_{k+1} - x_* -Z^\triangleleft_{x_k}\hat{H}_k^{-1}{Z^\triangleleft_{x_*}}^T\nabla^2_{xx}\mathcal{L}(x_*,\lambda_*)(x_k-x_*) \\
&~~~+ O(\|x_k-x_*\|^2)+\rho_k^{\set U} \\
& \overset{(\bullet\bullet)}{=} \tilde{x}_{k+1} - x_* -Z^\triangleleft_{x_*}\hat{H}_*^{-1}{Z^\triangleleft_{x_*}}^T\nabla^2_{xx}\mathcal{L}(x_*,\lambda_*)(x_k-x_*) \\
&~~~+ O(\|x_k-x_*\|^2)+\rho_k^{\set U} \\
& \overset{(\blacktriangle)}{=} \tilde{x}_{k+1} - x_* -Z^\triangleleft_{x_*}\hat{H}_*^{-1}{Z^\triangleleft_{x_*}}^TH_*(x_k-x_*) + O(\|x_k-x_*\|^2)+\rho_k^{\set U} \\
& = A_{*}(x_k-x_*) -A_{*}J^\triangleleft_{x_*}J_{x_*}(x_k-x_*) + O(\|x_k-x_*\|^2) \\
&~~~+ \rho_k^{\set U} + \rho_k^{\set V} \\ 
& = A_{*}(I-J^\triangleleft_{x_*}J_{x_*})(x_k-x_*) + O(\|x_k-x_*\|^2) + \rho_k^{\set U} + \rho_k^{\set V} \\
& \overset{(\blacktriangle\blacktriangle)}{=} A_{*}Z^\triangleleft_{x_*}Z_{x_*}(x_k-x_*) + O(\|x_k-x_*\|^2) + \rho_k^{\set U} + \rho_k^{\set V}\text.
\end{split}
\end{equation*}
Hence, since $A_{*}Z^\triangleleft_{x_*} = 0$, it yields that
\begin{equation*}
\|x_{k+1} - x_*\| = O(\|x_k-x_*\|^2) + \rho_k^{\set U} + \rho_k^{\set V}\text,
\end{equation*}
which concludes the proof. 
\end{proof}

Finally, we are able to prove the most important result of this manuscript, which ensures that, under special circumstances, the method either moves superlinearly to a minimizer of the problem or superlinearly reduces the optimality certificate.
\begin{theorem}\label{theo:key}
Under Assumptions~\ref{assump1}, \ref{assump:affine} and~\ref{assump:strongmin}, suppose that $\{x_k\}$ is an infinite sequence generated by GraFuS with $\nu_k\rightarrow 0$ and that we are under the conditions of Theorem~\ref{theo:superlinear}. Then, if $\sigma_k > 1$ for all $k\in \set K$, we have 
\begin{equation*}
\min\left\{ \frac{\nu_{k+1}}{\nu_k} , \frac{\|x_{k+1} - x_*\|}{\|x_{k} - x_*\|} \right\} \underset{k\in \set K}{\rightarrow} 0\text.
\end{equation*}
\end{theorem}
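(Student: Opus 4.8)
The plan is to collapse the whole statement to a comparison between two competing scalar quantities and then close with an AM--GM-type inequality. The pivotal first observation is that, for $k\in\set K$, the trust-region constraint is inactive at the last inner iteration, so the stationarity condition of~\eqref{eq:qp-max} in the variable $d$ (equivalently, $\omega_{k,\overline{l}_k}=0$ in the dual) reads $H_kd_{k,\overline{l}_k}+G_{k,\overline{l}_k}\lambda_{k,\overline{l}_k}=0$. Hence $H_k^{-1}G_{k,\overline{l}_k}\lambda_{k,\overline{l}_k}=-d_{k,\overline{l}_k}$, so the optimality certificate coincides with the step length:
\begin{equation*}
\big\|H_k^{-1}G_{k,\overline{l}_k}\lambda_{k,\overline{l}_k}\big\|=\|d_{k,\overline{l}_k}\|=\|x_{k+1}-x_k\|\text.
\end{equation*}
Together with the update rule of Step~3, for every $k\in\set K$ this gives $\nu_{k+1}\le\|d_{k,\overline{l}_k}\|+\nu_k^{\varrho}$, whence
\begin{equation*}
\frac{\nu_{k+1}}{\nu_k}\le\frac{\|d_{k,\overline{l}_k}\|}{\nu_k}+\nu_k^{\varrho-1}\text,
\end{equation*}
and $\nu_k^{\varrho-1}\to0$ because $\varrho>1$.

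Next I would control the sampling radius. Since the samples lie in $\set B(x_k,(\epsilon_{k,\overline{l}_k})^{\sigma_k})$ and $\epsilon_{k,\overline{l}_k}\le\epsilon_{k,0}=\gamma_\epsilon\nu_k$, the quantity of~\eqref{eq:define-tau} obeys $\tau_{k,\overline{l}_k}=O(\nu_k^{\sigma_k})$. Inserting this into the estimates of Theorem~\ref{theo:direction} yields $\|\rho_k^{\set U}\|=O(\nu_k^{\sigma_k})$ and $\|\rho_k^{\set V}\|=O(\nu_k^{2\sigma_k})+O(\nu_k^{\sigma_k})\,O(\|d_{k,\overline{l}_k}\|)$. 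Feeding these into Theorem~\ref{theo:superlinear}, bounding $\|d_{k,\overline{l}_k}\|\le\|x_{k+1}-x_*\|+\|x_k-x_*\|$, and absorbing the resulting $O(\nu_k^{\sigma_k})\|x_{k+1}-x_*\|$ term into the left-hand side (legitimate for large $k$, since $\nu_k^{\sigma_k}\to0$) produces the two clean estimates
\begin{equation*}
\|x_{k+1}-x_*\|=O(\|x_k-x_*\|^2)+O(\nu_k^{\sigma_k})\quad\text{and}\quad\|d_{k,\overline{l}_k}\|=O(\|x_k-x_*\|)+O(\nu_k^{\sigma_k})\text.
\end{equation*}

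With these in hand the two ratios become $\nu_{k+1}/\nu_k=O(\|x_k-x_*\|/\nu_k)+O(\nu_k^{\sigma_k-1})+O(\nu_k^{\varrho-1})$ and $\|x_{k+1}-x_*\|/\|x_k-x_*\|=O(\|x_k-x_*\|)+O(\nu_k^{\sigma_k}/\|x_k-x_*\|)$. Every term except the two \emph{competing} ones, $\|x_k-x_*\|/\nu_k$ and $\nu_k^{\sigma_k}/\|x_k-x_*\|$, vanishes along $\set K$ (here $\sigma_k>1$ is used to force $\nu_k^{\sigma_k-1}\to0$). The decisive point is that the product of the two competing terms is exactly $\nu_k^{\sigma_k-1}\to0$, so their minimum is bounded by its square root, $\nu_k^{(\sigma_k-1)/2}\to0$. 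Using the elementary inequality $\min\{a_1+b_1,a_2+b_2\}\le\max\{a_1,a_2\}+\min\{b_1,b_2\}$ with the $a_i$ the vanishing terms and the $b_i$ the competing ones, I would conclude $\min\{\nu_{k+1}/\nu_k,\ \|x_{k+1}-x_*\|/\|x_k-x_*\|\}\to0$ along $\set K$, which is the claim.

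The main obstacle is twofold. First one must firmly justify $\|H_k^{-1}G_{k,\overline{l}_k}\lambda_{k,\overline{l}_k}\|=\|d_{k,\overline{l}_k}\|$, which rests entirely on the trust region being inactive for $k\in\set K$; this identity is what welds the optimality-certificate branch to the distance-to-$x_*$ branch and makes the dichotomy work. Second, the clean estimate $\|x_{k+1}-x_*\|=O(\|x_k-x_*\|^2)+O(\nu_k^{\sigma_k})$ requires carefully untangling the implicit dependence of $\|\rho_k^{\set V}\|$ on $\|d_{k,\overline{l}_k}\|$ and of $\|d_{k,\overline{l}_k}\|$ on $\|x_{k+1}-x_*\|$; this is routine once one exploits $\nu_k^{\sigma_k}\to0$ to absorb the coupling, but it must be handled with care. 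Implicit throughout is the reading that $\sigma_k$ stays bounded away from $1$ on $\set K$, without which $\nu_k^{\sigma_k-1}$ need not tend to zero.
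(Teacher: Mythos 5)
Your proof is correct, but it takes a genuinely different logical route from the paper's. The paper argues by contradiction: it assumes both ratios stay above some $M>0$ along an infinite subset of $\set K$, uses $\nu_{k+1}/\nu_k>M$ together with the identity $d_{k,\overline{l}_k}=-H_k^{-1}G_{k,\overline{l}_k}\lambda_{k,\overline{l}_k}$ to get $\epsilon_k=O(\|d_{k,\overline{l}_k}\|)$, uses the second ratio bound and the triangle inequality to get $\epsilon_k=O(\|x_{k+1}-x_*\|)$, hence $\tau_{k,\overline{l}_k}=o(\|x_{k+1}-x_*\|)$, and then Theorems~\ref{theo:direction} and~\ref{theo:superlinear} force $\|x_{k+1}-x_*\|=O(\|x_k-x_*\|^2)$, contradicting the assumed lower bound on the second ratio. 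You instead run the argument forward: the same identity and the same two theorems give the explicit bounds $\nu_{k+1}/\nu_k\lesssim \|x_k-x_*\|/\nu_k+o(1)$ and $\|x_{k+1}-x_*\|/\|x_k-x_*\|\lesssim \nu_k^{\sigma_k}/\|x_k-x_*\|+o(1)$, and the observation that the product of the two competing terms equals $\nu_k^{\sigma_k-1}$ closes the argument via $\min\{a,b\}\le\sqrt{ab}$. Your version is in fact slightly stronger, since it yields a quantitative rate $O\bigl(\nu_k^{(\sigma_k-1)/2}\bigr)+o(1)$ for the minimum rather than mere convergence to zero. Both proofs rest on the same two pillars (inactivity of the trust region for $k\in\set K$, hence $\|H_k^{-1}G_{k,\overline{l}_k}\lambda_{k,\overline{l}_k}\|=\|d_{k,\overline{l}_k}\|$, plus Theorems~\ref{theo:direction} and~\ref{theo:superlinear}), and both share the implicit requirement you honestly flag, namely that $\sigma_k-1$ stays bounded away from zero on $\set K$ so that $\nu_k^{\sigma_k-1}\to0$; the paper's own proof needs exactly the same thing to conclude $(\epsilon_{k,\overline{l}_k})^{\sigma_k}=o(\|x_{k+1}-x_*\|)$, and in the implemented algorithm $\sigma_k\in\{1,1.5\}$, so this is harmless. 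The only step you should spell out is the absorption $\nu_k^{\sigma_k}\|x_k-x_*\|\le\nu_k^{2\sigma_k}+\|x_k-x_*\|^2$ needed to reach the clean estimate $\|x_{k+1}-x_*\|=O(\|x_k-x_*\|^2)+O(\nu_k^{\sigma_k})$ after bounding $\|d_{k,\overline{l}_k}\|$ by the triangle inequality, but that is routine.
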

\begin{proof}
Suppose, by contradiction, that there exist an infinite index set $\hat{\set K}\subset \set K$ and $M > 0$ such that
\begin{equation}\label{eq:absurd-main}
\min\left\{ \frac{\nu_{k+1}}{\nu_k} , \frac{\|x_{k+1} - x_*\|}{\|x_{k} - x_*\|} \right\} > M\text{, for all  } k\in \hat{\set K}\text.
\end{equation}
Therefore,
\begin{equation*}
\frac{\nu_{k+1}}{\nu_k} > M\text{, for all  } k\in \hat{\set K}\text,
\end{equation*}
which yields, by the way the algorithm was designed, that 
\begin{equation*}
\|H_k^{-1}G_{k,\overline{l}_k}\lambda_{k,\overline{l}_k}\| \geq M\nu_k\text{, for all  } k\in \hat{\set K}\text.  
\end{equation*}
Now, since $\epsilon_k = O(\nu_k)$ and $d_{k,\overline{l}_k} = -H_k^{-1}G_{k,\overline{l}_k}\lambda_{k,\overline{l}_k}$ is a valid relation for the primal-dual variables that solve the quadratic programming problem that appears in Step~2 (when the trust-region constraints are not active), we have that
\begin{equation*}
\epsilon_k = O(\|d_{k,\overline{l}_k}\|) = O(\|x_{k+1} - x_{k}\|)\text{, for all  } k\in \hat{\set K}\text.
\end{equation*}
Therefore, because~\eqref{eq:absurd-main} is assumed, we have
\begin{equation*}
\frac{\|x_{k+1} - x_{k}\|}{\|x_{k+1} - x_*\|} \leq 1 + \frac{\|x_k - x_*\|}{\|x_{k+1} - x_*\|} < 1 + \frac{1}{M} = \frac{M + 1}{M}\text,
\end{equation*}
which assures that $\epsilon_k = O(\|x_{k+1} - x_*\|)$. Consequently, since $\sigma_k > 1$, it yields that
\begin{equation*}
\tau_{k,\overline{l}_k} = O\left(\left(\epsilon_{k,\overline l_k}\right)^{\sigma_k}\right) = o(\|x_{k+1} - x_*\|)\text,
\end{equation*} 
and, by Theorem~\ref{theo:direction}, we see that
\begin{equation*}
\|\hat\rho_k\| = o(\|x_{k+1} - x_*\|)~~\text{and}~~\|\overline\rho_k\| = o(\|x_{k+1} - x_*\|),
\end{equation*}
which ensures, by Theorem~\ref{theo:superlinear}, that
\begin{equation*}
\|x_{k+1} - x_*\| = O(\| x_k - x_* \|^2) + o\left(\|x_{k+1} - x_*\|\right)\text{, for all  } k\in \hat{\set K}\text.
\end{equation*} 
So, for all $k\in \hat{\set K}$ sufficiently large, the following holds
\begin{equation*}
\|x_{k+1} - x_*\| = O(\| x_k - x_* \|^2)\text.
\end{equation*}
However, the above relation contradicts the initial assumption~\eqref{eq:absurd-main}. Therefore, we must have
\begin{equation*}
\min\left\{ \frac{\nu_{k+1}}{\nu_k} , \frac{\|x_{k+1} - x_*\|}{\|x_{k} - x_*\|} \right\} \underset{k\in \set K}{\rightarrow} 0\text.
\end{equation*}
\end{proof}


\begin{remark}\label{remark:3}
The local convergence results were developed assuming $r\in \{1,\hdots,n-1\}$. For the case $r = 0$, we have that the method is approaching a point for which the function $f$ is smooth in the whole neighborhood. For such a situation, it is straightforward to see that the direction $d_{k,\overline{l}_k}$ will have only the $\set U$-component, i.e., $d_{k,\overline{l}_k} = d_{k,\overline{l}_k}^{\set U}$ with $Z^\triangleleft_{x} = I_n$ for all $x$ around $x_*$. Now, considering $r = n$, we see that the method is approaching a point where $f$ is nonsmooth in any direction. For that case, it is also clear that the direction $d_{k,\overline{l}_k}$ will have only the $\set V$-component, i.e., $d_{k,\overline{l}_k} = d_{k,\overline{l}_k}^{\set V}$ with $A_k \equiv I_n$ for all $x_k$ around $x_*$. Therefore, in both cases, the result of Theorem~\ref{theo:key} will be preserved, but, for the case that $r = n$, the value $\sigma_k$ does not need to be strictly greater than one, i.e., in such a case Theorem~\ref{theo:key} holds for $\sigma_k = 1$.
\end{remark} 

\section{Numerical Results}

This section has the intent to illustrate the main local convergence results obtained. However, by no means we had the ambition to present an extensive set of tests nor to recommend our method over any other one. Here, our main goal is to provide the reader with proof-of-concept numerical results. 

All the problems were solved using Matlab in an Intel Core 2 Duo T6500, 2.10 GHz and 4 Gb of RAM. We have used \texttt{quadprog} as the tool for solving the quadratic minimizations needed in each iteration, setting \texttt{active-set} as the algorithmic choice and $10^{-12}$ as the tolerances \texttt{TolX} and \texttt{TolFun} and $10^{-8}$ (default value) as \texttt{TolCon}. Moreover, for all functions we have chosen random starting points such that $\|x_0\|_\infty \leq 2$ and solved each of them twenty times in order to have statistical relevance of the results.

We have solved each optimization problem with two algorithms: (i) the GS method presented by the original authors~\cite{BLO05} but with a nonnormalized search direction (a variant introduced by Kiwiel~\cite{KWL07}, that has the advantage to asymptotically recover the steepest descent method when applied to smooth functions) and (ii) the GraFuS method. We have used the original GS implementation without any modification (with the exception of using a nonnormalized search direction)\footnotemark\footnotetext{The GS code can be found at http://cs.nyu.edu/overton/papers/gradsamp/alg/.}. For completeness, we present the parameter values used in Algorithm~\ref{model-alg}: $m = 2n$; $\nu_0 = 10^{-6}$; $\epsilon_0 = 10^{-1}$; $\nu_\text{opt} = 10^{-6}$; $\epsilon_\text{opt} = 10^{-6}$; $\theta_\nu = 1$; $\theta_\epsilon = 10^{-1}$; $\gamma = 0.5$; $\beta = 0$ and $\alpha_k = 1$. 


The parameter values used in GraFuS were: $m = 2n$; $\nu_0 = 10^{-2}$; $\nu_\text{opt} = 10^{-6}$; $\gamma_\epsilon = 4$; $\gamma_\Delta = 4$; $\delta = 0.90$; $\varrho = 1.50$; $\rho = 10^{-8}$ and $\theta = 0.5$. The value of $\sigma_k$ in Step 1 was set as follows. We start the algorithm with $\sigma_0 = 1$ and, setting $|\lambda|_{\#}$ as the number of entries of $\lambda$ greater than $10^{-3}/(n+1)$, we have updated $\sigma_k$ every time a reduction on $\nu_k$ was performed in such a way that
\begin{equation*}
\sigma_{k+1} = \left\{
\begin{array}{cc}
1\text, & |\lambda_{k,\overline{l}_k}|_{\#} \geq n + 1\\
1.5\text, & \text{otherwise}
\end{array}\right.\text.
\end{equation*}
Notice that $|\lambda|_{\#} - 1$ tries to approximate the dimension of the subspace $\set V(x_*)$.    

An important aspect that we must recall here is that the iterations of GraFuS are more expensive than those of GS. While the GS routine finds a search direction and does an Armijo line search to find the next iterate, GraFuS constantly solves quadratic programming problems until it finds a good set of sampled points and a good trust region to move. Therefore, one could take advantage of the way GS was designed as a bootstrap to start performing GraFuS iterations, deciding if the current iterate is close to the solution indirectly by means of the size of the current sampling radius. As a result, we only start to run the GraFuS algorithm after the second reduction of the sampling radius in GS (i.e. when $\epsilon_k < 10^{-2}$), and that is the reason why in the figures that follow below, we see that in the first iterations both methods remain together. 

We also must stress that although the optimality certificates of Algorithms~\ref{model-alg}~and~\ref{vugs-alg} are very similar, they are not the same (specially because the quadratic programming problem of each method is different). Therefore, one might be more rigorous than the other one. Thus, although in most problems the GraFuS method appears to be closer to the solution, this does not mean that GS is not able to reach the same precision (maybe a tighter optimality parameter would allow it). 

Finally, the way we have chosen the matrices $H_k$ is a delicate matter and, for that reason, we have reserved the following subsection to explain our procedure. It is worth pointing out that we have used BFGS ideas to update the matrices, but we do not have any theoretical guarantee that the matrices $H_k$ will converge to a matrix of the form presented in~\eqref{eq:H-convergence}. Nevertheless, the choice on how we update the matrices has a strong foundation, since it uses the same reasoning of a Sequential Quadratic Programming (SQP) updating~\cite{GMS05} for the optimization problem that appears in~\eqref{eq:opt-phi}.  

\subsection{$H_k$ updates in GraFuS method}\label{subsec:Hk_update}

As we have seen in the last section, if some hypotheses are satisfied, it is possible to see the quadratic programming problem that is solved in every iteration of GraFuS as a smooth constrained optimization problem. Moreover, the matrix that we would like to approximate (at least in its null space) is the Hessian of~\eqref{eq:lagrangian}. Therefore, a natural attempt to reach that goal is to update the positive definite matrix $H_k$ as it is done in SQP routines. In other words, it would be desirable to have the following relation
\begin{equation*}
H_k(x_+ - x_-) = \nabla_{x} \mathcal{L}(x_+,\lambda_+) - \nabla_{x} \mathcal{L}(x_-,\lambda_-)\text,
\end{equation*}
where $\mathcal{L}$ is the Lagrangian function defined in~\eqref{eq:lagrangian} and $\lambda_+$ and $\lambda_-$ are vectors that try to approximate the multiplier $\lambda_*$ that fulfills~\eqref{eq:dev-reduced}. In addition,
\begin{equation*}
\begin{split}
\nabla_{x} \mathcal{L}(x,\lambda) & =  \nabla \phi_{r+1}(x) + \sum_{i=1}^r\lambda_i(\nabla \phi_i(x) - \nabla \phi_{r+1}(x)) \\
& = \left(1 - \sum_{i=1}^r\lambda_i\right)\nabla \phi_{r+1}(x) + \sum_{i=1}^r\lambda_i\nabla \phi_i(x)\text.
\end{split}
\end{equation*}
Therefore, defining $\hat\lambda\in \mathbb{R}^{r+1}$ as $\hat\lambda_i = \lambda_i$, for $i \in \{1,\hdots,r\}$, and
\begin{equation*} 
\hat\lambda_{r+1} = 1 - \sum_{i=1}^r\lambda_i\text,
\end{equation*}
we have $e^T\hat\lambda = 1$ and one can rewrite $\nabla_{x} \mathcal{L}(x,\lambda) = \hat G\hat \lambda$, where
\begin{equation*}
\hat G := [\nabla \phi_1(x)~\hdots \nabla \phi_{r+1}(x)]\text.
\end{equation*}
Hence, if in two fixed pairs $(k_+,l_+)$ and $(k_-,l_-)$ of (outer,~inner) iterations we have good sets of sampled points (in the sense that the conditions $i)$ and $ii)$ of Lemma~\ref{lemma:good-sample} are valid), it is natural to ask that the following secant relationship holds 
\begin{equation*}
H_k(x_{k_+} - x_{k_-}) = G_{k_+,l_+}\lambda_{k_+,l_+} - G_{k_-,l_-}\lambda_{k_-,l_-}\text.
\end{equation*}
The problem here is how one can identify a good set of sampled points. In fact, because of Lemma~\ref{lemma:good-sample}, we could say that all iterations in $\set K$ must produce a good set of sampled points, but to restrain the update of $H_k$ just for those iterations can lead us to very few updates during the execution of the method.  So, although there is no straightforward response, we know that a good set of sampled points is associated with a small norm of the convex combination of its gradients. Hence, a good strategy would be to update the matrix $H_k$ only if such a condition is verified.

Based on the previous reasoning, we present next the routine that provides the sequence of matrices $H_k$ that are used within GraFuS.
\vspace{0.5cm}
 
\begin{itemize}[leftmargin=1.5cm]
\item[Step 0.]{Start setting $H = I$ and let the GraFuS algorithm run until it finds two pairs $(k_+,l_+)$ and $(k_-,l_-)$ of (outer,~inner) iterations such that 
\begin{equation*}
\left\|G_{k_+,l_+}\lambda_{k_+,l_+}\right\|\leq \sqrt{\nu_{k_+}}\text{~~and~~}\left\|G_{k_-,l_-}\lambda_{k_-,l_-}\right\|\leq \sqrt{\nu_{k_-}}\text.
\end{equation*}
Set 
$$x_+ := x_{k_+} \text{~and~} x_- := x_{k_-}\text;$$ 
$$v_+ := G_{k_+,l_+}\lambda_{k_+,l_+} \text{~and~} v_- := G_{k_-,l_-}\lambda_{k_-,l_-}\text.$$
}
\item[Step 1.]{Set $p: = x_+ - x_-$ and $q := v_+ - v_-$. If $q^Tp < 0.2p^THp$ then compute a new vector $q$ by Powell's correction (see \cite[Subsection 18.2]{BGL06}).}
\item[Step 2.]{Update $H$:
\begin{equation*}
H \leftarrow H - \frac{Hpp^TH}{p^THp} + \frac{qq^T}{q^Tp}\text.
\end{equation*}
}
\item[Step 3.] Use the subsequent matrices $H_k$ as $H$ until the GraFuS algorithm finds another iteration $\hat k$ and an inner iteration $\hat l$ such that
\begin{equation*}
\left\|G_{\hat k,\hat l}\lambda_{\hat k,\hat l}\right\|\leq \sqrt{\nu_{\hat k}}\text.
\end{equation*}
Then, $x_- \leftarrow x_{+}$, $x_+ \leftarrow x_{\hat k}$, $v_- \leftarrow v_+$, $v_+ \leftarrow  G_{\hat k,\hat l}\lambda_{\hat k,\hat l}$. Go back to Step 1.  
\end{itemize} 
\vspace{0.5cm}

Clearly, other ways of updating $H_k$ are possible. Indeed, even the pure BFGS update as considered in~\cite{LEO13} can be performed (although, in such a case, we have to assume that for all iterates the function $f$ will be differentiable and Assumption~\ref{assump1} will no longer be satisfied). We believe that an improvement on the updating of $H_k$ may be an important advance on the performance of GraFuS. 

\subsection{Illustrative examples}

The functions that were solved to illustrate our algorithm are the following~\cite{HMM04}:
\begin{itemize}[leftmargin=1cm]
\item[\textbf{F1)}]{Chained CB3 I
\begin{equation*}
f(x) = \sum_{i=1}^{n-1} \max\left\{ x_i^4 + x_{i+1}^2, (2-x_i)^2 + (2-x_{i+1})^2, 2\exp(-x_i+x_{i+1}) \right\}\text;
\end{equation*}
}
\item[\textbf{F2)}]{Chained CB3 II 
\begin{equation*}
\begin{split}
f(x) = & \max\left\{ \sum_{i=1}^{n-1} \left( x_i^4 + x_{i+1}^2 \right) \right., \sum_{i=1}^{n-1} \left( (2-x_i)^2 + (2-x_{i+1})^2 \right),\\
& \left. \sum_{i=1}^{n-1} 2\exp(-x_i+x_{i+1}) \right\}\text;
\end{split}
\end{equation*}
}
\item[\textbf{F3)}]{Nonsmooth generalization of Brown function 2
\begin{equation*}
f(x) = \sum_{i = 1}^{n-1}\left( |x_i|^{x_{i+1}^2 + 1} + |x_{i+1}|^{x_i^2 + 1} \right);
\end{equation*}
}
\item[\textbf{F4)}]{Chained crescent I 
\begin{equation*}
\begin{split}
f(x) = & \max\left\{ \sum_{i=1}^{n-1} \left( x_i^2 + (x_{i+1} - 1)^2 + x_{i+1} - 1 \right) \right., \\ 
& \left. \sum_{i=1}^{n-1} \left( -x_i^2 - (x_{i+1} - 1)^2 + x_{i+1} + 1 \right) \right\}\text.
\end{split}
\end{equation*}
}
\end{itemize}

The first two functions are convex, whereas the last two ones are nonconvex functions. In addition, \textbf{F1} and \textbf{F3} satisfy $\set U(x_*) = \{0\}$, a condition that does not hold for \textbf{F2} and \textbf{F4}.

To observe the GraFuS functioning and to put it into perspective with GS, we have comparatively examined the CPU time and the number of iterations versus $f(x_k) - f_*$, where $f_*$ is set as the best function value obtained by the methods in all of the runs. Because of the nondeterministic nature of the methods, we have used the median and quartiles (25\% and 75\%) of the twenty runs. As a complementary tool for assessing how fast our method goes towards the optimal function value, in the plots with the number of iterations, we have represented the value
\begin{equation*}
\min\left\{\frac{f(x_{k+1}) - f_*}{f(x_k) - f_*},1\right\}
\end{equation*}
with color scales along the plotted curves of GraFuS, where a brighter hue stands for a value close to zero, and a darker color for the values near one. Notice that the values above $1$ must have a safeguard, because since our method might not be monotone, the ratio $(f(x_{k+1}) - f_*)/(f(x_k) - f_*)$ could be greater than one. The reader will see that a few increases on the function value appears in the figures that are shown below. This is due to the fact that the measure (quartiles) used to represent the twenty runs somehow absorbs the nonmonotone behavior of the function values.

Additionally, we have examined the values
\begin{equation*}
\frac{\nu_{k+1}}{\nu_k} ~~\text{and}~~ \frac{\|x_{k+1} - x_*\|}{\|x_{k} - x_*\|} 
\end{equation*}
for $k$ such that $\nu_{k+1} < \nu_k$. For this two measures, a detailed explanation must be given on how we have plotted the corresponding curves. As we have mentioned before, we have solved each function more than once. However, an iteration $k$ for which $\nu_{k+1} < \nu_k$ occurs is not necessarily the same iteration where a second run will have $\nu_{k+1} < \nu_k$. It is only possible to track these values for different runs if instead of looking at the iteration $k$, we monitor the actual occurrences of $\nu_{k+1} < \nu_k$. Therefore, we have proceeded in the following way. For each run, we set the $w$-dimensional vectors
\begin{equation*}
\text{vec}_\nu \leftarrow \left[\frac{\nu_{k_1+1}}{\nu_{k_1}},\hdots, \frac{\nu_{k_w+1}}{\nu_{k_w}}\right]~\text{and}~\text{vec}_{x_*} \leftarrow \left[\frac{\|x_{k_1+1} - x_*\|}{\|x_{k_1} - x_*\|},\hdots, \frac{\|x_{k_w+1} - x_*\|}{\|x_{k_w} - x_*\|}\right]\text,
\end{equation*}   
where $k_i$ is the iteration that for the $i$-th time, $\nu_{k_i+1} < \nu_{k_i}$ has occurred. Moreover, for the case that $w < 30$, we enlarge the vectors $\text{vec}_\nu$ and $\text{vec}_{x_*}$ by copying the last value of each vector, respectively, until it reaches 30 dimensions. This is necessary because not every run of GraFuS will give vectors with equal dimensions. Then, the quartiles are computed using the vectors $\text{vec}_\nu$ and $\text{vec}_{x_*}$ of each run. 
 
\begin{figure}[h!]
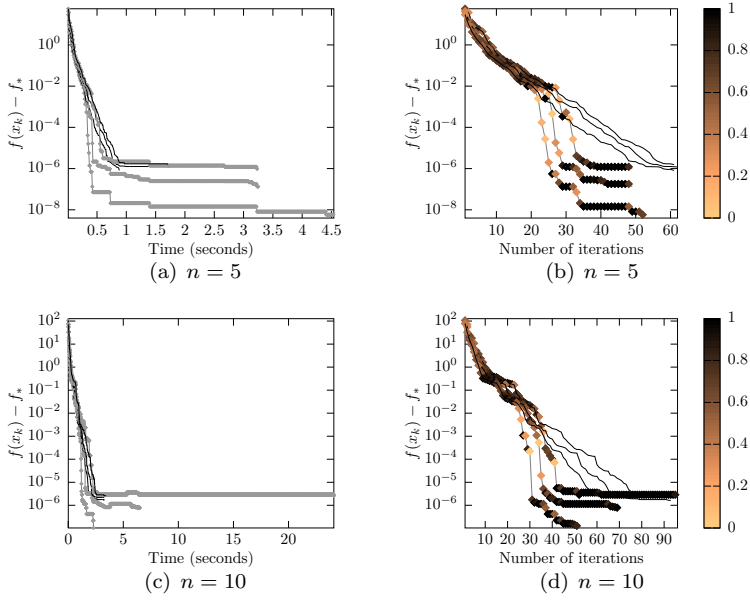

\centering
\subfigure[$n = 5$]{\resizebox{0.35\textwidth}{!}{\input{fig-chainedcb31-n5-time.tex}}}\hspace{0.6cm}
\subfigure[$n = 5$]{\resizebox{0.35\textwidth}{!}{\input{fig-chainedcb31-n5-it.tex}}}
\subfigure[$n = 10$]{\resizebox{0.35\textwidth}{!}{\input{fig-chainedcb31-n10-time.tex}}}\hspace{0.6cm}
\subfigure[$n = 10$]{\resizebox{0.35\textwidth}{!}{\input{fig-chainedcb31-n10-it.tex}}}
\caption{Medians and quartiles of twenty runs of GS and GraFuS methods for the function \textbf{F1}. The black line plots represent the GS method, whereas the grey/colored continuous ones with $\lozenge$ marks stand for GraFuS. For both $n = 5$ and $n = 10$, we have $x_* = e$.}
\label{fig:chainedcb31}
\end{figure}

\begin{figure}[h!]
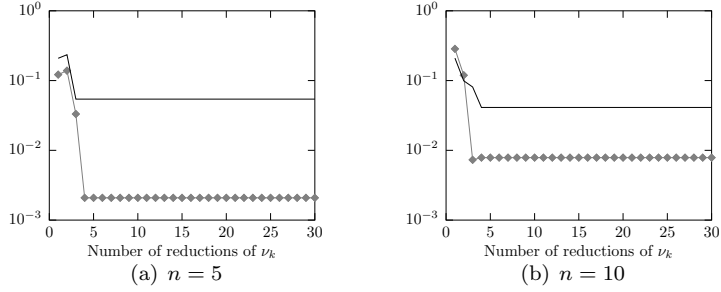

\centering
\subfigure[$n = 5$]{\resizebox{0.35\textwidth}{!}{\input{fig-chainedcb31-n5-dist.tex}}}\hspace{0.6cm}
\subfigure[$n = 10$]{\resizebox{0.35\textwidth}{!}{\input{fig-chainedcb31-n10-dist.tex}}}
\caption{The simple black line plot and the one with $\lozenge$ marks represent, respectively, the medians of the vectors $\text{vec}_\nu$ and $\text{vec}_{x_*}$ for the function \textbf{F1}. For both $n = 5$ and $n = 10$, we have $x_* = e$.}
\label{fig:chainedcb31-dist}
\end{figure}

In Figures~\ref{fig:chainedcb31} and~\ref{fig:chainedcb31-dist}, we see the results obtained by the runs related to the first function \textbf{F1}. It is possible to observe that GraFuS has a good performance in all the measures. Not only a high precision is achieved, but one can also see 
\begin{equation}\label{eq:seq-zero}
\min\left\{ \frac{\nu_{k+1}}{\nu_k} , \frac{\|x_{k+1} - x_*\|}{\|x_{k} - x_*\|} \right\}
\end{equation}
approaching zero, in accordance with the result of Theorem~\ref{theo:key}. On the other hand, although Figure~\ref{fig:chainedcb32} presents good results for the function \textbf{F2}, the sequence that appears in Theorem~\ref{theo:key} does not approach zero as fast as it happens for \textbf{F1} (see Figure~\ref{fig:chainedcb32-dist}). This has a reasonable explanation. Notice that the result of Theorem~\ref{theo:key} is conditioned by Theorem~\ref{theo:superlinear}, which has, as an assumption, that the matrices $H_k$ must converge to $H_*$ satisfying~\eqref{eq:H-convergence}. However, $H_*$ must converge to the Hessian of the Lagrangian only with respect to the subspace $\set U(x_*)$. Since \textbf{F1} has $\set U(x_*) = \{0\}$, the matrices $H_k$ do not need to contain any kind of second-order information to guarantee Theorem~\ref{theo:key} to hold, which is the reason why~\eqref{eq:seq-zero} approaches quickly to zero when GraFuS is applied to this function. However, \textbf{F2} has $\dim \set U(x_*) = n - 2$, which means that the result of Theorem~\ref{theo:key} will be conditioned to how good is the approximation of $H_k$ to $H_*$ at each iteration. 

\begin{figure}[h!]
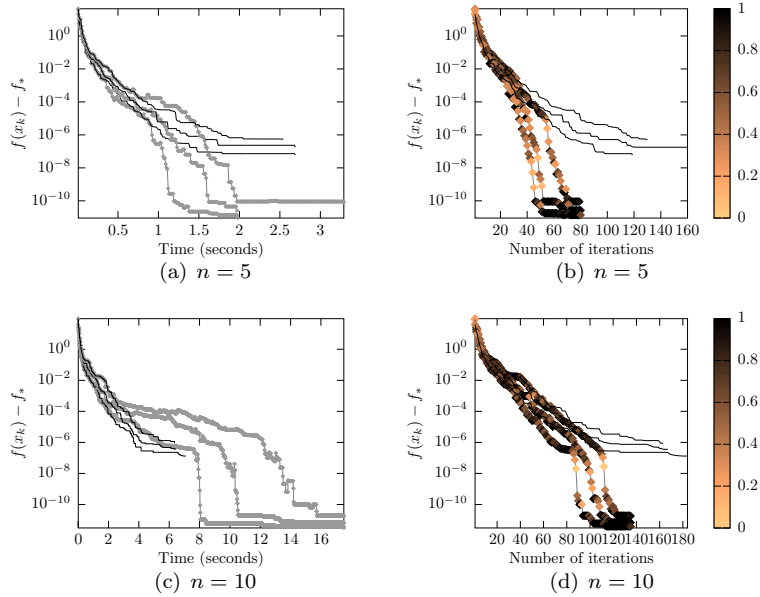

\centering
\subfigure[$n = 5$]{\resizebox{0.35\textwidth}{!}{\input{fig-chainedcb32-n5-time.tex}}}\hspace{0.6cm}
\subfigure[$n = 5$]{\resizebox{0.35\textwidth}{!}{\input{fig-chainedcb32-n5-it.tex}}}
\subfigure[$n = 10$]{\resizebox{0.35\textwidth}{!}{\input{fig-chainedcb32-n10-time.tex}}}\hspace{0.6cm}
\subfigure[$n = 10$]{\resizebox{0.35\textwidth}{!}{\input{fig-chainedcb32-n10-it.tex}}}
\caption{Medians and quartiles of twenty runs of GS and GraFuS methods for the function \textbf{F2}. The black line plots represent the GS method, whereas the grey/colored continuous ones with $\lozenge$ marks stand for GraFuS. For both $n = 5$ and $n = 10$, we have $x_* = e$.}
\label{fig:chainedcb32}
\end{figure}

\begin{figure}[h!]
\centering
\subfigure[$n = 5$]{\resizebox{0.35\textwidth}{!}{\input{fig-chainedcb32-n5-dist.tex}}}\hspace{0.6cm}
\subfigure[$n = 10$]{\resizebox{0.35\textwidth}{!}{\input{fig-chainedcb32-n10-dist.tex}}}
\caption{The simple black line plot and the one with $\lozenge$ marks represent, respectively, the medians of the vectors $\text{vec}_\nu$ and $\text{vec}_{x_*}$ for the function \textbf{F2}. For both $n = 5$ and $n = 10$, we have $x_* = e$.}
\label{fig:chainedcb32-dist}
\end{figure}

\begin{figure}[h!]
\centering
\subfigure[$n = 5$]{\resizebox{0.35\textwidth}{!}{\input{fig-brownfunction2-n5-time.tex}}}\hspace{0.6cm}
\subfigure[$n = 5$]{\resizebox{0.35\textwidth}{!}{\input{fig-brownfunction2-n5-it.tex}}}
\subfigure[$n = 10$]{\resizebox{0.35\textwidth}{!}{\input{fig-brownfunction2-n10-time.tex}}}\hspace{0.6cm}
\subfigure[$n = 10$]{\resizebox{0.35\textwidth}{!}{\input{fig-brownfunction2-n10-it.tex}}}
\caption{Medians and quartiles of twenty runs of GS and GraFuS methods for the function \textbf{F3}. The black line plots represent the GS method, whereas the grey/colored continuous ones with $\lozenge$ marks stand for GraFuS. For both $n = 5$ and $n = 10$, we have $x_* = 0$.}
\label{fig:brownfunction2}
\end{figure}

\begin{figure}[h!]
\centering
\subfigure[$n = 5$]{\resizebox{0.35\textwidth}{!}{\input{fig-brownfunction2-n5-dist.tex}}}\hspace{0.6cm}
\subfigure[$n = 10$]{\resizebox{0.35\textwidth}{!}{\input{fig-brownfunction2-n10-dist.tex}}}
\caption{The simple black line plot and the one with $\lozenge$ marks represent, respectively, the medians of the vectors $\text{vec}_\nu$ and $\text{vec}_{x_*}$ for the function \textbf{F3}. For both $n = 5$ and $n = 10$, we have $x_* = 0$.}
\label{fig:brownfunction2-dist}
\end{figure}

\begin{figure}[h!]
\centering
\subfigure[$n = 5$]{\resizebox{0.35\textwidth}{!}{\input{fig-chainedcrescent1-n5-time.tex}}}\hspace{0.6cm}
\subfigure[$n = 5$]{\resizebox{0.35\textwidth}{!}{\input{fig-chainedcrescent1-n5-it.tex}}}
\subfigure[$n = 10$]{\resizebox{0.35\textwidth}{!}{\input{fig-chainedcrescent1-n10-time.tex}}}\hspace{0.6cm}
\subfigure[$n = 10$]{\resizebox{0.35\textwidth}{!}{\input{fig-chainedcrescent1-n10-it.tex}}}
\caption{Medians and quartiles of twenty runs of GS and GraFuS methods for the function \textbf{F4}. The black line plots represent the GS method, whereas the grey/colored continuous ones with $\lozenge$ marks stand for GraFuS. For both $n = 5$ and $n = 10$, we have $x_* = 0$.}
\label{fig:chainedcrescent1}
\end{figure}

\begin{figure}[h!]
\centering
\subfigure[$n = 5$]{\resizebox{0.35\textwidth}{!}{\input{fig-chainedcrescent1-n5-dist.tex}}}\hspace{0.6cm}
\subfigure[$n = 10$]{\resizebox{0.35\textwidth}{!}{\input{fig-chainedcrescent1-n10-dist.tex}}}
\caption{The simple black line plot and the one with $\lozenge$ marks represent, respectively, the medians of the vectors $\text{vec}_\nu$ and $\text{vec}_{x_*}$ for the function \textbf{F4}. For both $n = 5$ and $n = 10$, we have $x_* = 0$.}
\label{fig:chainedcrescent1-dist}
\end{figure}

The function \textbf{F3} has some interesting features, since it does not admit a maximum representation. Indeed, let us consider the function $h(a,b) = a^{(1+b^2)}$, for $a\geq 0$. Then, it yields that
\begin{equation*}
\lim_{\varepsilon\downarrow 0}\frac{\partial h}{\partial a}(\varepsilon,\varepsilon) = \lim_{\varepsilon\downarrow 0}(1+\varepsilon^2)\varepsilon^{\varepsilon^2} = 1\text;
\end{equation*}
\begin{equation*}
\lim_{\varepsilon\downarrow 0}\frac{\partial h}{\partial a}(2^{-1/\varepsilon^3},\varepsilon) = \lim_{\varepsilon\downarrow 0} (1+\varepsilon^2)2^{-1/\varepsilon} = 0\text.
\end{equation*}
So, it is possible to see that any representation of \textbf{F3} that might involve a maximum of functions cannot have smooth functions. Therefore, this function does not satisfy the requirements of our convergence analysis. However, this does not prevent GraFuS to have a good performance (see Figures~\ref{fig:brownfunction2} and~\ref{fig:brownfunction2-dist}).

Finally, looking at the results obtained for the function \textbf{F4} in Figures~\ref{fig:chainedcrescent1} and~\ref{fig:chainedcrescent1-dist}, the analysis follows very closely the one that was presented for function \textbf{F2}. Since \textbf{F4} satisfies $\set U(x_*) = n - 1$, as depicted in Figure~\ref{fig:chainedcrescent1-dist}, the value~\eqref{eq:seq-zero} does not go to zero as quickly as it could be expected. Nevertheless, the results obtained in Figure~\ref{fig:chainedcrescent1} also show a good behavior of GraFuS.

\section{Conclusions} 

This manuscript presents an implementable algorithm for solving unconstrained nonsmooth and nonconvex optimization problems. Using the ideas of the Gradient Sampling algorithm and taking advantage of some notions developed over the years for the Bundle Method, we were able to produce an algorithm that, in some sense, can be viewed as a generalization of the well established Newton's (quasi-Newton) method for nonconvex nonsmooth unconstrained minimization.

Additionally, we believe that an important step has been taken in the direction of obtaining a rapid method for minimizing nonconvex and nonsmooth functions. It was shown that a rapid move towards the solution is a reliable behavior for some iterations of GraFuS. Moreover, at least for the illustrative examples considered in the numerical experiments, one can see that fast moves are not rare and can be expected for a reasonable amount of iterations. However, it must be stressed that the iterations of GraFuS are computationally expensive when compared to GS, and for this reason, the rapid behavior of GraFuS might not be translated to a faster method for some functions.

The matters of efficiency and applicability of the method have not been treated properly in this manuscript, since our aim here was, first, to produce a mathematical theory that would support a rapid convergence to a solution and second, to provide proof-of-concept numerical instances that corroborate the main theoretical results. There are many possibilities of improvements on the algorithm (e.g. different forms of updating the matrices $H_k$ and efficient ways of selecting the sampled points without affecting the global convergence) and we hope that future studies explore these possibilities.

Finally, we end these final remarks with two questions that naturally arise from some of the numerical results obtained in the previous section: 
\begin{itemize}
\item{under which conditions could we establish $\|H_k-H_*\| = O(\|x_k - x_*\|)$ in Theorem~\ref{theo:superlinear}?}
\item{would it be possible to have convergence results with more general assumptions?} 
\end{itemize}


\section*{Appendix}

The aim of this appendix is to show that the assumption $(\lambda_{k,\overline{l}_k})_i = 0$, whenever $i \notin \set I(x_*)$ and $k\in \set K$, with $\set K$ defined in~\eqref{eq:K-set}, is not necessary. For this goal, we will show that even without such an assumption, the results from the local convergence subsection remain the same.  

We divide our reasoning in two cases and remind the reader that we have assumed $\set I(x_*) = \{1,\hdots,r+1\}$:
\begin{itemize}[leftmargin=1cm]
\item[A1)]{The cardinality of $\set I(x_*)$ is $n+1$;}
\item[A2)]{The cardinality of $\set I(x_*)$ is $r+1$ with $r < n$.}
\end{itemize}

Suppose first that A1 holds and let us consider an iterate $x_k$ sufficiently close to $x_*$. Moreover, assume that $k\in \set K$, where $\set K$ is the index set defined in~\eqref{eq:K-set}. Then, looking at the optimization problem in~\eqref{eq:equivalent}, we see that any additional active constraint will generate an additional active constraint to~\eqref{eq:equivalent} in a way that it will be a linear combination of the first $n+1$ active constraints (by Remark~\ref{remark:1} and because the rank of $\tilde{J}_{k}$ remains constant in a close neighborhood of $x_*$). Hence, the solution obtained with, or without, this additional constraint is the same, which yields that the results presented at the local convergence subsection do not change for this special case.  

So, let us consider the more intricate case A2. Moreover, let us assume that there is only one additional constraint, i.e., the number of active constraints is $r+2$ (we will see that the occurrence of more than one additional constraint will be a straightforward generalization of this simpler case). In other words, we are saying that solving~\eqref{eq:qp-max} is equivalent to minimize
\begin{equation*}
\begin{split}
\min_{\left(d,z\right)\in \mathbb{R}^{n+1}~~} & z + \frac{1}{2}d^TH_kd \\
\text{s.t.~~} & f\left(x_{k,i}^{\overline{l}_k}\right) + \nabla f\left(x_{k,i}^{\overline{l}_k}\right)^T\left(x_k + d - x_{k,i}^{\overline{l}_k}\right) = z_\text,~~1\leq i\leq r+2\text,
\end{split} 
\end{equation*}
where here we assume that rearrangements were done in order to have the additional constraint as the $(r+2)$-th constraint and that it has the associated sampled point $x_{k,r+2}^{\overline{l}_k}$. Therefore, for an iterate $x_k$ sufficiently close to the solution and a sufficiently small sampling radius, we have, by the continuity of the functions $\phi_i$, that only the functions $\phi_1,\hdots,\phi_{r+1}$ can assume the maximum at any sampled point. So, there exists $j\in \{1,\hdots,r+1\}$ such that $f(x_{k,r+2}^{\overline{l}_k}) = \phi_j(x_{k,r+2}^{\overline{l}_k})$. Consequently, recalling that $k\in \set K$, the above minimization problem can be seen as
\begin{equation*}
\begin{split}
\min_{\left(d,z\right)\in \mathbb{R}^{n+1}~~} & z + \frac{1}{2}d^TH_kd \\
\text{s.t.~~} & \phi_i\left(x_{k,i}^{\overline{l}_k}\right) + \nabla \phi_i\left(x_{k,i}^{\overline{l}_k}\right)^T\left(x_k + d - x_{k,i}^{\overline{l}_k}\right) = z_\text,~~1\leq i\leq r+1\\
& \phi_j\left(x_{k,r+2}^{\overline{l}_k}\right) + \nabla \phi_j\left(x_{k,r+2}^{\overline{l}_k}\right)^T\left(x_k + d - x_{k,r+2}^{\overline{l}_k}\right) = z_\text,
\end{split} 
\end{equation*}
whose dual optimization problem is written as
\begin{equation}\label{eq:uncon}
\begin{split}
\max_{\lambda\in \mathbb{R}^{r+2}~~} & \sum_{i=1}^{r+1} \lambda_i\left[ \phi_i\left(x_{k,i}^{\overline{l}_k}\right) + \nabla \phi_i\left(x_{k,i}^{\overline{l}_k}\right)^T\left(x_k - x_{k,i}^{\overline{l}_k}\right) \right] \\ 
&+ \lambda_{r+2}\left[ \phi_j\left(x_{k,r+2}^{\overline{l}_k}\right) + \nabla \phi_j\left(x_{k,r+2}^{\overline{l}_k}\right)^T\left(x_k - x_{k,r+2}^{\overline{l}_k}\right) \right] \\ & - \frac{1}{2}\left\| \sum_{i=1}^{r+1} \lambda_i \nabla \phi_i(x_{k,i}^{\overline{l}_k}) + \lambda_{r+2} \nabla \phi_j(x_{k,r+2}^{\overline{l}_k}) \right\|_{H_k^{-1}}^2\\
\text{s.t.~~} & e^T\lambda = 1\text.
\end{split} 
\end{equation}
Therefore, we can turn this last constrained maximization problem into an unconstrained one by making the following substitution $\lambda_{r+2} = 1 - \sum_{i=1}^{r+1} \lambda_i$. So, we have
\begin{equation*}
\begin{split}
\max_{\lambda\in \mathbb{R}^{r+1}~~} & \sum_{i=1}^{r+1} \lambda_i\left[ \phi_i\left(x_{k,i}^{\overline{l}_k}\right) + \nabla \phi_i\left(x_{k,i}^{\overline{l}_k}\right)^T\left(x_k - x_{k,i}^{\overline{l}_k}\right) - \phi_j\left(x_{k,r+2}^{\overline{l}_k}\right) \right.\\
&\left. - \nabla \phi_j\left(x_{k,r+2}^{\overline{l}_k}\right)^T\left(x_k - x_{k,r+2}^{\overline{l}_k}\right) \right] + \phi_j\left(x_{k,r+2}^{\overline{l}_k}\right) \\ 
& + \nabla \phi_j\left(x_{k,r+2}^{\overline{l}_k}\right)^T\left(x_k - x_{k,r+2}^{\overline{l}_k}\right) \\
& - \frac{1}{2}\left\| \sum_{i=1}^{r+1} \lambda_i \left[\nabla \phi_i(x_{k,i}^{\overline{l}_k}) - \nabla \phi_j(x_{k,r+2}^{\overline{l}_k}) \right] + \nabla \phi_j(x_{k,r+2}^{\overline{l}_k}) \right\|_{H_k^{-1}}^2\text.\\
\end{split} 
\end{equation*}

Since the above problem is concave and smooth, its solution $\overline{\lambda}\in \mathbb{R}^{r+1}$ can be obtained by equalling the derivative of the objective function to the null vector. Consequently, assuming without loss of generality that the function $\phi_j$ involved in the additional constraint is $\phi_{r+1}$, we have
\begin{tiny}
\begin{equation*}
\begin{split}
&\left(
\begin{array}{c}
\nabla \phi_1\left( x_{k,1}^{\overline{l}_k} \right)^T - \nabla \phi_{r+1}\left(x_{k,r+2}^{\overline{l}_k}\right)^T\\
\vdots\\
\nabla \phi_{r+1}\left( x_{k,r+1}^{\overline{l}_k} \right)^T - \nabla \phi_{r+1}\left(x_{k,r+2}^{\overline{l}_k}\right)^T
\end{array}\right)H_k^{-1}
\left(
\begin{array}{c}
\nabla \phi_1\left( x_{k,1}^{\overline{l}_k} \right)^T - \nabla \phi_{r+1}\left(x_{k,r+2}^{\overline{l}_k}\right)^T\\
\vdots\\
\nabla \phi_{r+1}\left( x_{k,r+1}^{\overline{l}_k} \right)^T - \nabla \phi_{r+1}\left(x_{k,r+2}^{\overline{l}_k}\right)^T
\end{array}\right)^T\overline{\lambda} = \\
& \left(\begin{array}{c}
\phi_1\left(x_{k,1}^{\overline{l}_k}\right) + \nabla \phi_1\left(x_{k,1}^{\overline{l}_k}\right)^T\left(x_k - x_{k,1}^{\overline{l}_k}\right) \\
\vdots\\
\phi_{r+1}\left(x_{k,r+1}^{\overline{l}_k}\right) + \nabla \phi_{r+1}\left(x_{k,r+1}^{\overline{l}_k}\right)^T\left(x_k - x_{k,r+1}^{\overline{l}_k}\right)
\end{array}\right) \\
& - \left(\begin{array}{c}
\phi_{r+1}\left(x_{k,r+2}^{\overline{l}_k}\right) + \nabla \phi_{r+1}\left(x_{k,r+2}^{\overline{l}_k}\right)^T\left(x_k - x_{k,r+2}^{\overline{l}_k}\right)\\
\vdots\\
\phi_{r+1}\left(x_{k,r+2}^{\overline{l}_k}\right) + \nabla \phi_{r+1}\left(x_{k,r+2}^{\overline{l}_k}\right)^T\left(x_k - x_{k,r+2}^{\overline{l}_k}\right)
\end{array}\right) \\
&-\left(
\begin{array}{c}
\nabla \phi_1\left( x_{k,1}^{\overline{l}_k} \right)^T - \nabla \phi_{r+1}\left(x_{k,r+2}^{\overline{l}_k}\right)^T\\
\vdots\\
\nabla \phi_{r+1}\left( x_{k,r+1}^{\overline{l}_k} \right)^T - \nabla \phi_{r+1}\left(x_{k,r+2}^{\overline{l}_k}\right)^T
\end{array}\right)H_k^{-1}\nabla \phi_{r+1}\left(x_{k,r+2}^{\overline{l}_k}\right) \text. 
\end{split}
\end{equation*}
\end{tiny}
Now, changing the points $x_{k,r+2}^{\overline{l}_k}$ for $x_{k,r+1}^{\overline{l}_k}$ and redefining 
\begin{equation*}
\tau_{k,\overline{l}_k} := \max_{1\leq i\leq r+2}\left\| x_{k,i}^{\overline{l}_k} - x_k\right\|\text,
\end{equation*}
we get
\begin{tiny}
\begin{equation*}
\begin{split}
&\left(
\begin{array}{c}
\nabla \phi_1\left( x_{k,1}^{\overline{l}_k} \right)^T - \nabla \phi_{r+1}\left(x_{k,r+1}^{\overline{l}_k}\right)^T\\
\vdots\\
\nabla \phi_{r}\left( x_{k,r}^{\overline{l}_k} \right)^T - \nabla \phi_{r+1}\left(x_{k,r+1}^{\overline{l}_k}\right)^T\\
0^T
\end{array}\right)H_k^{-1}
\left(
\begin{array}{c}
\nabla \phi_1\left( x_{k,1}^{\overline{l}_k} \right)^T - \nabla \phi_{r+1}\left(x_{k,r+1}^{\overline{l}_k}\right)^T\\
\vdots\\
\nabla \phi_{r}\left( x_{k,r}^{\overline{l}_k} \right)^T - \nabla \phi_{r+1}\left(x_{k,r+1}^{\overline{l}_k}\right)^T\\
0^T
\end{array}\right)^T\overline{\lambda} = \\
& \left(\begin{array}{c}
\phi_1\left(x_{k,1}^{\overline{l}_k}\right) + \nabla \phi_1\left(x_{k,1}^{\overline{l}_k}\right)^T\left(x_k - x_{k,1}^{\overline{l}_k}\right) - \phi_{r+1}\left(x_{k,r+1}^{\overline{l}_k}\right) - \nabla \phi_{r+1}\left(x_{k,r+1}^{\overline{l}_k}\right)^T\left(x_k - x_{k,r+1}^{\overline{l}_k}\right)\\
\vdots\\
\phi_{r}\left(x_{k,r}^{\overline{l}_k}\right) + \nabla \phi_{r}\left(x_{k,r}^{\overline{l}_k}\right)^T\left(x_k - x_{k,r}^{\overline{l}_k}\right) - \phi_{r+1}\left(x_{k,r+1}^{\overline{l}_k}\right) - \nabla \phi_{r+1}\left(x_{k,r+1}^{\overline{l}_k}\right)^T\left(x_k - x_{k,r+1}^{\overline{l}_k}\right)\\
0^T
\end{array}\right)\\
&-\left(
\begin{array}{c}
\nabla \phi_1\left( x_{k,1}^{\overline{l}_k} \right)^T - \nabla \phi_{r+1}\left(x_{k,r+1}^{\overline{l}_k}\right)^T\\
\vdots\\
\nabla \phi_{r}\left( x_{k,r}^{\overline{l}_k} \right)^T - \nabla \phi_{r+1}\left(x_{k,r+1}^{\overline{l}_k}\right)^T\\
0^T
\end{array}\right)H_k^{-1}\nabla \phi_{r+1}\left(x_{k,r+1}^{\overline{l}_k}\right) + O\left(\tau_{k,\overline{l}_k}\right) \text. 
\end{split}
\end{equation*}
\end{tiny}
This last linear system yields
\begin{tiny}
\begin{equation*}
\begin{split}
&\left(
\begin{array}{c}
\nabla \phi_1\left( x_{k,1}^{\overline{l}_k} \right)^T - \nabla \phi_{r+1}\left(x_{k,r+1}^{\overline{l}_k}\right)^T\\
\vdots\\
\nabla \phi_{r}\left( x_{k,r}^{\overline{l}_k} \right)^T - \nabla \phi_{r+1}\left(x_{k,r+1}^{\overline{l}_k}\right)^T
\end{array}\right)H_k^{-1}
\left(
\begin{array}{c}
\nabla \phi_1\left( x_{k,1}^{\overline{l}_k} \right)^T - \nabla \phi_{r+1}\left(x_{k,r+1}^{\overline{l}_k}\right)^T\\
\vdots\\
\nabla \phi_{r}\left( x_{k,r}^{\overline{l}_k} \right)^T - \nabla \phi_{r+1}\left(x_{k,r+1}^{\overline{l}_k}\right)^T
\end{array}\right)^T
\left(\begin{array}{c}
\overline\lambda_1\\
\vdots\\
\overline\lambda_r
\end{array}\right) = \\
& \left(\begin{array}{c}
\phi_1\left(x_{k,1}^{\overline{l}_k}\right) + \nabla \phi_1\left(x_{k,1}^{\overline{l}_k}\right)^T\left(x_k - x_{k,1}^{\overline{l}_k}\right) - \phi_{r+1}\left(x_{k,r+1}^{\overline{l}_k}\right) - \nabla \phi_{r+1}\left(x_{k,r+1}^{\overline{l}_k}\right)^T\left(x_k - x_{k,r+1}^{\overline{l}_k}\right)\\
\vdots\\
\phi_{r}\left(x_{k,r}^{\overline{l}_k}\right) + \nabla \phi_{r}\left(x_{k,r}^{\overline{l}_k}\right)^T\left(x_k - x_{k,r}^{\overline{l}_k}\right) - \phi_{r+1}\left(x_{k,r+1}^{\overline{l}_k}\right) - \nabla \phi_{r+1}\left(x_{k,r+1}^{\overline{l}_k}\right)^T\left(x_k - x_{k,r+1}^{\overline{l}_k}\right)
\end{array}\right)\\
&-\left(
\begin{array}{c}
\nabla \phi_1\left( x_{k,1}^{\overline{l}_k} \right)^T - \nabla \phi_{r+1}\left(x_{k,r+1}^{\overline{l}_k}\right)^T\\
\vdots\\
\nabla \phi_{r}\left( x_{k,r}^{\overline{l}_k} \right)^T - \nabla \phi_{r+1}\left(x_{k,r+1}^{\overline{l}_k}\right)^T
\end{array}\right)H_k^{-1}\nabla \phi_{r+1}\left(x_{k,r+1}^{\overline{l}_k}\right) + O\left(\tau_{k,\overline{l}_k}\right) \text. 
\end{split}
\end{equation*}
\end{tiny}

Therefore, following the same reasoning used by us to get here, it is possible to see that the first $r$ components of the dual variable $\hat\lambda \in \mathbb{R}^{r+1}$ linked to the problem~\eqref{eq:just-active} must satisfy the last linear system obtained above (not considering the remaining error vector) and, moreover,
\begin{equation}\label{eq:c-lambda}
\hat\lambda_{r+1} = 1 - \sum_{i=1}^r \hat\lambda_i\text.
\end{equation} 
Therefore, considering $\lambda^*\in \mathbb{R}^{r+2}$ the solution of~\eqref{eq:uncon} and using equation~\eqref{eq:c-lambda}, we must have 
\begin{equation*}
\lambda^* = \left(\begin{array}{c}
\hat\lambda_1\\
\vdots\\
\hat\lambda_r\\
\lambda^*_{r+1}\\
1-\sum_{i=1}^r \hat\lambda_i - \lambda^*_{r+1}
\end{array}\right) + O\left(\tau_{k,\overline{l}_k}\right) = 
\left(\begin{array}{c}
\hat\lambda_1\\
\vdots\\
\hat\lambda_r\\
\lambda^*_{r+1}\\
\hat\lambda_{r+1} - \lambda^*_{r+1}
\end{array}\right) + O\left(\tau_{k,\overline{l}_k}\right)
\text.
\end{equation*}
So, to complete our reasoning, we write the following relation between the primal-dual variables
\begin{equation*}
\begin{split}
d_{k,\overline{l}_k} & = -H_k^{-1}\left[\sum_{i=1}^{r+1}\lambda_i^*\nabla \phi_i(x_{k,i}^{\overline{l}_k}) + \lambda^*_{r+2}\nabla \phi_{r+1}(x_{k,r+2}^{\overline{l}_k}) \right] \\
& = -H_k^{-1}\left[\sum_{i=1}^{r}\lambda^*_i\nabla \phi_i(x_{k,i}^{\overline{l}_k}) + \left(\lambda^*_{r+1} + \lambda^*_{r+2}\right)\nabla \phi_{r+1}(x_{k,r+1}^{\overline{l}_k}) \right] + O\left(\tau_{k,\overline{l}_k}\right) \\
& = -H_k^{-1}\sum_{i=1}^{r+1}\hat\lambda_i\nabla \phi_i(x_{k,i}^{\overline{l}_k}) + O\left(\tau_{k,\overline{l}_k}\right)\text. 
\end{split}
\end{equation*} 
Hence, $d_{k,\overline{l}_k}$ is exactly the search direction obtained in~\eqref{eq:just-active} with an additional error vector. Therefore, the term $O\left(\tau_{k,\overline{l}_k}\right)$ is absorbed by the other error vectors in Theorem~\ref{theo:superlinear} and the result is still valid.

Finally, remember that we have considered just one additional active constraint to the others $r+1$ active constraints. However, it is straightforward to see that exactly the same reasoning can be used to prove the result for any other number of additional constraints.


\bibliographystyle{siamplain}
\bibliography{ref}

\begin{thebibliography}{10}

\bibitem{BAW75}
{\sc M.~L. Balinski and P.~Wolfe}, {\em Nondifferentiable {O}ptimization},
  vol.~3, Math. Programming Studies., USA, 1975.

\bibitem{BGL06}
{\sc J.~F. Bonnans, J.~C. Gilbert, C.~Lemar{\'e}chal, and C.~A.
  Sagastiz{\'a}bal}, {\em Numerical optimization: theoretical and practical
  aspects}, Springer-Verlag Berlin Heidelberg, 2nd~ed., 2006.

\bibitem{BOV04}
{\sc S.~Boyd and L.~Vandenberghe}, {\em Convex Optimization}, Cambridge
  University Press, New York, 2004.

\bibitem{BLO02a}
{\sc J.~V. Burke, A.~S. Lewis, and M.~L. Overton}, {\em Approximating
  subdifferentials by random sampling of gradients}, Mathematics of Operations
  Research, 27 (2002), pp.~567--584.

\bibitem{BLO05}
{\sc J.~V. Burke, A.~S. Lewis, and M.~L. Overton}, {\em A robust gradient
  sampling algorithm for nonsmooth, nonconvex optimization}, SIAM Journal on
  Optimization, 15 (2005), pp.~751--779.

\bibitem{CRK90}
{\sc F.~H. Clarke}, {\em Optimization and nonsmooth analysis}, vol.~5, SIAM,
  Montreal, Canada, 1990.

\bibitem{CRK98}
{\sc F.~H. Clarke, Y.~S. Ledyaev, R.~J. Stern, and P.~R. Wolenski}, {\em
  Nonsmooth analysis and control theory}, vol.~178, Springer, New York, 2008.

\bibitem{CUO12}
{\sc F.~E. Curtis and M.~L. Overton}, {\em A sequential quadratic programming
  algorithm for nonconvex, nonsmooth constrained optimization}, SIAM Journal on
  Optimization, 22 (2012), pp.~474--500.

\bibitem{CUQ13}
{\sc F.~E. Curtis and X.~Que}, {\em An adaptive gradient sampling algorithm for
  non-smooth optimization}, Optimization Methods and Software, 28 (2013),
  pp.~1302--1324.

\bibitem{CUQ15}
{\sc F.~E. Curtis and X.~Que}, {\em A quasi-{N}ewton algorithm for nonconvex,
  nonsmooth optimization with global convergence guarantees}, Mathematical
  Programming Computation, 7 (2015), pp.~399--428.

\bibitem{DSS09}
{\sc A.~Daniilidis, C.~Sagastiz\'{a}bal, and M.~Solodov}, {\em Identifying
  structure of nonsmooth convex functions by the bundle technique}, SIAM
  Journal on Optimization, 20 (2009), pp.~820--840.

\bibitem{DOA12}
{\sc T.-M.-T. Do and T.~Arti{\`e}res}, {\em Regularized bundle methods for
  convex and non-convex risks}, The Journal of Machine Learning Research, 13
  (2012), pp.~3539--3583.

\bibitem{DSD09}
{\sc D.~Dotta, A.~S. Silva, and I.~C. Decker}, {\em Design of power system
  controllers by nonsmooth, nonconvex optimization}, in Power Energy Society
  General Meeting, 2009. PES '09. IEEE, 2009, pp.~1--7.

\bibitem{DUP13}
{\sc D.-Z. Du and P.~M. Pardalos}, {\em Minimax and applications}, vol.~4,
  Springer US, 2013.

\bibitem{FGG04}
{\sc A.~Fuduli, M.~Gaudioso, and G.~Giallombardo}, {\em A {D}{C} piecewise
  affine model and a bundling technique in nonconvex nonsmooth minimization},
  Optimization Methods and Software, 19 (2004), pp.~89--102.

\bibitem{GGM09}
{\sc M.~Gaudioso, E.~Gorgone, and M.~F. Monaco}, {\em Piecewise linear
  approximations in nonconvex nonsmooth optimization}, Numerische Mathematik,
  113 (2009), pp.~73--88.

\bibitem{GMS05}
{\sc P.~E. Gill, W.~Murray, and M.~A. Saunders}, {\em S{N}{O}{P}{T}: An
  {S}{Q}{P} algorithm for large-scale constrained optimization}, SIAM Review,
  47 (2005), pp.~99--131.

\bibitem{GOL77}
{\sc A.~A. Goldstein}, {\em Optimization of {L}ipschitz continuous functions},
  Mathematical Programming, 13 (1977), pp.~14--22.

\bibitem{GRM98}
{\sc A.~Grothey and K.~McKinnon}, {\em A superlinearly convergent trust region
  bundle method}, Report, Department of Mathematics \& Statistics, Edinburgh
  University,  (1998).

\bibitem{HMM04}
{\sc M.~Haarala, K.~Miettinen†, and M.~M. M{\"a}kel{\"a}‡}, {\em New
  limited memory bundle method for large-scale nonsmooth optimization},
  Optimization Methods and Software, 19 (2004), pp.~673--692.

\bibitem{HSS16}
{\sc E.~S. Helou, S.~A. Santos, and L.~E.~A. Sim{\~o}es}, {\em On the
  differentiability check in gradient sampling methods}, Optimization Methods
  and Software, 31 (2016), pp.~983--1007.

\bibitem{HSS17}
{\sc E.~S. Helou, S.~A. Santos, and L.~E.~A. Sim{\~o}es}, {\em On the local
  convergence analysis of the gradient sampling method for finite
  max-functions}, Journal of Optimization Theory and Applications (to appear),
  (2017).

\bibitem{HUL93}
{\sc J.-B. Hiriart-Urruty and C.~Lemar{\'e}chal}, {\em Convex analysis and
  minimization algorithms I}, Springer Verlag, New York, 1993.

\bibitem{HUB82}
{\sc G.~Huber}, {\em Gamma function derivation of n-sphere volumes}, The
  American Mathematical Monthly, 89 (1982), pp.~301--302.

\bibitem{KEL60}
{\sc J.~E. Kelley, Jr}, {\em The cutting-plane method for solving convex
  programs}, Journal of the Society for Industrial and Applied Mathematics, 8
  (1960), pp.~703--712.

\bibitem{KWL85}
{\sc K.~C. Kiwiel}, {\em Methods of descent for nondifferentiable
  optimization}, vol.~1133, Springer Berlin Heidelberg, 1985.

\bibitem{KWL96}
{\sc K.~C. Kiwiel}, {\em Restricted step and {L}evenberg{-}{-}{M}arquardt
  techniques in proximal bundle methods for nonconvex nondifferentiable
  optimization}, SIAM Journal on Optimization, 6 (1996), pp.~227--249.

\bibitem{KWL07}
{\sc K.~C. Kiwiel}, {\em Convergence of the gradient sampling algorithm for
  nonsmooth nonconvex optimization}, SIAM Journal on Optimization, 18 (2007),
  pp.~379--388.

\bibitem{LEM82}
{\sc C.~Lemar{\'e}chal and R.~Mifflin}, {\em Global and superlinear convergence
  of an algorithm for one-dimensional minimization of convex functions},
  Mathematical Programming, 24 (1982), pp.~241--256.

\bibitem{LOS00}
{\sc C.~Lemar{\'e}chal, F.~Oustry, and C.~Sagastiz{\'a}bal}, {\em The
  {U}-{L}agrangian of a convex function}, Transactions of the American
  Mathematical Society, 352 (2000), pp.~711--729.

\bibitem{LES97}
{\sc C.~Lemar{\'e}chal and C.~Sagastiz{\'a}bal}, {\em Practical aspects of the
  moreau--yosida regularization: Theoretical preliminaries}, SIAM Journal on
  Optimization, 7 (1997), pp.~367--385.

\bibitem{LEW02}
{\sc A.~S. Lewis}, {\em Active sets, nonsmoothness, and sensitivity}, SIAM
  Journal on Optimization, 13 (2002), pp.~702--725.

\bibitem{LEO13}
{\sc A.~S. Lewis and M.~L. Overton}, {\em Nonsmooth optimization via
  quasi-{N}ewton methods}, Mathematical Programming, 141 (2013), pp.~135--163.

\bibitem{LUV98}
{\sc L.~Luk\v{s}an and J.~Vl\v{c}ek}, {\em A bundle-{N}ewton method for
  nonsmooth unconstrained minimization}, Mathematical Programming, 83 (1998),
  pp.~373--391.

\bibitem{MAK02}
{\sc M.~M{\"a}kel{\"a}}, {\em Survey of bundle methods for nonsmooth
  optimization}, Optimization Methods and Software, 17 (2002), pp.~1--29.

\bibitem{MAR78}
{\sc N.~Maratos}, {\em Exact penalty function algorithms for finite dimensional
  and control optimization problems}, PhD thesis, Imperial College, London,
  1978.

\bibitem{MAY09}
{\sc P.~Mar{\'e}chal and J.~J. Ye}, {\em Optimizing condition numbers}, SIAM
  Journal on Optimization, 20 (2009), pp.~935--947.

\bibitem{MIS99}
{\sc R.~Mifflin and C.~Sagastiz{\'a}bal}, {\em V{U}-decomposition derivatives
  for convex max-functions}, in Ill-posed Variational Problems and
  Regularization Techniques, M.~Th{\'e}ra and R.~Tichatschke, eds., vol.~477 of
  Lecture Notes in Economics and Mathematical Systems, Springer Berlin
  Heidelberg, 1999, pp.~167--186.

\bibitem{MIS05}
{\sc R.~Mifflin and C.~Sagastiz{\'a}bal}, {\em A {V}{U}-algorithm for convex
  minimization}, Mathematical Programming, 104 (2005), pp.~583--608.

\bibitem{MIS12}
{\sc R.~Mifflin and C.~Sagastiz{\'a}bal}, {\em A science fiction story in
  nonsmooth optimization originating at {IIASA}}, in Documenta Mathematica
  Optimization Stories, M.~Gr\"{o}tschel, ed., Deutschen
  Mathematiker-Vereinigung, Bielefeld, 2012, pp.~291--300.

\bibitem{MIM05}
{\sc S.~A. Miller and J.~Malick}, {\em Newton methods for nonsmooth convex
  minimization: connections among {$\mathcal{U}$-}{L}agrangian, {R}iemannian
  {N}ewton and {S}{Q}{P} methods}, Mathematical programming, 104 (2005),
  pp.~609--633.

\bibitem{MOP14}
{\sc J.~J. Moreau and P.~D. Panagiotopoulos}, {\em Nonsmooth mechanics and
  applications}, vol.~302, Springer, Vienna, 2014.

\bibitem{OLS14}
{\sc W.~Oliveira and C.~Sagastiz{\'a}bal}, {\em Bundle methods in the
  {X}{X}{I}st century: A bird's-eye view}, Pesquisa Operacional, 34 (2014),
  pp.~647--670.

\bibitem{OKZ13}
{\sc J.~Outrata, M.~Ko\v{c}vara, and J.~Zowe}, {\em Nonsmooth approach to
  optimization problems with equilibrium constraints: theory, applications and
  numerical results}, vol.~28, Kluwer Academic Publishers, The Netherlands,
  2013.

\bibitem{PJS10}
{\sc C.~Peng, X.~Jin, and M.~Shi}, {\em Epidemic threshold and immunization on
  generalized networks}, Physica A: Statistical Mechanics and its Applications,
  389 (2010), pp.~549--560.

\bibitem{WAC09}
{\sc F.-C. Wang and H.-T. Chen}, {\em Design and implementation of fixed-order
  robust controllers for a proton exchange membrane fuel cell system},
  International Journal of Hydrogen Energy, 34 (2009), pp.~2705--2717.

\bibitem{ZKL85}
{\sc J.~Zhang, N.-H. Kim, and L.~Lasdon}, {\em An improved successive linear
  programming algorithm}, Management Science, 31 (1985), pp.~1312--1331.

\end{thebibliography}
\end{document}